\documentclass[a4paper, 11pt, oneside, reqno]{amsart}

\usepackage[T1]{fontenc}
\usepackage[utf8]{inputenc} 
\usepackage{amsmath}
\usepackage{amssymb}
\usepackage[DIV=10]{typearea}
\usepackage{amsthm}
\usepackage{hyperref}
\usepackage{xcolor}
\newtheorem{theorem}{Theorem}[section]
\newtheorem{proposition}[theorem]{Proposition}
\newtheorem{corollary}[theorem]{Corollary}
\newtheorem{lemma}[theorem]{Lemma}
\theoremstyle{definition}
\newtheorem{definition}[theorem]{Definition}
\newtheorem{remark}[theorem]{Remark}

\newcommand{\N}{{\mathbb{N}}}

\newcommand{\T}{{\mathbb{T}}}
\newcommand{\C}{{\mathbb{C}}}

\newcommand{\CA}{{\mathcal{A}}}

\newcommand{\CF}{{\mathcal{F}}}
\newcommand{\CH}{{\mathcal{H}}}
\newcommand{\CS}{{\mathcal{S}}}
\newcommand{\CI}{{\mathcal{I}}}
\newcommand{\CJ}{{\mathcal{J}}}
\newcommand{\CK}{{\mathcal{K}}}
\newcommand{\CL}{{\mathcal{L}}}

\newcommand{\CW}{{\mathcal{W}}}
\newcommand{\CT}{{\mathcal{T}}}

\newcommand{\CB}{{\mathcal{B}}}
\newcommand{\CO}{{\mathcal{O}}}

\newcommand{\CR}{{\mathcal{R}}}
\newcommand{\CX}{{\mathcal{X}}}

\newcommand{\WCB}{{\widehat{\mathcal{B}}}}

\newcommand{\reg}{{\operatorname{reg}}}

\newcommand{\af}{\alpha}
\newcommand{\bt}{\beta}
\newcommand{\gm}{\gamma}

\makeatletter
\def\l@subsection{\@tocline{2}{0pt}{2pc}{5pc}{}}
\makeatother

\title[A Cuntz--Krieger Uniqueness Theorem for $C^*$-algebras of RGBDS]{A Cuntz--Krieger Uniqueness Theorem for $C^*$-algebras of relative generalized Boolean dynamical systems}

\author[T. M. Carlsen]{Toke Meier Carlsen}
\address{Department of Science and Technology, University of the Faroe Islands, Vestara Bryggja 15, FO-100 Tórshavn, The Faroe Islands} \email{toke.carlsen\-@\-gmail.\-com}

\author[E. J. Kang]{Eun Ji Kang}
\address{Research Institute of Mathematics, Seoul National University, Seoul 08826, Korea} \email{kkang3333\-@\-gmail.\-com }

\subjclass[2000]{37B40, 46L05, 46L55}
\keywords{Generalized Boolean Dynamical Systems, Partially defined topological graph, Cuntz-Krieger uniqueness theorem, Simple $C^*$-algebra.}
\subjclass[2000]{46L05, 46L55}
\date{\today}

\begin{document}

\begin{abstract} 
We prove a version of the Cuntz--Krieger Uniqueness Theorem for $C^*$-algebras of arbitrary relative generalized Boolean dynamical systems. We then describe properties of a  $C^*$-algebra of a relative generalized Boolean dynamical system when the underlying Boolean dynamical system satisfies Condition (K). 
We also define a notion of  minimality of a Boolean dynamical system and give sufficient and necessary conditions for the minimality.
 Using these results,  we characterize the generalized Boolean dynamical systems who's $C^*$-algebra  is simple. 
 \end{abstract}

\maketitle

\section{Introduction}
In \cite{CK1980}, Cuntz and Krieger constructed a $C^*$-algebra $\CO_A$ 
generated by $n$ partial isometries 
satisfying certain algebraic conditions arising from
an $n\times n$-matrix $A$ with entries in $\{0,1\}$,  and they proved the uniqueness theorem of $\CO_A$ \cite[Theorem 2.13]{CK1980}. This results says that if the matrix 
 $A$ satisfies a fullness condition (I), then any two families of non-zero partial isometries satisfying the above-mentioned algebraic conditions generate isomorphic $C^*$-algebras. The theorem is now known as the \emph{Cuntz--Krieger uniqueness theorem}. It is fundamental for the theory of Cuntz--Krieger algebras (as the algebras $\CO_A$ are now called) as it was used to prove a simplicity result for Cuntz--Krieger algebras \cite[Theorem 2.14]{CK1980} and a description of the primitive ideal space of $\CO_A$ \cite[Theorem 4.7]{HuRa1997}.

When studying a new class of $C^*$-algebras that contains the class of Cuntz--Krieger algebras, it is therefore one of the main topics to prove a result that extend the above-mentioned Cuntz--Krieger uniqueness theorem to every $C^*$-algebra in the new class. For example, graph algebras, topological graph algebras, higher rank graph algebras, labeled graph $C^*$-algebras and $C^*$-algebras of Boolean dynamical systems are generalizations of Cuntz--Krieger algebras, and generalizations of the Cuntz--Krieger uniqueness theorem have been proven for these classes of algebras (\cite[Corollary 2.12]{DrTo2005}, \cite[Theorem 5.12]{Ka2004}, \cite[Corollary 4.6]{RaSiYe2004}, \cite[Theorem 5.5]{BaPa2009}, \cite[Theorem 9.9]{COP}).

Recalling specifically  the case of $C^*$-algebras of Boolean dynamical systems,  if a Boolean dynamical system $(\CB, \CL,\theta)$ such that $\CB$ and $\CL$ are countable satisfies Condition (L), then any two Cuntz--Krieger representations consisting of nonzero partial isometrics generate isomorphic $C^*$-algebras (\cite[Theorem 9.9]{COP}).

A relative generalized Boolean dynamical system $(\CB, \CL,\theta, \CI_\af; \CJ)$ consists of a Boolean dynamical system $(\CB, \CL,\theta)$ together with a family $(\CI_\af)_{\af \in \CL}$  of ideals in $\CB$  such that $\theta_\af(\CB) \subseteq \CI_\af$ for all $\af \in \CL$ and an ideal $\CJ$ of $\CB_{reg}$.  As the $C^*$-algebra $C^*(\CB, \CL,\theta, \CI_\af; \CJ)$ associated to a relative generalized Boolean dynamical system  is  one of  generalizations of Cuntz-Krieger algebras, this class of $C^*$-algebras contains 
the $C^*$-algebras of generalized Boolean dynamical systems, the  $C^*$-algebras associated with labeled spaces, the $C^*$-algebras of Boolean dynamical systems, the $C^*$-algebras of ultragraphs, the $C^*$-algebras of shift spaces, and graph algebras.
This class of $C^*$-algebras is not a new  as it is shown in \cite{CaK2} that the class of  $C^*$-algebras of relative generalized Boolean dynamical systems is the  same as the class of $C^*$-algebras of  generalized Boolean dynamical systems, but it is very useful to describe the quotient of the $C^*$-algebra of a generalized Boolean dynamical system by a gauge-invariant ideal.

In this paper, we generalize the Cuntz--Krieger uniqueness theorem \cite[Theorem 9.9]{COP} to the $C^*$-algebra of an arbitrary relative  generalized Boolean dynamical system. 
To do that, we first prove a Cuntz--Krieger uniqueness theorem (Theorem~\ref{CKUT for GBDS}) for the $C^*$-algebra of a generalized Boolean dynamical system using a partially defined topological graph  associated to a generalized Boolean dynamical system. Then, we 
 prove a version of the Cuntz--Krieger uniqueness theorem (Theorem~\ref{CKUT for RGBDS}) for the $C^*$-algebra of a relative generalized Boolean dynamical systems using the fact that 
 the class of  $C^*$-algebras of relative generalized Boolean dynamical systems is the  same with the class of $C^*$-algebras of  generalized Boolean dynamical systems. 
We believe that our results are worthwhile in the aspect that we prove a Cuntz--Krieger uniqueness theorem without assumption that $\CB$ and $\CL$ are countable and also  we show that Condition (L) is a necessary condition to apply the Cuntz--Krieger uniqueness theorem.

Secondly, we deal with properties of a $C^*$-algebra of a relative generalized Boolean dynamical system when the underlying Boolean dynamical system satisfies Condition (K).
Generalizing \cite[Theorem 6.3 and Theorem 8.1]{CaK1}, we  prove that 
$(\CB, \CL,\theta)$ satisfies Condition (K) if and only if every ideal of  $C^*(\CB, \CL,\theta, \CI_\af;\CJ)$  is gauge-invariant if and only if $C^*(\CB, \CL,\theta, \CI_\af;\CJ)$  has the (weak) ideal property, and if and only if  $C^*(\CB, \CL,\theta, \CI_\af;\CJ)$ has topological dimension zero.
In \cite[Theorem 6.3 and Theorem 8.1]{CaK1}, this results were proved for a locally finite Boolean dynamical system $(\CB,\CL,\theta)$ with countable $\CB$ and $\CL$. 
The local finiteness condition was needed to use the characterization (\cite[Proposition 10.11]{COP}) of a gauge-invariant ideal  of the $C^*$-algebra of a Boolean dynamical systems  for which each action has compact range and closed domain, and the countability of $\CB$ and $\CL$ was needed to use the Cuntz--Krieger uniqueness theorem for $C^*(\CB,\CL,\theta)$ (\cite[Theorem 9.9]{COP}). 
A characterization of  the gauge-invariant ideals of $C^*(\CB, \CL,\theta, \CI_\af;\CJ)$ is given in \cite[Proposition 7.3]{CaK2}.
 Together with this, the uniqueness theorem proved in subsection \hyperref[CKUT for RGBDS]{3.2} will be  devoted to prove that if  $(\CB, \CL,\theta)$ satisfies Condition (K), then every ideal of  $C^*(\CB, \CL,\theta, \CI_\af;\CJ)$ is gauge-invariant. As a virtue of this,  a generalization of \cite[Theorem 6.3 and Theorem 8.1]{CaK1} will be  given without any conditions.

The third goal of the present paper is to characterize simplicity of the $C^*$-algebras of generalized Boolean dynamical systems. 
In \cite[Theorem 9.16]{COP}, the authors  characterize simplicity for a $C^*$-algebra associated with a Boolean dynamical system $(\CB, \CL,\theta)$ under the assumption that $\CB$ and $\CL$ are countable.
 Also, in \cite[Theorem 3.6]{CW2020}, a characterization of simplicity for a $C^*$-algebra associated with weakly left resolving normal labeled space is given under some countability condition.  In both papers, they realize their $C^*$-algebra as a locally compact Hausdorff \'etale groupoid $C^*$-algebra. The countability condition makes their groupoid $C^*$-algebra second countable. Then they apply the simplicity result of \cite[Theorem 5.1]{BCFA2014}. 
 We in this paper give necessary and sufficient conditions for the simplicity of  $C^*(\CB, \CL,\theta, \CI_\af)$  without any countability conditions, which generalizes both \cite[Theorem 9.16]{COP} and \cite[Theorem 3.6]{CW2020}. 
 The directness of its proof is one of the advantage of our result. Another advantage is that we  give a new characterization of 
   the simplicity of  $C^*(\CB, \CL,\theta, \CI_\af)$  in terms of maximal tails. 
 
 This paper is organized as follows. 
   Section \hyperref[preliminary]{2} contains necessary background on relative generalized Boolean dynamical systems,   partially defined topological graphs and their $C^*$-algebras.   
  In Section \hyperref[CKUT for GBDS]{3.1}, we review the way to define a partially defined  topological graph from a generalized Boolean dynamical system, and define an isomorphism between the $C^*$-algebra of the partially defined  topological graph and the $C^*$-algebra associated to  the generalized Boolean dynamical system (Proposition \ref{isom}). Also, we prove that 
 the Condition (L) of a generalized Boolean dynamical system is equivalent to the topological freeness of the associated partially defined topological graph (Proposition \ref{(L) equiv topologically free}). 
 We then apply these results to prove our Cuntz--Krieger uniqueness theorem. 
  In Section \hyperref[CKUT for RGBDS]{3.2}, we recall that for a relative generalized Boolean dynamical system $(\CB, \CL,\theta, \CI_\af; \CJ)$, there is a generalized Boolean dynamical system $(\CB', \CL,\theta_\af', \CI_\af')$ such  that $C^*(\CB, \CL,\theta, \CI_\af; \CJ)$ and $C^*(\CB', \CL,\theta', \CI_\af')$ are isomorphic, and show that $(\CB, \CL,\theta)$ satisfies Condition (L) if and only if $(\CB', \CL,\theta')$ satisfies Condition (L).  Then we apply the Cuntz--Krieger uniqueness theorem  of $C^*(\CB', \CL,\theta', \CI_\af')$ to have our uniqueness theorem. 
   In Section  \hyperref[Condition (K)]{4}, we state  equivalent conditions for a  $C^*$-algebra of a relative generalized Boolean dynamical system that satisfies Condition (K).
  In Section \hyperref[Minimality and simplicity]{5}, we define a  minimality of a Boolean dynamical system and give a number of equivalent conditions to a Boolean dynamical system being minimal. 
  We then characterize the generalized  Boolean dynamical systems which have simple $C^*$-algebras.

\section{Preliminaries}\label{preliminary} 
We will in this section recall some notation and terminology from \cite{CaK1} and \cite{CaK2}. 
We let $\N_0$ denote the set of nonnegative integers, $\N$ denote the set of positive integers, and let $\T=\{z\in\C:|z|=1\}$. 

\subsection{Boolean algebras}
A {\em Boolean algebra} is a relatively complemented distributive lattice $(\CB,\cap,\cup)$ with least element $\emptyset$. (A Boolean algebra is often called a {\em generalized Boolean algebra}.) If $\CB$ is a Boolean algebra, one can define a binary operation $\setminus: \CB \times \CB \rightarrow \CB$ such that 
$A \cap (B \setminus A)=\emptyset$, $A \cup (B \setminus A)= A \cup B$ for $A, B \in \CB$. Given $A, B \in \CB$, $A \cup B$ is called the {\em union} of $A$ and $B$, $A\cap B$ is called the \emph{intersection} of $A$ and $B$, and $B \setminus A$ is called the \emph{relative complement} of $A$ relative to  $B$. 
A Boolean algebra $\CB$ is called {\em unital} if it has a greatest element $1$, namely there exists $1 \in \CB$ such that $1 \cup A = 1$ and $1 \cap A=A$ for all $A \in \CB$.
(Often, Boolean algebras are assumed to be unital, but, we in this paper do not assume that $\CB$ is unital.)
A partial order $\subseteq$ on $\CB$ is the relation $A \subseteq B \iff  A \cap B =A$ for $A, B \in \CB$.  We say $A$ is  a \emph{subset} of $B$ if $A \subseteq B$. 
Note that $A\cup B$ and $A\cap B$ are the least upper-bound and the greatest lower-bound of $A$ and $B$ with respect to the partial order $\subseteq$. 

A non-empty subset $\CI$ of $\CB$ is called an {\em ideal} if 
 $A \cup B \in \CI$ whenever $A, B \in \CI$, and $\CI$ is lower closed, that is, 
if $A \in \CI$ and $B \subseteq A$, then $B \in \CI$.
For $A \in \CB$, we define $\CI_A:=\{ B \in \CB : B \subseteq A\}$, that is the ideal generated by $A$.

Let $\CI$ be an ideal of $\CB$. For $A, B \in \CB$, we define an equivalent relation by 
\begin{equation*}
A \sim B \iff A \cup A' = B \cup B'~\text{for some}~ A', B' \in \CI.
\end{equation*}
We denote by  $[A]_\CI$ the equivalent class of $A \in \CB$ under $\sim$.
If there is no confusion, we just write $[A]$ instead of $[A]_\CI$. 
The set of all equivalent classes of $\CB$ is denoted by $\CB / \CI$. 
 Then,  $\CB / \CI$ is a Boolean algebra with operations defined by 
\[
[A]\cap [B]=[A\cap B],\ [A]\cup [B]=[A\cup B] ~\text{and}~ [A]\setminus [B]=[A\setminus B].
\]

A non-empty subset $\eta \subseteq \CB$ is called a {\em filter} if 
$\emptyset \notin \eta$, $A \cap B \in \eta$ whenever $A , B \in \eta$ and $\xi$ is upper closed, that is,  if $ A \in \eta$ and $A \subseteq B$, then $B \in \eta$.
A filter is an {\em ultrafilter} if it is a maximal element in the set of filters with respect to inclusion of filter.
For a filter $\xi \subseteq \CB$, $\xi$ is an ultrafilter if and only if it is prime, that is, if  $B,B' \in \CB$ with $B \cup B' \in \eta$, then either $B \in \eta$ or $B' \in \eta$.
 We denote by $\widehat{\CB}$  the set of all ultrafilters of $\CB$. 
  For $A\in\CB$, we let $Z(A):=\{\xi\in\widehat{\CB}:A\in\xi\}$ and we equip $\widehat{\CB}$ with the topology generated by $\{Z(A): A\in\CB\}$. Then $\widehat{\CB}$ is a totally disconnected locally compact Hausdorff space such that each $Z(A)$ is compact and open.


\subsection{Relative generalized Boolean dynamical systems}

 A map $\phi: \CB \rightarrow \CB'$ between two Boolean algebras $\CB$ and $\CB'$ is called a {\em Boolean homomorphism} if $$\phi(A \cap B)=\phi(A) \cap \phi(B), \phi(A \cup B)=\phi(A) \cup \phi(B) ~\text{and}~\phi(A \setminus B)=\phi(A) \setminus \phi(B)$$ for all $A,B \in \CB$. A map $\theta: \CB \rightarrow \CB $ is called an {\em action} on  $\CB$ if it is a Boolean homomorphism with $\theta(\emptyset)=\emptyset$.

Let $\CL$ be a set. We define $\CL^0:=\{\emptyset \}$, $\CL^n:=\{(\beta_1, \dots, \beta_n): \beta_i \in \CL\}$  for $n \in \mathbb{N}$,   and $\CL^*:=\cup_{n \in \N_0} \CL^n$.  
For $\beta=(\beta_1, \dots, \beta_n) \in\CL^n$, we denote $|\bt|:=n$ and  write $\beta_1 \cdots \beta_n$ instead of $(\beta_1, \dots, \beta_n)$.  
Also, for $1\leq i\leq j\leq |\beta|$,  we denote by $\beta_{i,j}$ the sub-word $\beta_i\cdots \beta_j$ of  $\beta$, where $\beta_{i,i}=\beta_i$.
For $\beta=\beta_1 \cdots \beta_n$, $\gamma=\gamma_1 \cdots \gamma_m \in \CL^* \setminus \{\emptyset\}$, we denote by $\bt\gm$ the word $\beta_1 \cdots \beta_n\gamma_1\cdots\gamma_m$. If $\beta=\emptyset$, then $\beta\gamma:=\gamma$, and if $\gamma=\emptyset$, then $\beta\gamma:=\beta$. 
For $k\in\N$, we let $\beta^k:=\beta\beta\cdots\beta$ where the concatenation on the right has $k$ terms, and let $\beta^0:=\emptyset$. 
By $\CL^\infty$ we mean the set of sequences with entries in $\CL$. If $x=(x_1,x_2,\dots)\in\CL^\infty$ and $n\in\N$, then we  let $x_{1,n}$ denote the word $x_1x_2\cdots x_n\in\CL^n$. We also let $x_{1,0}=\emptyset$.

We say that a triple $(\CB,\CL,\theta)$ 
is a {\em Boolean dynamical system} if $\CB$ is a Boolean algebra, $\CL$ is a set, and $\theta:=(\theta_\alpha)_{\alpha \in \CL}$ is a family of actions on $\CB$. 
If $(\CB,\CL,\theta)$ is a Boolean dynamical system and $\beta=\beta_1 \cdots \beta_n \in \CL^*\setminus\{\emptyset\}$, then we let $\theta_\beta: \CB \rightarrow \CB$ be the action defined by 
$	\theta_\beta:=\theta_{\beta_n} \circ \cdots \circ \theta_{\beta_1}$. 
We also let $\theta_\emptyset:=\text{Id}$. 
For $B \in \CB$, we define $
\Delta_B^{(\CB,\CL,\theta)}:=\{\alpha \in \CL:\theta_\alpha(B) \neq
\emptyset \}.
$
We will often just write $\Delta_B$ instead of
$\Delta_B^{(\CB,\CL,\theta)}$. We say that $A \in \CB$ is {\em regular} if for any $ \emptyset \neq B \in \CI_A$, we have $0 < |\Delta_B| < \infty$. 
We denote by $\CB_{\reg}$ the set of all regular sets. Note that $\emptyset \in \CB_{reg}$ and $\CB_\reg$ is an ideal of $\CB$.

\begin{definition}
A {\em generalized Boolean dynamical system}  (\cite[Dfinition 3.2]{CaK2}) is a quadruple $(\CB,\CL,\theta,\CI_\alpha)$ where $(\CB,\CL,\theta)$ is a Boolean dynamical system and $\{\CI_\alpha\}_{\alpha\in\CL}$ is a family of ideals in $\CB$ such that $\CR_\alpha\subseteq\CI_\alpha$ for each $\alpha\in\CL$, where 
\[
\mathcal{R}_\alpha:=\{A\in\mathcal{B}:A\subseteq\theta_\alpha(B)\text{ for some }B\in\mathcal{B}\}.
\] A {\em  relative generalized Boolean dynamical system} is a pentamerous $(\CB,\CL,\theta,\CI_\alpha;\CJ)$ where $(\CB,\CL,\theta,\CI_\alpha)$ is a generalized Boolean dynamical system and $\CJ$ is an ideal of $\CB_{\reg}$. 
 A {\em relative Boolean dynamical system} is a quadruple $(\CB,\CL,\theta;\CJ)$ where $(\CB,\CL,\theta)$ is a Boolean dynamical system and $\CJ$ is an ideal of $\CB_{\reg}$.
\end{definition}

\subsection{Saturated hereditary ideals and quotient Boolean dynamical systems}
Suppose $(\CB,\CL,\theta)$ is a Boolean dynamical system. An ideal $\CH$ of $\CB$ is \emph{hereditary} if $\theta_\alpha(A)\in\CH$ whenever $A\in\CH$ and $\alpha\in\CL$, and \emph{saturated} if $A\in\CH$ whenever $A\in\CB_\reg$ and $\theta_\alpha(A)\in\CH$ for every $\alpha\in\Delta_A$. If $(\CB,\CL,\theta;\CJ)$ is a relative Boolean dynamical system, then an ideal $\CH$ of $\CB$ is \emph{$\CJ$-saturated} if $A\in\CH$ whenever $A\in\CJ$ and $\theta_\alpha(A)\in\CH$ for every $\alpha\in\Delta_A$.

Suppose that $(\CB,\CL,\theta;\CJ)$ is a relative Boolean dynamical system and $\CH$ is a  hereditary $\CJ$-saturated ideal of $\CB$. 
If  we define $\theta_{\af}([A]_\CH)=[\theta_{\af}(A)]_\CH$ for all $[A]_\CH \in \CB/\CH$ and $ \af \in \CL$, then  $(\CB / \CH, \CL,\theta)$ becomes a Boolean dynamical system.
 We let 
\[
	\CB_\CH:=\bigl\{A\in\CB:[A]_\CH\in (\CB/\CH)_\reg\bigr\}
\]
(notice that there is a mistake in the definition of $\CB_\CH$ given on Page 24 of \cite{CaK2}). Then $\CB_\CH$ is an ideal of $\CB$ and $\CH\cup\CJ\subseteq\CB_\CH$. If $\CS$ is an ideal of $\CB_\CH$ such that $\CH\cup\CJ\subseteq\CS$ and we let $[\CS]:=\{[A]_\CH:A\in\CS\}$, then $(\CB/\CH,\CL,\theta;[\CS])$ is a relative Boolean dynamical system. Moreover, if $(\CB,\CL,\theta,\CI_\alpha)$ is a generalized Boolean dynamical system and we for each $\alpha\in\CB$ let $[\CI_\alpha]:=\{[A]_\CH:A\in\CI_\alpha\}$, then $(\CB/\CH,\CL,\theta,[\CI_\alpha])$ is a generalized Boolean dynamical system and $(\CB/\CH,\CL,\theta,[\CI_\alpha];[\CS])$ is a relative generalized Boolean dynamical system.

\subsection{The $C^*$-algebra of a relative generalized Boolean dynamical system}
Let $(\CB,\CL,\theta, \CI_\alpha; \CJ)$ be a relative generalized Boolean dynamical system. A {\em  $(\CB, \CL, \theta, \CI_\alpha;\CJ)$-representation} (\cite[Definition 3.3]{CaK2}) consists of a family of projections $\{P_A:A\in\mathcal{B}\}$ and a family of partial isometries $\{S_{\alpha,B}:\alpha\in\mathcal{L},\ B\in\mathcal{I}_\alpha\}$ in a $C^*$-algebra such that for $A,A'\in\mathcal{B}$, $\alpha,\alpha'\in\mathcal{L}$, $B\in\mathcal{I}_\alpha$ and $B'\in\mathcal{I}_{\alpha'}$,
\begin{enumerate}
\item[(i)] $P_\emptyset=0$, $P_{A\cap A'}=P_AP_{A'}$, and $P_{A\cup A'}=P_A+P_{A'}-P_{A\cap A'}$;
\item[(ii)] $P_AS_{\alpha,B}=S_{\alpha,  B}P_{\theta_\alpha(A)}$;
\item[(iii)] $S_{\alpha,B}^*S_{\alpha',B'}=\delta_{\alpha,\alpha'}P_{B\cap B'}$;
\item[(iv)] $P_A=\sum_{\alpha \in\Delta_A}S_{\alpha,\theta_\alpha(A)}S_{\alpha,\theta_\alpha(A)}^*$ for all  $A\in \mathcal{J}$. 
\end{enumerate}
The {\it  $C^*$-algebra of $(\CB,\CL,\theta,\CI_\alpha;\CJ)$}, which we denote by $C^*(\mathcal{B},\mathcal{L},\theta, \CI_\af;\mathcal{J})$, is defined to be the $C^*$-algebra generated by a universal $(\CB, \CL, \theta, \CI_\af;\CJ)$-representation. 
 
A $(\CB, \CL, \theta,\CI_\af;\CB_{reg})$-representation is called a {\it 
$(\CB, \CL, \theta,\CI_\af)$-representation}. 
We write $C^*(\CB, \CL, \theta, \CI_\af)$ for $C^*(\CB, \CL, \theta,\CI_\af;
\CB_{reg})$ and call it the {\it  $C^*$-algebra of
$(\CB,\CL,\theta,\CI_\alpha)$}.


Let $(\CB,\CL,\theta, \CI_\alpha; \CJ)$ be a relative generalized Boolean dynamical system. 
 By the universal property of $C^*(\CB,\CL,\theta,\CI_\alpha;\CJ)=C^*(p_A, s_{\alpha,B})$, there is a strongly continuous action $\gamma:\mathbb T\to {\rm Aut}(C^*(\CB,\CL,\theta, \CI_\alpha;\CJ))$, which we call the {\it gauge action}, such that
\[
\gamma_z(p_A)=p_A   \ \text{ and } \ \gamma_z(s_{\alpha,B})=zs_{\alpha,B}
\]
for $A\in \CB$, $\alpha \in \CL$ and $B \in \CI_\alpha$. We say that an ideal $I$ of $C^*(\CB, \CL, \theta, \CI_\alpha;\mathcal{J})$ is \emph{gauge-invariant} if $\gamma_z(I)=I$ for every $z\in\T$.

For $\af=\af_1\af_2 \cdots \af_n \in \CL^* \setminus \{\emptyset\}$, we define
\[
\CI_\af:=\{A \in \CB : A \subseteq \theta_{\af_2 \cdots \af_n}(B)~\text{for some}~ B \in \CI_{\af_1}\}.
\]
For $\beta =\emptyset$, we let $\CI_\emptyset := \CB$. If $\{P_A,\ S_{\alpha,B}: A\in\CB,\ \alpha\in\CL,\ B\in\CI_\alpha\}$ be a $(\CB,\CL,\theta,\CI_\alpha;\CJ)$-representation, we define for
 $\af=\af_1\af_2 \cdots \af_n \in \CL^* \setminus \{\emptyset\}$ and $A \in \CI_{\af}$, \[
S_{\af,A}:=S_{\af_1,B}S_{\af_2, \theta_{\af_2}(B)}S_{\af_3, \theta_{\af_2\af_3}(B)} \cdots S_{\af_n,A},
\] 
where $B \in \CI_{\af_1}$ is such that $A \subseteq \theta_{\af_2 \cdots \af_n}(B)$. For $\af = \emptyset$, we also define $S_{\emptyset, A}:=P_A$.
It then is known that
$
C^*(P_A,S_{\alpha,B})=\overline{{\rm \operatorname{span}}}\{S_{\af,A}S_{\bt,A}^*: \af,\bt
\in \CL^* ~\text{and}~ A \in \CI_\af\cap \CI_\bt \}$ (see \cite[Remark 3.11]{CaK2}).

\subsection{Gauge-invariant ideals}
If $(\CB, \CL, \theta, \CI_\alpha;\mathcal{J})$ is a relative generalized Boolean dynamical system, $\CH$ is a  hereditary $\CJ$-saturated ideal of $\CB$, and $\CS$ is an ideal of $\CB_\CH$ such that $\CH\cup\CJ\subseteq\CS$, then we let $I_{(\CH,\CS)}$ be the ideal of $C^*(\mathcal{B},\mathcal{L},\theta, \CI_\alpha;\mathcal{J})$ generated by 
\[
	\biggl\{p_A-\sum_{\alpha\in\Delta_{[A]_\CH}}s_{\alpha,\theta_\alpha(A)}s_{\alpha,\theta_\alpha(A)}^*:A\in\CS\biggr\}.
\]
If $I$ is an ideal of $C^*(\mathcal{B},\mathcal{L},\theta, \CI_\alpha;\mathcal{J})$, then we let
\[
	\CH_I:=\{A\in\CB:p_A\in I\}
\]
and
\[
	\CS_I:=\biggl\{A\in\CB_{\CH_I}:p_A-\sum_{\alpha\in\Delta_{[A]_{\CH_I}}}s_{\alpha,\theta_\alpha(A)}s_{\alpha,\theta_\alpha(A)}^*\in I\biggr\}.
\]
Then $\CH_I$ is a  hereditary $\CJ$-saturated ideal of $\CB$, $\CS_I$ is an ideal of $\CB_{\CH_I}$ such that $\CH_I\cup\CJ\subseteq\CS_I$, $I_{(\CH_I,\CS_I)}\subseteq I$, and $I_{(\CH_I,\CS_I)}=I$ if and only if $I$ is gauge-invariant. Moreover, the map $(\CH,\CS)\mapsto I_{(\CH,\CS)}$ is a lattice isomorphism between the lattice of pairs $(\CH,\CS)$ where $\CH$ is a  hereditary $\CJ$-saturated ideal of $\CB$ and $\CS$ is an ideal of $\CB_\CH$ such that $\CH\cup\CJ\subseteq\CS$, with order given by $(\CH_1,\CS_1)\subseteq (\CH_2,\CS_2)\iff \CH_1\subseteq \CH_2\text{ and }\CS_1\subseteq\CS_2$, and the lattice of gauge-invariant ideals of $C^*(\mathcal{B},\mathcal{L},\theta, \CI_\alpha;\mathcal{J})$, and there is for each pair $(\CH,\CS)$ an isomorphism $\phi:C^*(\CB/\CH,\CL,\theta,[\CI_\alpha];[\CS])\to C^*(\mathcal{B},\mathcal{L},\theta, \CI_\alpha;\mathcal{J})/I_{(\CH,\CS)}$ such that $\phi(p_{[A]})=p_A+I_{(\CH,\CS)}$ for $A\in\CB$, and $\phi(s_{\alpha,[B]})=s_{\alpha,B}+I_{(\CH,\CS)}$ for $\alpha\in\CL$ and $B\in\CI_\alpha$.

\subsection{Condition (L)}

Let $(\CB, \CL,\theta)$ be a Boolean dynamical system and let  $\bt=\bt_1 \cdots \bt_{n} \in \CL^*\setminus\{\emptyset
\}$ and $A \in \CB\setminus\{\emptyset\}$.
\begin{enumerate}
\item A pair $(\beta,A)$  is called a \emph{cycle} (\cite[Definition 9.5]{COP}) if $B=\theta_\beta(B)$ for $B\in\CI_A$. 
\item A cycle $(\bt,A)$ has an \emph{exit} (\cite{CaK1}) if  there is a $t \leq n$ and a $B\in\CB$ such that $\emptyset \neq B \subseteq \theta_{\bt_{1,t}}(A)$ and $\Delta_B\ne\{\bt_{t+1}\}$ (where $\bt_{n+1}:=\bt_1$).
\item A cycle $(\beta,A)$  has \emph{no exits} (\cite[Definition 9.5]{COP}) if for $t\in\{1,2,\dots,n\}$ and $ \emptyset \neq B\in\CI_{\theta_{\beta_{1,t}}(A)}$, we have $B \in \CB_{reg}$ with $\Delta_{B}=\{\beta_{t+1}\}$ (where  $\beta_{n+1}:=\beta_1$). 
\item   $(\CB,\CL,\theta)$ is said to satisfy \emph{Condition (L)} (\cite[Definition 9.5]{COP}) if it has no cycle with no exits.
\end{enumerate}

The following lemma will be used to prove Proposition \ref{(L) equiv topologically free}.

\begin{lemma} Let $(\CB,\CL,\theta)$ be a  Boolean dynamical system. If $(\bt,A)$ is a cycle with no exits, where $\bt=\bt_1 \cdots \bt_n \in \CL^* \setminus \{\emptyset\}$ and $A \in \CB \setminus \{\emptyset\}$, then $(\bt_{k+1,n}\bt_{1,k}, \theta_{\bt_{1,k}}(A))$ is a cycle for any $k \in \{1, \cdots, n\}$.
 \end{lemma}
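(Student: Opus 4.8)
The plan is to verify directly the defining property of a cycle for $(\beta',A')$, where throughout I write $\beta':=\beta_{k+1,n}\beta_{1,k}$ and $A':=\theta_{\beta_{1,k}}(A)$, so that $\theta_{\beta'}=\theta_{\beta_{1,k}}\circ\theta_{\beta_{k+1,n}}$ and $\theta_\beta=\theta_{\beta_{k+1,n}}\circ\theta_{\beta_{1,k}}$ by the definition of $\theta$ on words. Since $(\beta,A)$ is a cycle, $\theta_\beta$ fixes every element of $\CI_A$; in particular $\theta_\beta(A)=A\neq\emptyset$, and consequently $\theta_{\beta_{k+1,n}}(A')=\theta_\beta(A)=A$ and $A'\neq\emptyset$. (When $k=n$ we interpret $\beta_{k+1,n}$ as the empty word, $\theta_{\beta_{k+1,n}}=\operatorname{Id}$, and the assertion reduces to the hypothesis; the argument below applies verbatim, so I tacitly assume $k<n$.) Thus the goal is to show $\theta_{\beta'}(C)=C$ for every $C\in\CI_{A'}$.

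The first step is to reduce this to the injectivity of $\theta_{\beta_{k+1,n}}$ on the principal ideal $\CI_{A'}$. Given $C\in\CI_{A'}$, put $D:=\theta_{\beta_{k+1,n}}(C)$; since $C\subseteq A'$, monotonicity of Boolean homomorphisms gives $D\subseteq\theta_{\beta_{k+1,n}}(A')=A$, so $\theta_\beta(D)=D$. Setting $E:=\theta_{\beta_{1,k}}(D)$, which lies in $\CI_{A'}$ because $D\subseteq A$, one computes $\theta_{\beta_{k+1,n}}(E)=\theta_{\beta_{k+1,n}}(\theta_{\beta_{1,k}}(D))=\theta_\beta(D)=D=\theta_{\beta_{k+1,n}}(C)$. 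Hence, once $\theta_{\beta_{k+1,n}}$ is known to be injective on $\CI_{A'}$, we get $E=C$, i.e.\ $\theta_{\beta'}(C)=\theta_{\beta_{1,k}}(\theta_{\beta_{k+1,n}}(C))=\theta_{\beta_{1,k}}(D)=E=C$, as required.

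The second, and only substantial, step is to establish that injectivity, and this is where the no-exits hypothesis is used. Write $A^{(j)}:=\theta_{\beta_{1,j}}(A)$ for $0\le j\le n$, so $A^{(k)}=A'$, $A^{(n)}=A$, $A^{(j)}=\theta_{\beta_j}(A^{(j-1)})$, and $\theta_{\beta_{k+1,n}}=\theta_{\beta_n}\circ\cdots\circ\theta_{\beta_{k+1}}$, each factor $\theta_{\beta_j}$ mapping $\CI_{A^{(j-1)}}$ into $\CI_{A^{(j)}}$ by monotonicity. It therefore suffices to show that each $\theta_{\beta_j}$, $j=k+1,\dots,n$, is injective on $\CI_{A^{(j-1)}}$. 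Applying the no-exits condition with $t=j-1$ (valid since $1\le k\le j-1\le n-1$), every nonempty $B\in\CI_{A^{(j-1)}}$ satisfies $\Delta_B=\{\beta_j\}$, in particular $\theta_{\beta_j}(B)\neq\emptyset$. Then the usual trick finishes it: if $B,B'\in\CI_{A^{(j-1)}}$ have $\theta_{\beta_j}(B)=\theta_{\beta_j}(B')$, then $\theta_{\beta_j}(B\setminus B')=\theta_{\beta_j}(B)\setminus\theta_{\beta_j}(B')=\emptyset$ and likewise $\theta_{\beta_j}(B'\setminus B)=\emptyset$; since $B\setminus B'$ and $B'\setminus B$ again lie in $\CI_{A^{(j-1)}}$, they must be empty, so $B=B'$. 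Composing the injective maps $\theta_{\beta_{k+1}},\dots,\theta_{\beta_n}$ shows $\theta_{\beta_{k+1,n}}$ is injective on $\CI_{A'}$.

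I expect the main obstacle to be conceptual rather than computational: the key observation is that the no-exits condition forces $\theta_{\beta_j}(B)\neq\emptyset$ for every nonempty $B$ below $\theta_{\beta_{1,j-1}}(A)$, which is exactly what makes the actions along the cycle injective on the relevant principal ideals; granting that, the fact that any cyclic rotation of a no-exit cycle is again a cycle drops out of the identity $\theta_\beta|_{\CI_A}=\operatorname{Id}$. Everything else is routine bookkeeping in the generalized Boolean algebra.
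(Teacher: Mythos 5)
Your proof is correct and follows essentially the same route as the paper's: both use the cycle property of $(\beta,A)$ to get that $C$ and $\theta_{\beta_{k+1,n}\beta_{1,k}}(C)$ have the same image under $\theta_{\beta_{k+1,n}}$, and then use the no-exits condition to propagate nonemptiness of subsets of $\theta_{\beta_{1,k}}(A)$ through $\theta_{\beta_{k+1}},\dots,\theta_{\beta_n}$ to force equality. The only difference is organizational: you package that propagation as injectivity of $\theta_{\beta_{k+1,n}}$ on $\CI_{\theta_{\beta_{1,k}}(A)}$, whereas the paper runs the identical argument directly on the relative complements $B\setminus\theta_{\beta_{k+1,n}\beta_{1,k}}(B)$ and $\theta_{\beta_{k+1,n}\beta_{1,k}}(B)\setminus B$.
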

 
 \begin{proof} Let $k \in \{1, \cdots, n\}$. We prove that $B=\theta_{\bt_{k+1,n}\bt_{1,k}}(B)$ for all $B \subseteq \theta_{\bt_{1,k}}(A)$. Take $B \subseteq \theta_{\bt_{1,k}}(A)$. Since $B \subseteq \theta_{\bt_{1,k}}(A)$, we have $\theta_{\bt_{k+1,n}\bt_{1,k}}(B) \subseteq \theta_{\bt_{k+1,n}\bt_{1,k}}(\theta_{\bt_{1,k}}(A))$. Here, $\theta_{\bt_{k+1,n}\bt_{1,k}}(\theta_{\bt_{1,k}}(A))=\theta_{\bt_{1,k}\bt_{k+1,n}\bt_{1,k} }(A)=\theta_{\bt_{1,k}}(\theta_\bt(A))=\theta_{\bt_{1,k}}(A)$. So, we have $\theta_{\bt_{k+1,n}\bt_{1,k}}(B) \subseteq \theta_{\bt_{1,k}}(A)$. On the other hand, since $(\bt,A)$ is a cycle and $\theta_{\bt_{k+1,n}}(B) \subseteq A$, we have $$\theta_\bt(\theta_{\bt_{k+1,n}}(B))=\theta_{\bt_{k+1,n}}(B).$$ Here, $\theta_\bt(\theta_{\bt_{k+1,n}}(B))=\theta_{\bt_{k+1,n}\bt}(B)=\theta_{\bt_{k+1,n}\bt_{1,k}\bt_{k+1,n}}(B)=\theta_{\bt_{k+1,n}}(\theta_{\bt_{k+1,n}\bt_{1,k}}(B))$. So, 
 \begin{align}\label{same image}\theta_{\bt_{k+1,n}}(\theta_{\bt_{k+1,n}\bt_{1,k}}(B))=\theta_{\bt_{k+1,n}}(B).\end{align}
 
 If  $ B \setminus  \theta_{\bt_{k+1,n}\bt_{1,k}}(B) \neq \emptyset $, then $B \setminus \theta_{\bt_{k+1,n}\bt_{1,k}}(B) \in \CB_{reg}$ and $\Delta_{B \setminus \theta_{\bt_{k+1,n}\bt_{1,k}}(B)}=\{\bt_{k+1}\}$  since 
   $(\bt,A)$ is a cycle with no exits. 
 So, $\emptyset \neq \theta_{\bt_{k+1}}(B \setminus \theta_{\bt_{k+1,n}\bt_{1,k}}(B)) \subseteq \theta_{\bt_{1,k+1}}(A) $. Then again, since $(\bt,A)$ is a cycle with no exits, $\theta_{\bt_{k+1}}(B \setminus \theta_{\bt_{k+1,n}\bt_{1,k}}(B)) \in \CB_{reg}$ and $
 \Delta_{\theta_{\bt_{k+1}}(B \setminus \theta_{\bt_{k+1,n}\bt_{1,k}}(B))}=\{\bt_{k+2}\}$. Continuing this process, we have $\theta_{\bt_{k+1,n}}(B \setminus \theta_{\bt_{k+1,n}\bt_{1,k}}(B)) \neq \emptyset$. This contradicts to (\ref{same image}). Thus, $B \subseteq \theta_{\bt_{k+1,n}\bt_{1,k}}(B)$. If $\theta_{\bt_{k+1,n}\bt_{1,k}}(B) \setminus B \neq \emptyset$, the same arguments gives $\theta_{\bt_{k+1,n}}(\theta_{\bt_{k+1,n}\bt_{1,k}}(B) \setminus B) \neq \emptyset$, which contradicts to (\ref{same image}). Thus, $B=\theta_{\bt_{k+1,n}\bt_{1,k}}(B)$.
 \end{proof}

\subsection{Maximal tails}

A \emph{maximal tail} (\cite[Definition 4.1]{CaK1}) of a Boolean dynamical system $(\CB,\CL,\theta)$ is a non-empty subset $\CT$ of $\CB$ such that
\begin{enumerate}
	\item[(T1)] $\emptyset\notin\CT$;
	\item[(T2)] if $A\in\CB$ and $\theta_\alpha(A)\in\CT$ for some $\alpha\in\CL$, then $A\in\CT$;
	\item[(T3)] if $A\cup B\in\CT$, then $A\in\CT$ or $B\in\CT$;
	\item[(T4)] if $A\in\CT$, $B\in\CB$ and $A\subseteq B$, then $B\in\CT$;
	\item[(T5)] if $A\in\CT\cap\CB_\reg$, then there is an $\alpha\in\CL$ such that $\theta_\alpha(A)\in\CT$;
	\item[(T6)] if $A,B\in\CT$ then there are $\beta,\gamma\in\CL^*$ such that $\theta_\beta(A)\cap\theta_\gamma(B)\in\CT$.
\end{enumerate}

\begin{remark} A notion of maximal tail was first introduced in \cite[Definition 4.1]{CaK1}. The condition (T6) above is equivalent to (T5) in \cite[Definition 4.1]{CaK1}. 
\end{remark}

\begin{remark}\label{remark:maximal tail} If $\CT$ is a maximal tail, then $\CH_\CT:=\CB \setminus \CT$ is a hereditary $\CJ$-saturated ideal of $\CB$ for any ideal $\CJ$ of $\CB_{reg}$. 
\end{remark}

 An \emph{ultrafilter cycle} (\cite[Definition 3.1]{CaK1}) of a Boolean dynamical system $(\CB,\CL,\theta)$ is a pair $(\beta,\eta)$, where $\beta\in\CL^*\setminus\{\emptyset\}$ and $\eta\in\WCB$, such that $\theta_\beta(A)\in\eta$ for all $A\in\eta$.
A maximal tail is \emph{cyclic} (\cite[Definition 4.6]{CaK1}) if there is an ultrafilter cycle $(\beta,\eta)$ such that 
\[
	\CT=\{B\in\CB:\theta_\gamma(B)\in\eta\text{ for some }\gamma\in\CL^*\}
\] 
and an $A\in\eta$ such that if $\gamma\in\CL^*\setminus{\emptyset}$, $B\in\CI_A$ and $\theta_\gamma(B)\in\eta$, then $B\in\eta$ and $\gamma=\beta^k$ for some $k\in\N$.

 In \cite[Proposition 6.2]{CaK1}, the following result is 
 stated for Boolean dynamical systems that has compact range and closed domain (see \cite[Subsection 2.2]{CaK1}). However, the proof of \cite[Proposition 6.2]{CaK1}  works  without this assumption and once we replace  elements of the form $s_\mu p_{[C]}$ by $s_{\mu,[C]}$ in the proof of \cite[Proposition 6.2]{CaK1}, we can have the following. 
 For further reference, we record these results here and provide a proof of the parts that needed to be modified.

\begin{proposition}\label{cyclic maximal tails}
Let $(\CB, \CL, \theta, \CI_\af)$ be a generalized Boolean dynamical system. Suppose $(\CB, \CL, \theta)$ has a cyclic maximal tail $\CT$. Then $C^*(\CB/(\CB\setminus\CT),\CL,\theta, [\CI_\af])$ contains an ideal that is not gauge-invariant, and there is a $B\in\CT$ such that $p_{[B]}C^*(\CB/(\CB\setminus\CT),\CL,\theta, [\CI_\af])p_{[B]}$ is isomorphic to $M_n(C(\mathbb{T}))$ for some $n\in\mathbb{N}$, where  we let $[\CI_\alpha]:=\{[A]_{\CB\setminus\CT}:A\in\CI_\alpha\}$. 
\end{proposition}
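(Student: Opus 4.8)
The plan is to reprove \cite[Proposition 6.2]{CaK1}, whose argument carries over once the partial isometries appearing there as products $s_\mu p_{[C]}$ are replaced throughout by the elements $s_{\mu,[C]}$, and once the description of $C^*(\CB/\CH,\CL,\theta,[\CI_\af])$ as the closed linear span of the $s_{\af,[C]}s_{\bt,[C]}^*$ (\cite[Remark 3.11]{CaK2}) is used in place of its compact-range/closed-domain predecessor; the standing finiteness hypotheses of \cite{CaK1} are never used. Set $\CH:=\CB\setminus\CT$; by Remark~\ref{remark:maximal tail} it is a hereditary $\CJ$-saturated ideal, so $(\CB/\CH,\CL,\theta,[\CI_\af])$ is a generalized Boolean dynamical system and I will work inside $C^*(\CB/\CH,\CL,\theta,[\CI_\af])$ with its universal representation $\{p_{[A]},s_{\alpha,[B]}\}$. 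Since $\CT$ is cyclic, fix an ultrafilter cycle $(\beta,\eta)$ and the element $A\in\eta$ supplied by the definition, and put $n:=|\beta|$.

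The combinatorial input, all of it extracted from the ultrafilter cycle $(\beta,\eta)$ together with the clause ``$\theta_\gamma(B)\in\eta$ with $\emptyset\ne B\subseteq A$ forces $B\in\eta$ and $\gamma=\beta^k$'', is threefold. First, for $0\le k\le n-1$ and $\emptyset\ne[B]\subseteq[\theta_{\beta_{1,k}}(A)]$ in $\CB/\CH$ one has $\Delta_{[B]}^{(\CB/\CH,\CL,\theta)}=\{\beta_{k+1}\}$ (indices read mod $n$): pushing $\theta_\gamma(B)\in\eta$ back down to $A$ and invoking the clause pins the follower letter, and the reverse inclusion follows by exhibiting a witnessing word beginning with $\beta_{k+1}$; in particular each $[\theta_{\beta_{1,k}}(A)]$ is nonzero and lies in $(\CB/\CH)_\reg$. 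Second, $[\theta_\beta(A)]=[A]$, because $A$, $\theta_\beta(A)$ and all iterates $\theta_{\beta^j}(A)$ lie in $\eta$, so that $A\setminus\theta_\beta(A)\in\CT$ or $\theta_\beta(A)\setminus A\in\CT$ would put $\emptyset$ into $\eta$. Third, for $0\le i<j\le n-1$ the classes $[\theta_{\beta_{1,i}}(A)]$ and $[\theta_{\beta_{1,j}}(A)]$ are disjoint in $\CB/\CH$: a nonzero common lower bound $C\in\CT$ gives $\theta_\gamma(C)\in\eta$ for some $\gamma$, whence $\theta_{\beta_{1,i}\gamma}(A)\in\eta$ and $\theta_{\beta_{1,j}\gamma}(A)\in\eta$, and the clause (together with the trivial case $\gamma=\emptyset$) makes both $\beta_{1,i}\gamma$ and $\beta_{1,j}\gamma$ powers of $\beta$, forcing $j-i$ to be a nonzero multiple of $n$, which is absurd. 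Hence $q_k:=p_{[\theta_{\beta_{1,k}}(A)]}$ ($k=0,\dots,n-1$) are nonzero, pairwise orthogonal projections, $q_n:=p_{[\theta_\beta(A)]}=q_0$, and $P:=\sum_{k=0}^{n-1}q_k=p_{[B]}$ where $B:=\bigcup_{k=0}^{n-1}\theta_{\beta_{1,k}}(A)\in\CT$ (using (T4)).

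Now I would assemble the corner: with $w_k:=s_{\beta_k,[\theta_{\beta_{1,k}}(A)]}$, relation (iii) of the representation gives $w_k^*w_k=q_k$, and since $[\theta_{\beta_{1,k-1}}(A)]\in(\CB/\CH)_\reg$ with $\Delta_{[\theta_{\beta_{1,k-1}}(A)]}=\{\beta_k\}$, relation (iv) gives $w_kw_k^*=q_{k-1}$ (indices mod $n$, so $q_n=q_0$). Thus $v:=w_1w_2\cdots w_n$ is a partial isometry with $v^*v=vv^*=q_0$; the $q_k$ and $w_k$ then form a system of matrix units, and together with $v$ they yield a unital $*$-homomorphism $\varphi\colon M_n(C(\T))\to p_{[B]}C^*(\CB/\CH,\CL,\theta,[\CI_\af])p_{[B]}$ under which the copy of $C(\T)$ in a corner of $M_n(C(\T))$ maps onto $C^*(v)\subseteq q_0C^*(\CB/\CH,\CL,\theta,[\CI_\af])q_0$. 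Since $\gamma_z$ fixes each $q_k$ and sends $v\mapsto z^nv$, the spectrum of the unitary $v$ in $q_0C^*(\CB/\CH,\CL,\theta,[\CI_\af])q_0$ is rotation-invariant, hence equals $\T$, and as $q_0\ne0$ this makes $\varphi$ injective. For surjectivity I would compress the spanning elements: a nonzero $p_{[B]}s_{\af,[C]}s_{\bt,[C]}^*p_{[B]}$ forces, by the follower-letter fact above, $\af$ and $\bt$ to be initial segments of powers of $\beta$ modulo the equivalence of $\CB/\CH$, and such an element is then one of the matrix units multiplied by a power of $v$ or $v^*$; hence $\varphi$ is onto and $p_{[B]}C^*(\CB/\CH,\CL,\theta,[\CI_\af])p_{[B]}\cong M_n(C(\T))$, which is the second assertion.

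Finally, $M_n(C(\T))\cong p_{[B]}C^*(\CB/\CH,\CL,\theta,[\CI_\af])p_{[B]}$ has proper nonzero ideals $I_0$ — its ideals are parametrized by the closed subsets of $\operatorname{sp}(v)=\T$, and for $z$ with $z^n\ne1$ the restricted gauge action does not preserve the one attached to a single point, since it rotates $\operatorname{sp}(v)$ by $z^n$. Since $p_{[B]}$ is gauge-invariant and, for a hereditary $C^*$-subalgebra $D$ of a $C^*$-algebra $\CA$, the map sending an ideal $I_0$ of $D$ to the ideal $\overline{\CA I_0\CA}$ of $\CA$ satisfies $\overline{\CA I_0\CA}\cap D=I_0$ and commutes with the gauge action, such an $I_0$ produces an ideal of $C^*(\CB/\CH,\CL,\theta,[\CI_\af])$ that is not gauge-invariant, which is the first assertion. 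The step I expect to be the main obstacle is the surjectivity of $\varphi$: ruling out every extraneous $s_{\af,[C]}s_{\bt,[C]}^*$ under compression by $p_{[B]}$ is precisely where the full strength of the return-word clause defining a cyclic maximal tail is needed, with the bookkeeping behind the three combinatorial facts the other point requiring care.
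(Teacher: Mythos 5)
Your proposal is correct and takes essentially the same route as the paper: pass to the quotient by $\CB\setminus\CT$, show that $(\beta,[A])$ is a cycle without exit whose translates $[\theta_{\beta_{1,k}}(A)]$ are pairwise disjoint, compress by $p_{[B]}$ with $B=\bigcup_k\theta_{\beta_{1,k}}(A)$ to identify the corner with $M_n(C(\T))$, and pull back a non-rotation-invariant ideal of the corner to a non-gauge-invariant ideal. The only difference is one of presentation: you re-derive inline the combinatorial facts and the $M_n(C(\T))$ identification that the paper imports by citing \cite[Lemma 6.1 and Proposition 6.2]{CaK1}.
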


\begin{proof}
Choose a cyclic maximal tail $\mathcal T$ in $(\CB,\CL,\theta)$. Then there is an ultrafilter cycle $(\alpha,\eta)$ such that $\mathcal{T}=\{B\in\mathcal{B}:\theta_\beta(B)\in\eta\text{ for some }\beta\in\mathcal{L}^*\}$ and an $A\in\eta$ such that if $\beta\in\mathcal{L}^*\setminus\{\emptyset\}$, $B\in\mathcal{I}_A$ and $\theta_\beta(B)\in\eta$, then $B\in\eta$ and $\beta=\alpha^k$ for some $k\in\mathbb{N}$.
One then can see that  $\CB\setminus\CT$ is a hereditary saturated ideal of $\CB$ and that a minimal set $[A]$ admits a cycle $\af$  with no exit in $(\CB/(\CB\setminus\CT), \CL,\theta)$. We also have by \cite[Lemma 6.1]{CaK1} that 
 \begin{align}\label{simple cycle}[\theta_{\af_{1,i}}(A)] \cap [\theta_{\af_{1,j}}(A)]= \emptyset ~~\text{for all}~~  1 \leq i < j \leq n.
\end{align}
Put $B:=\cup_{k=1}^n \theta_{\af_{1,k}}(A)$ with $n=|\alpha|$. Then, for $s_{\mu, [C]}s_{\nu, [C]}^* \in C^*(\CB/(\CB\setminus\CT), \CL,\theta, [\CI_\af])$ where $[C] \in [\CI_\mu]\cap [\CI_\nu]$, if
$$p_{[B]}(s_{\mu, [C]}s_{\nu, [C]}^*)p_{[B]} = 
s_{\mu, [\theta_\mu(B)]\cap [C] \cap [\theta_\nu(B)] }s_{\nu, [\theta_\mu(B)]\cap [C] \cap [\theta_\nu(B)]}^* \neq 0,$$ 
then 
$[\theta_\mu(B)] \cap [\theta_\nu(B)]
\neq \emptyset.$ Thus  $[\theta_\mu(B)] \neq \emptyset$ and  $ [\theta_\nu(B)]\neq \emptyset$, and hence we see that the paths $\mu$, $\nu$ are of the form 
$$\mu=\af_{i ,n}\af^l\af_{1,k},\ 
\nu=\af_{j,n}\af^m\af_{1,k'}$$ 
for some $i,j,l,m \geq 0$ and $1 \leq k, k' \leq n$ since $(\af,[A])$ is a cycle with no exit. Then $ \emptyset \neq[\theta_\mu(B)] \cap [\theta_\nu(B)] = [\theta_{\af_{1, i-1}\mu}(A)] \cap [\theta_{\af_{1,j-1}\nu}(A)]=[\theta_{\af_{1,k}}(A)]\cap [\theta_{\af_{1,k'}}(A)]
$. Thus we have $k=k'$. 
It then    follows  that 
\begin{align*} 
&s_{\mu, [\theta_\mu(B)]\cap [C] \cap [\theta_\nu(B)] }s_{\nu, [\theta_\mu(B)]\cap [C] \cap [\theta_\nu(B)]}^*  \\
& =s_{\af_{i,n}\af^l\af_{1,k}, [\theta_{\af_{1,k}}(A) \cap C]}s_{\af_{j,n}\af^m\af_{1,k}, [\theta_{\af_{1,k}}(A) \cap C]}^* \\
& = s_{\af_{i,n}\af^l\af_{1,k},  [\theta_{\af_{1,k}}(A) \cap C]       }(s_{\af_{k+1},[\theta_{\af_{1,k+1}}(A) \cap \theta_{\af_{k+1}}(C)]}s_{\af_{k+1},[\theta_{\af_{1,k+1}}(A) \cap \theta_{\af_{k+1}}(C)]}^* )  s_{\af_{j,n}\af^m\af_{1,k},  [\theta_{\af_{1,k}}(A) \cap C]  }^* \\
& \hskip 0.5pc \vdots \\
&=s_{\af_{i,n}\af^l\af_{1,n},[\theta_{\af_{1,n}}(A) \cap \theta_{\af_{k+1,n}}(C)]}s_{\af_{j,n}\af^m\af_{1,n},[\theta_{\af_{1,n}}(A) \cap \theta_{\af_{k+1,n}}(C)]}^* \\
&=s_{\af_{i,n}\af^{l+1},[A\cap \theta_{\af_{k+1,n}}(C)]}s_{\af_{j,n}\af^{m+1},[A\cap \theta_{\af_{k+1,n}}(C)]}^* \\
&=s_{\af_{i,n}\af^{l+1},[A]}s_{\af_{j,n}\af^{m+1},[A]}^*. 
\end{align*}  
This means that the hereditary subalgebra  
$p_{[B]}C^*(\CB/(\CB\setminus\CT), \CL,\theta,[\CI_\af])p_{[B]}$ is generated by the elements 
$s_{\af_i,[\theta_{\af_{1,i}}(A)]}$ for $1 \leq i \leq n$. 
Then the same arguments used in  \cite[Proposition 6.2]{CaK1} show that 
$$p_{[B]}C^*(\CB/(\CB\setminus\CT), \CL,\theta,[\CI_\af])p_{[B]} \cong C(\mathbb{T}) \otimes M_n.$$
 It then follows that  $C^*(\CB/(\CB\setminus\CT),\CL,\theta,[\CI_\af])$ contains an ideal that is not gauge-invariant. 
\end{proof}

\subsection{Partially defined topological graphs}
For a locally compact space $X$, we denote by $\widetilde{X}$ the one-point compactification of $X$.
\begin{definition}(\cite[Definition 8.2]{Ka2021}) A {\it partially defined} topological graph is a quadruple $E=(E^0,E^1,d,r)$ where $E^0$ and $E^1$ are locally compact spaces, $d:E^1 \to E^0$ is a local homeomorphism, and $r$ is a continuous map from an open subset $\text{dom}(r) $ of $ E^1$ to $E^0$ satisfying that the map $\tilde{r}:E^1 \to \widetilde{E^0}$ defined by 
$$\tilde{r}(e)=\left\{ 
\begin{array}{ll}
    r(e) & \hbox{if\ } e \in \operatorname{dom}(r), \\
    \infty & \hbox{if\ }e \notin \operatorname{dom}(r)
    \end{array}
\right.$$ is continuous. 
\end{definition}

Let $E$ be a partially defined topological graph. We recall the construction of the $C^*$-algebra $\CO(E)$. 
For $p \in C(E^1)$, we define a map $\left<p,p\right>:E^0 \to [0,\infty]$  by $\left<p,p\right>(v):=\sum_{e\in d^{-1}(v)}|p(e)|^2$ for $v \in E^0$. 
 Then, the set $C_d(E^1):=\{p\in C(E^1):\left<p,p\right>\in C_0(E^0)\}$ is a  Hilbert $C_0(E^0)$-module via
\[\left<p,q\right>(v)=\sum_{e\in d^{-1}(v)}\overline{p(e)}q(e),\]
and
\[(pa)(e):=p(e)a(d(e)),\]
where $p,q\in C_d(E^1)$, $a\in C_0(E^0)$, $v\in E^0$ and $e\in E^1$. Define a left action $\pi_r: C_0(E^0) \to \CL(C_d(E^1))$by
$$(\pi_r(a)p)(e)=\left\{ 
\begin{array}{ll}
    a(r(e))p(e) & \hbox{if\ } e \in \operatorname{dom}(r), \\
    0 & \hbox{if\ }e \notin \operatorname{dom}(r)
    \end{array}
\right.$$
for $a\in C_0(E^0)$, $p\in C_d(E^1)$ and $e\in E^1$.
Then, we have a $C^*$-correspondence $C_d(E^1)$ over $C_0(E^0)$.

A  {\it Toeplitz $E$-pair} (cf, \cite[Definition 2.2]{Ka2004}) on a $C^*$-algebra $\CA$ is a pair of maps $T=(T^0, T^1)$, where $T^0: C_0(E^0) \to \CA$ is a $*$-homomorphism and $T^1: C_d(E^1) \to \CA$ is a linear map, satisfying 
\begin{enumerate}
\item $T^1(p)^*T^1(q)=T^0(\left\langle p,q\right\rangle)$ for $p,q \in C_d(E^1) $,
\item $T^0(a)T^1(p)=T^1(\pi_r(a)p)$ for $a \in C_0(E^0) $ and $p \in C_d(E^1) $.
\end{enumerate}
By $C^*(T^0,T^1)$ we mean the $C^*$-subalgebra of $\CA$ generated by the Toeplitz $E$-pair $(T^0,T^1)$. 

For a Toeplitz $E$-pair $(T^0,T^1)$, we define a $*$-homomorphism $\Phi: \CK(C_d(E^1)) \to \CA$ by $\Phi(\Theta_{p,q})=T^1(p)T^1(q)^*$ for $p,q \in C_d(E^1)$, where the operator $\Theta_{p,q} \in  \CK(C_d(E^1))  $ is defined by $\Theta_{p,q}(r)= p \left<q,r\right >$ for $r \in C_d(E^1)$.

We define the following subsets of $E^0$(cf, \cite[Definition 2.6]{Ka2004}):
\begin{align*}E^0_{sce} & :=\{v \in E^0: \exists V ~\text{neighborhood of}~v ~\text{such that}~ r^{-1}(V)=\emptyset \}, \\
E^0_{fin}&:=\{v \in E^0: \exists V ~\text{neighborhood of}~v ~\text{such that}~ r^{-1}(V)~\text{is compact} \},\\
E^0_{rg}&:=E^0_{fin} \setminus \overline{E^0_{sce}},\\
E^0_{sg} & :=E^0 \setminus E^0_{rg}.
\end{align*}
 A Toeplitz $E$-pair $(T^0,T^1)$ is called a {\it Cuntz-Krieger E-pair} (cf, \cite[Definition 2.9]{Ka2004}) if $T^0(f)=\Phi(\pi_r(f))$ for all $f \in C_0(E^0_{rg})$. 

We denote by $\CO(E)$ the $C^*$-algebra generated by the universal Cuntz-Krieger E-pair $(t^0,t^1)$. Note that  $\CO(E)$ is generated by $\{ t^0(a): a \in C_0(E^0)\}$ and $\{t^1(p): p \in C_d(E^1)\}$ and that by the universal property of $\CO(E)$, there exists an action $\bt: \mathbb{T} \curvearrowright \CO(E)$ defined by $\bt_z(t^0(a))=t^0(a)$ and $\bt_z(t^1(p))=zt^1(p)$ for $a \in C_0(E^0)$ and $p \in C_d(E^1)$ and $z \in \mathbb{T}$.

We set $d^0=r^0=id_{E^0}$ and $d^1=d, r^1=r$. For $n \geq 2$, we  define a space $E^n$ of paths with length $n$ by 
\[E^{n}:=\{(e_1,\ldots,e_n)\in \prod_{i=1}^n E^1:d(e_i)=r(e_{i+1})(1\leq i<n)\}\]
which we regard as a subspace of the product space $\prod_{i=1}^n E^1$. 
For convenience, we will usually write $e_1 \cdots e_n$ for $(e_1, \cdots, e_n) \in E^n$.
We  define a domain map $d^n:E^n \to E^0$ by  $d^n(e_1 \cdots e_n)=d^{n-1}(e_n)$,  an open subset $\text{dom}(r^n):=(\text{dom}(r)\times E^1 \times \cdots \times E^1) \cap E^n$ of $E^n$ and a  range map $r^n: \text{dom}(r^n) \to E^0 $ by $r^n(e_1 \cdots e_n)=r^1(e_1)$. It is easy to see that $d^n$ is a local homeomorphism, $r^n$ is a continuous map such that $\widetilde{r^n}:E^n \to \widetilde{E^0}$ defined by
$$\widetilde{r^n}(e_1 \cdots e_n)=\left\{ 
\begin{array}{ll}
    r^n(e_1 \cdots e_n) & \hbox{if\ } e_1 \cdots e_n \in \operatorname{dom}(r^n), \\
    \infty & \hbox{if\ }e_1 \cdots e_n \notin \operatorname{dom}(r^n)
    \end{array}
\right.$$ is continuous. Thus, $(E^0,E^n, d^n, r^n)$ is a partially defined topologial graph. 
Then, we can define a $C^*$-correspondence $C_{d_n}(E^n)$ over $C_0(E^0)$ similarly as $C_d(E^1)$. 
By the same argument used in \cite[Proposition 1.27]{Ka2004}, 
 we have that $C_{d^{n+m}}(E^{n+m}) \cong C_{d^n}(E^n) \otimes C_{d^m}(E^m)$ as $C^*$-correspondence over $C_0(E^0)$ for any $n,m \geq 0$, and that 
$$C_{d^n}(E^n)=\overline{span}\{\xi_1 \otimes \xi_2 \otimes \cdots \otimes \xi_n:\xi_i \in C_d(E^1)\}$$
for $n \geq 1$. 
To ease notations, we write $d,r$ for $d^n,r^n$.

For $n \geq 2$, we define a linear map $T^n: C_d(E^n) \to C^*(T)$ by $$T^n(\xi)=T^1(\xi_1) T^1(\xi_2) \cdots T^1(\xi_n)$$ for $\xi=\xi_1 \otimes \xi_2 \otimes \cdots \otimes \xi_n \in C_d(E^n)$, and a linear map $\Phi^n: \CK(C_d(E^n)) \to C^*(T)$ by $\Phi^n(\Theta_{\xi,\eta})=T^n(\xi)T^n(\eta)^*$, where  $\Theta_{\xi, \eta} \in  \CK(C_d(E^n))$.

\begin{definition}(cf.\cite[Definition 5.3]{Ka2006}) Let $E$ be a partially defined topological graph. A path $e=e_1 \cdots e_n \in E^n$ is called a {\it loop} if $r(e)=d(e)$. The vertex $r(e)=d(e)$ is called the {\it base point} of the loop $e$. A loop $e=e_1\cdots e_n$ is said to be {\it without entrances} if $r^{-1}(r(e_k))=\{e_k\}$ for $k=1, \cdots, n$.
\end{definition}

\begin{definition}(cf.\cite[definition 5.4]{Ka2006}) A partially defined topological graph $E$ is  {\it topologically free} if the set of base points of loops without entrances has an empty interior. 
\end{definition}

Using the date of $d:E^1 \to E^0$, $r: \operatorname{dom}(r) \to E^0$ and the maps $T^n, \Phi^n$ for $n \geq 1$, we can have the following Cuntz--Krieger uniqueness theorem for $C^
*$-correspondences arising from  partially defined topological graphs on the same way 
as topological graphs. We omit its proof. 

\begin{theorem}(cf.\cite[Theorem 6.4]{Ka2006}) For a partially defined topological graph $E$, the following are equivalent:
\begin{enumerate}
\item $E$ is topologically free;
\item the natural surjection $\rho: \CO(E) \to C^*(T)$ is an isomorphism for every injective Cuntz--Krieger $E$-pair $T$;
\item any non-zero ideal $I$ of $\CO(E)$ satisfies $I \cap t^0(C_0(E^0)) \neq 0$.
\end{enumerate}
\end{theorem}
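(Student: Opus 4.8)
The plan is to follow Katsura's proof of the analogous statement for topological graphs, \cite[Theorem~6.4]{Ka2006}, adapting it to the partially defined setting by systematically replacing the range maps $r,r^n$ by the continuous maps $\tilde{r}\colon E^1\to\widetilde{E^0}$ and $\widetilde{r^n}\colon E^n\to\widetilde{E^0}$ and keeping all supports of the functions and sections that occur away from the point at infinity of $\widetilde{E^0}$. The implications $(2)\Leftrightarrow(3)$ are formal: if $T$ is an injective Cuntz--Krieger $E$-pair then $\rho|_{t^0(C_0(E^0))}$ is carried to the injective $*$-homomorphism $T^0$, so $\ker\rho\cap t^0(C_0(E^0))=0$ and (3) forces $\ker\rho=0$, which is (2); conversely, if $I$ is a nonzero ideal with $I\cap t^0(C_0(E^0))=0$ and $q\colon\CO(E)\to\CO(E)/I$ is the quotient map, then $(q\circ t^0,q\circ t^1)$ is a Cuntz--Krieger $E$-pair that is injective on $t^0(C_0(E^0))$ and whose associated surjection is $q$, contradicting (2) since $q$ has nonzero kernel.

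For $(3)\Rightarrow(1)$ I argue contrapositively, as in \cite[Theorem~6.4]{Ka2006}. If $E$ is not topologically free, the set of base points of loops without entrances contains a nonempty open set, and from it --- after a Baire-category argument on the locally compact Hausdorff space $E^0$ followed by a shrinking --- one extracts a nonempty open $\Omega\subseteq E^0$ together with a fixed $n$ such that the points of $\Omega$ are base points of loops without entrances of length $n$ depending continuously on the base point. The corresponding loop bundle gives a subquotient of $\CO(E)$ that is Morita equivalent to $C_0(\Omega)\otimes C(\T)$, in which the relevant vertex elements of $t^0(C_0(E^0))$ correspond to elements whose $C(\T)$-component is the constant function $1$. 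Pulling back the ideal generated by $C_0(\Omega)\otimes C_0(\T\setminus\{1\})$ then produces a nonzero ideal of $\CO(E)$ meeting $t^0(C_0(E^0))$ trivially --- one verifies the latter by collapsing $\T$ to the point $1$ along the loop bundle, which yields a Cuntz--Krieger $E$-pair that is injective on $t^0(C_0(E^0))$ and whose kernel contains that ideal --- so (3) fails.

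The substance is $(1)\Rightarrow(2)$. Let $T$ be an injective Cuntz--Krieger $E$-pair and $\rho\colon\CO(E)\to C^*(T)$ the canonical surjection; it suffices to show $\ker\rho$ contains no nonzero positive element. Write $\bt$ for the gauge action on $\CO(E)$, $\CF:=\CO(E)^{\bt}$ for the fixed-point algebra, and $\Psi\colon\CO(E)\to\CF$, $\Psi(x)=\int_\T\bt_z(x)\,dz$, for the canonical faithful conditional expectation. \emph{Step 1.} The restriction $\rho|_{\CF}$ is isometric: $\CF$ is the closure of the increasing union of the finite-stage subalgebras $\CF_n$ generated by $t^0(C_0(E^0))$ together with the ranges $\Phi^k(\CK(C_{d^k}(E^k)))$ for $k\le n$, and since $T^0$ is isometric each $T^k$ is isometric and each $\Phi^k_T$ is injective on $\CK(C_{d^k}(E^k))$, so an induction on $n$ using the Cuntz--Krieger relation and the extension structure of the $\CF_n$ (exactly as for topological graphs, cf.\ \cite{Ka2004}) shows $\rho$ is isometric on every $\CF_n$, hence on $\CF$. \emph{Step 2.} The key lemma: if $E$ is topologically free, then for every $a$ in the dense $*$-algebra $\operatorname{span}\{t^n(\xi)t^m(\eta)^*:\xi\in C_{d^n}(E^n),\ \eta\in C_{d^m}(E^m)\}$ and every $\ep>0$ there is a positive contraction $c\in t^0(C_0(E^0))$ with $\|cac-c\Psi(a)c\|<\ep$ and $\|c\Psi(a)c\|\ge\|\Psi(a)\|-\ep$. \emph{Step 3.} Granting the lemma, let $0\ne a\in\ker\rho$; then $a^*a\in\ker\rho$ is positive and nonzero, so $\Psi(a^*a)\ne0$ by faithfulness. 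Fix $\ep$ with $0<\ep<\tfrac{1}{4}\|\Psi(a^*a)\|$, a finite approximant $b$ of $a^*a$ from the dense $*$-algebra with $\|a^*a-b\|<\ep$ (so $\|\Psi(b)\|>\|\Psi(a^*a)\|-\ep$), and $c$ for $b$ as in the lemma. Then $\rho(cbc)=\rho(c(b-a^*a)c)$ has norm $<\ep$, whence $\|\rho(c\Psi(b)c)\|\le\|\rho(cbc)\|+\|\rho(cbc-c\Psi(b)c)\|<2\ep$; since $c\Psi(b)c\in\CF$, Step 1 gives $\|c\Psi(b)c\|<2\ep$, contradicting $\|c\Psi(b)c\|\ge\|\Psi(b)\|-\ep>\|\Psi(a^*a)\|-2\ep>2\ep$. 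Hence $\ker\rho=0$.

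The main obstacle is the cutdown lemma of Step 2. For $c=t^0(f)$ the off-diagonal part to be controlled is $\|c(a-\Psi(a))c\|=\|\sum_{n\ne m}t^n(\pi_{r^n}(f)\xi)t^m(\pi_{r^m}(f)\eta)^*\|$, and the identity $\|t^n(\alpha)t^m(\beta)^*\|^2=\sup_{v\in E^0}\langle\alpha,\alpha\rangle(v)\langle\beta,\beta\rangle(v)$ reduces the survival of such a term after compression near a point $v_0$ to the existence of paths of unequal lengths, with ranges near $v_0$, ending at a common vertex; if this persisted for all shrinking neighborhoods of $v_0$, a compactness and local-homeomorphism argument would produce a loop without entrances based near $v_0$. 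Topological freeness supplies a dense set of points that are not base points of loops without entrances, and continuity lets one place $v_0$ simultaneously in that dense set and near a point where $\|\Psi(a)\|$ is essentially attained, so that $\|c\Psi(a)c\|$ stays close to $\|\Psi(a)\|$ for $f$ supported in a small enough neighborhood of $v_0$. Making this implication precise in the topological rather than combinatorial setting, and carrying it through while keeping every support away from the point at infinity of $\widetilde{E^0}$, is the technical heart of the proof.
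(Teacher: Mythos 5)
Your proposal is correct and follows exactly the route the paper intends: the paper omits the proof, stating it is obtained "in the same way as topological graphs" from \cite[Theorem 6.4]{Ka2006}, and your outline is precisely that adaptation of Katsura's argument (formal equivalence of (2) and (3), the loop-without-entrances construction for the failure case, and the conditional-expectation/core-isometry/cutdown-lemma argument for the main implication), with the range map replaced by $\tilde r$ and $\widetilde{r^n}$ throughout. The only caveat is that, like the paper, you leave the technical cutdown lemma and the Baire-category step at the level of a sketch rather than a full verification.
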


\section{The Cuntz--Krieger uniqueness theorem} \label{CKUT}

We will in this section generalize the Cuntz--Krieger uniqueness theorem \cite[Theorem 9.9]{COP} to the $C^*$-algebra of an arbitrary generalized Boolean dynamical system.

\subsection{A Cuntz--Krieger uniqueness theorem for $C^*(\CB, \CL,\theta, \CI_\af)$}\label{CKUT for GBDS}

We first generalize Cuntz--Krieger uniqueness theorem \cite[Theorem 9.9]{COP} to the $C^*$-algebra of an arbitrary generalized Boolean dynamical system. We consider  a partially defined topological graph $E_{(\CB,\CL,\theta,\CI_\alpha)}$  from an arbitrary  generalized Boolean dynamical system $(\CB,\CL,\theta,\CI_\alpha)$, and show that $C^*(\CB,\CL,\theta,\CI_\alpha)$ and $\CO(E_{(\CB,\CL,\theta,\CI_\alpha)})$ are isomorphic. We then apply the Cuntz--Krieger uniqueness theorem \cite[Theorem 6.14]{Ka2006} of $\CO(E_{(\CB,\CL,\theta,\CI_\alpha)})$.

Let $(\CB, \CL,\theta, \CI_\af)$ be a generalized Boolean dynamical system. We first recall some terminologies to define a partially defined topological graph associated to $(\CB, \CL,\theta, \CI_\af)$.
 Following \cite{CasK1}, we let    $\CW^*=\{\alpha\in\CL^*:\CI_\alpha\neq \{\emptyset\}\}$.   Put $X_\af:=\widehat{\CI_\af}$ for each  $\af \in  \CW^*$ and equip $X_\af$
 with the topology generated by $\{Z(\af, A): A\in\CI_\af\}$, where 
  we let $$Z(\af, A):=\{\CF \in X_\af: A \in \CF\}$$ for $A \in \CI_\af$. 
We also  equip the set $X_\emptyset\cup\{\emptyset\} (=\widehat{\CB}\cup \{\emptyset\} )$  with a suitable topology; if $\CB$ is unital, the topology is such that $\{\emptyset\}$ is an isolated point. If $\CB$ is not unital, then $\emptyset$ plays the role of the point at infinity in the one-point compactification of $X_\emptyset$.

Let $\af, \bt \in \CW^* \setminus \{\emptyset\}$ be such that $\af\bt \in \CW^*$. Define a continuous map
$$f_{\af[\bt]}: X_{\af\bt} \to X_\af ~\text{by}~
 f_{\af[\bt]}(\CF)=\{A \in \CI_\af: \theta_\bt(A) \in \CF\}$$
 for $\CF \in X_{\af\bt}$, and a continuous map $$f_{\emptyset[\bt]}: X_\bt \to X_\emptyset \cup \{\emptyset\} ~\text{by}~f_{\emptyset[\bt]}(\CF)=\{A \in \CB: \theta_\bt(A) \in \CF\}$$
for $\CF \in X_\bt$ (\cite[Lemma 3.23]{CasK1}). 
 
Let $\af, \bt \in \CW^*$ be such that $\af\bt \in \CW^*$. We also define an open  subspace
$$X_{(\af)\bt}:=\{\CF \in X_\bt:\CF \cap \CI_{\af\bt}\neq \emptyset  \}$$ of $X_\bt$ (\cite[Lemma 4.6(vii)]{CasK1}), a continuous map 
 $$g_{(\af)\bt}: X_{(\af)\bt} \to X_{\af\bt} ~\text{by}~  g_{(\af)\bt}(\CF):=     \CF \cap \CI_{\af\bt}  $$
for each  $\CF \in X_{(\af)\bt}$ (\cite[Lemma 4.6(vi)]{CasK1}), and 
 a continuous map 
$$h_{[\af]\bt}:X_{\af\bt} \to X_{(\af)\bt} ~\text{by}~h_{[\af]\bt}(\CF):=\{A\in\CI_\bt:B\subseteq A\text{ for some }B\in\CF\}$$ for  $\CF \in X_{\af\bt}$ (\cite[Lemma 4.8(v)]{CasK1}).
Note that  $X_{(\emptyset)\bt}=X_{\bt}$,   $g_{(\emptyset)\bt}$ and $h_{[\emptyset]\bt}$ are the identity functions on $X_\bt$, and that  $h_{[\af]\bt}: X_{\af\bt} \to X_{(\af)\bt}$ and $g_{(\af)\bt}:X_{(\af)\bt} \to X_{\af\bt}$ are mutually inverses (\cite[Lemma 4.8(iii)]{CasK1}).

We now define a partially defined topological graph from $(\CB, \CL,\theta, \CI_\af)$. Let $$E^0_{(\CB,\CL,\theta,\CI_\alpha)}:=X_\emptyset ~\text{and}~	E^1_{(\CB,\CL,\theta,\CI_\alpha)}:=\bigl\{e^\alpha_\eta:\alpha\in\CL,\ \eta\in X_\alpha\bigr\}
$$
and equip $E^1_{(\CB,\CL,\theta,\CI_\alpha)}$ with the topology generated by 
$\bigcup_{\alpha\in\CL} \{Z^1(\af, B):B\in\CI_\alpha\}, $
where $$Z^1(\af, B):=\{e^\af_\eta: \eta \in X_\af  , B \in \eta\}.$$  Note that $E^1_{(\CB,\CL,\theta,\CI_\alpha)}$ is homeomorphic to the disjoint union of the family $\{X_{\af}\}_{\af\in\CL}$.
 Then, define a local homeomorphism
 $$d:E^1_{(\CB,\CL,\theta,\CI_\alpha)}\to E^0_{(\CB,\CL,\theta,\CI_\alpha)} ~\text{by}~d(e^\alpha_\eta)=h_{[\alpha]\emptyset}(\eta).$$ Put
\begin{align*} \operatorname{dom}(r)&:=\{e^\af_\eta:  \af \in \CL,~ \eta \cap \CR_\af \neq \emptyset\} \subset E^1_{(\CB,\CL,\theta,\CI_\alpha)} \\
&\Big(= \bigcup_{\af \in \CL, A \in \eta \cap \CR_\af} Z^1(\af,A) \Big),
\end{align*}
which is  an open subset of $E^1_{(\CB,\CL,\theta,\CI_\alpha)}$, and define a continuous map $$r:\operatorname{dom}(r)\to E^0_{(\CB,\CL,\theta,\CI_\alpha)} ~\text{by}~r(e^\alpha_\eta)=f_{\emptyset[\af]}(\eta).$$ Then,  the map $\tilde{r}:E^1_{(\CB,\CL,\theta,\CI_\alpha)} \to E^0_{(\CB,\CL,\theta,\CI_\alpha)} \cup \{\emptyset\}$ defined by 
$$\tilde{r}(e)=\left\{ 
\begin{array}{ll}
    r(e) & \hbox{if\ } e \in \operatorname{dom}(r), \\
    \emptyset & \hbox{if\ }e \notin \operatorname{dom}(r)
    \end{array}
\right.$$ is continuous. Thus, $E_{(\CB,\CL,\theta,\CI_\af)}:=(E^0_{(\CB,\CL,\theta,\CI_\alpha)},E^1_{(\CB,\CL,\theta,\CI_\alpha)}, d,r)$ is a partially defined topological graph (see \cite[Proposition 7.1]{CasK1}). To ease notation, we let $E:=E_{(\CB,\CL,\theta,\CI_\af)}$, $E^0:=E^0_{(\CB,\CL,\theta,\CI_\alpha)}$ and $E^1:=E^1_{(\CB,\CL,\theta,\CI_\alpha)}$. 

The following lemmas will be frequently used throughout the paper.  
\begin{lemma}(\cite[Lemma 3.3]{CasK2}) \label{range of path} Let  $\mu=e^{\af_1}_{\eta_1} \cdots e^{\af_n}_{\eta_n} \in E^n$, where $1 \leq n$. Then, we have $$r(\mu)=f_{\emptyset[\af_1 \cdots \af_n]}\big(g_{(\af_1 \cdots \af_{n-1})\alpha_n}(\eta_n) \big).$$ 
\end{lemma}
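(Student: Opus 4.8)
The statement to prove is Lemma~\ref{range of path}, which computes the range $r(\mu)$ of a path $\mu = e^{\af_1}_{\eta_1}\cdots e^{\af_n}_{\eta_n}\in E^n$ in terms of the maps $f_{\emptyset[\cdot]}$ and $g_{(\cdot)\cdot}$ introduced for the partially defined topological graph $E_{(\CB,\CL,\theta,\CI_\af)}$.

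\textbf{Plan of proof.} The plan is to proceed by induction on $n$. For $n=1$ the claim reads $r(e^{\af_1}_{\eta_1}) = f_{\emptyset[\af_1]}\big(g_{(\emptyset)\af_1}(\eta_1)\big)$, and since $g_{(\emptyset)\af_1}$ is the identity on $X_{\af_1}$ this is exactly the definition $r(e^\alpha_\eta) = f_{\emptyset[\af]}(\eta)$. For the inductive step, suppose the formula holds for paths of length $n-1$ and let $\mu = e^{\af_1}_{\eta_1}\cdots e^{\af_n}_{\eta_n}\in E^n$. Write $\mu = \mu' e^{\af_n}_{\eta_n}$ where $\mu' = e^{\af_1}_{\eta_1}\cdots e^{\af_{n-1}}_{\eta_{n-1}}\in E^{n-1}$. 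By the definition of the range map on $E^n$ (i.e.\ $r^n(e_1\cdots e_n) = r^1(e_1)$, so in particular the range of $\mu$ agrees with the range of $\mu'$ read as a path, but with the edge-compatibility relation $d(e_{n-1}) = r(e_n)$ built in), the key point is to relate the range of $\mu$ to the chain of maps $f_{\emptyset[\af_1\cdots\af_n]}$ and $g_{(\af_1\cdots\af_{n-1})\af_n}$. Concretely, I would use the compatibility condition $d(e^{\af_{n-1}}_{\eta_{n-1}}) = r(e^{\af_n}_{\eta_n})$, which by definition of $d$ and $r$ says $h_{[\af_{n-1}]\emptyset}(\eta_{n-1}) = f_{\emptyset[\af_n]}(\eta_n)$, and then apply the inductive hypothesis to $\mu'$ after identifying how $\eta_{n-1}$ is determined by $\eta_n$.

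\textbf{Key steps.} The main technical ingredients I expect to need are: (1) the cocycle-type identities for the transition maps, namely that $f_{\emptyset[\af\bt]} = f_{\emptyset[\af]}\circ f_{\af[\bt]}$ and that $g_{(\af\bt)\gm} = g_{(\af)\bt\gm}\circ (\text{appropriate restriction})$, together with the relations between $f_{\af[\bt]}$, $g_{(\af)\bt}$ and $h_{[\af]\bt}$ as mutually inverse homeomorphisms where applicable (all of these are cited from \cite{CasK1}); (2) the explicit descriptions $d(e^\alpha_\eta) = h_{[\alpha]\emptyset}(\eta)$ and $r(e^\alpha_\eta) = f_{\emptyset[\af]}(\eta)$; and (3) the defining edge-compatibility relation in $E^n$. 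The induction then amounts to expanding $r(\mu) = r^1(e^{\af_1}_{\eta_1}) = f_{\emptyset[\af_1]}(\eta_1)$ and peeling off one composition factor at a time, using the compatibility relations to rewrite $\eta_1$ in terms of $g$-images of $\eta_n$, so that after $n$ steps one arrives at $f_{\emptyset[\af_1\cdots\af_n]}\big(g_{(\af_1\cdots\af_{n-1})\af_n}(\eta_n)\big)$.

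\textbf{Main obstacle.} The hard part will be bookkeeping the exact form of the cocycle identities that combine the $f$-maps across a concatenation of labels with the $g$-map applied at the end: one must verify that repeatedly applying the edge-compatibility relation $h_{[\af_{k}]\emptyset}(\eta_k) = f_{\emptyset[\af_{k+1}]}(\eta_{k+1})$ together with the inverse relationship between $h_{[\af]\bt}$ and $g_{(\af)\bt}$ telescopes correctly into the single map $g_{(\af_1\cdots\af_{n-1})\af_n}$, rather than an unreduced composition. This is a purely formal verification using the lemmas of \cite{CasK1}, but it requires care with the indices and with which spaces $X_\gm$, $X_{(\af)\gm}$ each intermediate object lives in. Everything else is routine once the relevant identities from \cite{CasK1} are invoked, which is presumably why the authors of \cite{CasK2} could state this as a lemma with a short proof.
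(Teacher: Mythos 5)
A preliminary remark: the paper does not prove this lemma at all --- it is imported verbatim from \cite[Lemma 3.3]{CasK2} --- so there is no in-paper argument to compare your proposal against; I can only judge the plan on its own merits.

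The plan is sound, and the telescoping you flag as the main obstacle does work out; what your proposal leaves undone is exactly that verification, so let me record it. Write $\mu=\mu'e^{\af_n}_{\eta_n}$ with $\mu'=e^{\af_1}_{\eta_1}\cdots e^{\af_{n-1}}_{\eta_{n-1}}\in E^{n-1}$. Since $r^n(\mu)=r^1(e^{\af_1}_{\eta_1})=r^{n-1}(\mu')$, the induction hypothesis reduces everything to the single identity
\[
g_{(\af_1\cdots\af_{n-2})\af_{n-1}}(\eta_{n-1})=f_{\af_1\cdots\af_{n-1}[\af_n]}\bigl(g_{(\af_1\cdots\af_{n-1})\af_n}(\eta_n)\bigr),
\]
after which one applies $f_{\emptyset[\af_1\cdots\af_{n-1}]}$ and the composition rule $f_{\emptyset[\af_1\cdots\af_{n-1}]}\circ f_{\af_1\cdots\af_{n-1}[\af_n]}=f_{\emptyset[\af_1\cdots\af_n]}$ (valid because $\theta_{\af_1\cdots\af_{n-1}}(A)\in\CI_{\af_1\cdots\af_{n-1}}$ for every $A\in\CB$). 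To prove the displayed identity, compute the right-hand side: for $A\in\CI_{\af_1\cdots\af_{n-1}}$ one has $\theta_{\af_n}(A)\in\CI_{\af_1\cdots\af_n}$ automatically, so it equals $\{A\in\CI_{\af_1\cdots\af_{n-1}}:\theta_{\af_n}(A)\in\eta_n\}=\CI_{\af_1\cdots\af_{n-1}}\cap f_{\emptyset[\af_n]}(\eta_n)=\CI_{\af_1\cdots\af_{n-1}}\cap h_{[\af_{n-1}]\emptyset}(\eta_{n-1})$, the last step being the edge-compatibility $d(e^{\af_{n-1}}_{\eta_{n-1}})=r(e^{\af_n}_{\eta_n})$ you quote; and since $\CI_{\af_1\cdots\af_{n-1}}\subseteq\CI_{\af_{n-1}}$ and the ultrafilter $\eta_{n-1}\subseteq\CI_{\af_{n-1}}$ is upward closed within $\CI_{\af_{n-1}}$, this set equals $\CI_{\af_1\cdots\af_{n-1}}\cap\eta_{n-1}=g_{(\af_1\cdots\af_{n-2})\af_{n-1}}(\eta_{n-1})$. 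Two small points you should still make explicit: (i) well-definedness, i.e.\ $\eta_n\cap\CI_{\af_1\cdots\af_n}\ne\emptyset$ so that $g_{(\af_1\cdots\af_{n-1})\af_n}(\eta_n)$ makes sense; this comes out of the same computation, since any $B\in\eta_{n-1}\cap\CI_{\af_1\cdots\af_{n-1}}$ (nonempty by the inductive hypothesis, trivially so when $n=2$) satisfies $\theta_{\af_n}(B)\in\eta_n\cap\CI_{\af_1\cdots\af_n}$; and (ii) if $e^{\af_1}_{\eta_1}\notin\operatorname{dom}(r)$ the asserted equality must be read via $\tilde{r}$, both sides being the point at infinity (equivalently, the computation above is an equality of subsets of $\CB$ that may both be empty). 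With these two items added, your induction closes, and the base case is exactly as you say since $g_{(\emptyset)\af_1}$ is the identity.
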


\begin{lemma}\label{A} Let $\af \in \CL$. For $e^\af_\eta, e^\af_\xi \in X_\af$, we have $d(e^\af_\eta)=d(e^\af_\xi)$ if and only if $\eta=\xi$. 
\end{lemma}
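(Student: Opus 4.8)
The statement to be proved is Lemma~\ref{A}: for a fixed $\af \in \CL$ and two edges $e^\af_\eta, e^\af_\xi$ (so $\eta, \xi \in X_\af$), one has $d(e^\af_\eta) = d(e^\af_\xi)$ if and only if $\eta = \xi$. Since $d(e^\af_\eta) = h_{[\af]\emptyset}(\eta)$ by definition of the domain map, this amounts to showing that the continuous map $h_{[\af]\emptyset} : X_\af \to X_\emptyset$ is injective. The ``if'' direction is trivial, so the content is the ``only if'' direction.

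\textbf{Main steps.} First I would unwind the definitions: $h_{[\af]\emptyset}(\eta) = \{A \in \CI_\emptyset : B \subseteq A \text{ for some } B \in \eta\}$, and recall that $\CI_\emptyset = \CB$, so $h_{[\af]\emptyset}(\eta)$ is the upward closure of $\eta$ inside $\CB$. The claim is then that an ultrafilter $\eta$ of $\CI_\af$ is recovered from its upward closure in $\CB$ by intersecting back down with $\CI_\af$ — i.e. $\eta = h_{[\af]\emptyset}(\eta) \cap \CI_\af$. Granting this, if $h_{[\af]\emptyset}(\eta) = h_{[\af]\emptyset}(\xi)$ then intersecting both sides with $\CI_\af$ gives $\eta = \xi$. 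To prove $\eta = h_{[\af]\emptyset}(\eta) \cap \CI_\af$: the inclusion $\subseteq$ is immediate since $\CI_\af \subseteq \CB$ and $\eta$ is contained in its own upward closure. For $\supseteq$, take $A \in \CI_\af$ with $B \subseteq A$ for some $B \in \eta$; since $\eta$ is a filter on $\CI_\af$ that is upper closed \emph{within} $\CI_\af$ and $A \in \CI_\af$ with $B \subseteq A$, we get $A \in \eta$. In fact this shows the even cleaner statement $h_{[\af]\emptyset}(\eta) \cap \CI_\af = \eta$ for \emph{every} filter $\eta$ on $\CI_\af$, ultrafilter-ness not required. Alternatively, and perhaps more in the spirit of the cited results, I would invoke the fact already recorded in the excerpt that $h_{[\af]\bt} : X_{\af\bt} \to X_{(\af)\bt}$ and $g_{(\af)\bt} : X_{(\af)\bt} \to X_{\af\bt}$ are mutually inverse homeomorphisms (\cite[Lemma 4.8(iii)]{CasK1}): specializing $\bt = \emptyset$, $h_{[\af]\emptyset}$ is a homeomorphism onto its image $X_{(\af)\emptyset} \subseteq X_\emptyset$, hence injective, which is exactly what is needed.

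\textbf{Expected obstacle.} There is essentially no serious obstacle here; the lemma is a bookkeeping fact about ultrafilters on an ideal versus their traces in the ambient Boolean algebra. The only mild subtlety is being careful that $X_\emptyset$ really is $\widehat{\CB}$ (not $\widehat{\CB} \cup \{\emptyset\}$) as the codomain of $d$, and that $h_{[\af]\emptyset}(\eta)$ is a genuine ultrafilter of $\CB$ rather than merely a filter — but this is guaranteed by \cite[Lemma 4.8(v)]{CasK1} as cited, so it may simply be quoted. I would therefore keep the proof to two or three lines, either citing the mutual-inverse property directly or giving the one-line verification that $\eta \mapsto h_{[\af]\emptyset}(\eta)$ has the left inverse $\CF \mapsto \CF \cap \CI_\af$.
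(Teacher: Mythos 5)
Your proposal is correct and follows essentially the same route as the paper: the paper's proof simply applies $g_{(\af)\emptyset}$ to both sides, using the cited fact that $h_{[\af]\emptyset}$ and $g_{(\af)\emptyset}$ are mutually inverse, which is exactly your second alternative (and your direct verification that $\CF\mapsto\CF\cap\CI_\af$ is a left inverse is just that same fact unwound).
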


\begin{proof} ($\Leftarrow$) It is claer. 

($\Rightarrow$) 
$h_{[\af]\emptyset}(\eta)=h_{[\af]\emptyset}(\xi)$ implies that $\eta=g_{(\af)\emptyset}(h_{[\af]\emptyset}(\eta)) =g_{(\af)\emptyset}(h_{[\af]\emptyset}(\xi))=\xi $.
\end{proof}

\begin{proposition}\label{isom}
Let $(\CB,\CL,\theta,\CI_\alpha)$ be a generalized Boolean dynamical system and let $E_{(\CB,\CL,\theta,\CI_\af)}=(E^0,E^1, d,r)$ be the associated partially defined topological graph. Then 
\begin{enumerate}
	\item there is an isomorphism $\phi:C^*(\CB,\CL,\theta,\CI_\alpha)\to\CO(E_{(\CB,\CL,\theta,\CI_\af)})$ that maps $p_A$ to $t^0(1_{Z(A)})$ for $A\in\CB$ and $s_{\alpha,B}$ to $t^1(1_{Z^1(\alpha,B)})$ for $\alpha\in\CL$ and $B\in\CI_\alpha$; 
	\item if $\psi$ is a $*$-homomorphism defined on $\CO(E_{(\CB,\CL,\theta,\CI_\alpha)})$, then $\psi\circ t^0$ is injective if and only if $\psi(\phi(p_A))\ne 0$ for all $A\in\CB \setminus \{\emptyset\}$.
\end{enumerate}
\end{proposition}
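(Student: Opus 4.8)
The plan is to construct the isomorphism $\phi$ by invoking the universal properties on both sides, and then to identify which projections stay nonzero under a given $*$-homomorphism. For part (1), I would first check that the families $\{t^0(1_{Z(A)}):A\in\CB\}$ and $\{t^1(1_{Z^1(\alpha,B)}):\alpha\in\CL,\ B\in\CI_\alpha\}$ in $\CO(E)$ form a $(\CB,\CL,\theta,\CI_\alpha)$-representation. The relations (i)--(iii) of a representation translate into elementary facts about characteristic functions of the compact-open sets $Z(A)$ and $Z^1(\alpha,B)$: the map $A\mapsto 1_{Z(A)}$ is a Boolean-algebra homomorphism into $C_0(E^0)$ because $Z(A\cap A')=Z(A)\cap Z(A')$, $Z(A\cup A')=Z(A)\cup Z(A')$, and $Z(\emptyset)=\emptyset$; relation (ii) follows from $\pi_r(1_{Z(A)})1_{Z^1(\alpha,B)}=1_{Z^1(\alpha,\theta_\alpha(A)\cap B)}$, which in turn comes from the description of $r$ via $f_{\emptyset[\alpha]}$; and relation (iii) is the Toeplitz identity $T^1(p)^*T^1(q)=T^0(\langle p,q\rangle)$ together with the computation $\langle 1_{Z^1(\alpha,B)},1_{Z^1(\alpha',B')}\rangle=\delta_{\alpha,\alpha'}1_{Z(B\cap B')}$, using Lemma~\ref{A} to see that $d$ restricted to each sheet $X_\alpha$ is injective on the relevant fibers. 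Relation (iv), the Cuntz--Krieger relation for $A\in\CB_{\reg}$, is exactly the requirement that the Toeplitz pair be a Cuntz--Krieger $E$-pair on $C_0(E^0_{rg})$; here I would verify that $Z(A)\subseteq E^0_{rg}$ precisely when $A\in\CB_{\reg}$ and that $\Phi(\pi_r(1_{Z(A)}))=\sum_{\alpha\in\Delta_A}t^1(1_{Z^1(\alpha,\theta_\alpha(A))})t^1(1_{Z^1(\alpha,\theta_\alpha(A))})^*$, which is a finite-rank identity in $\CK(C_d(E^1))$ valid because $\Delta_A$ is finite. By universality of $C^*(\CB,\CL,\theta,\CI_\alpha)$ this produces a surjective $*$-homomorphism $\phi$ onto $C^*(t^0,t^1)=\CO(E)$ with the stated values on generators.

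For surjectivity and injectivity of $\phi$ I would argue in two directions. Surjectivity: the sets $Z(A)$ and $Z^1(\alpha,B)$ are a basis of compact-open sets for $E^0$ and $E^1$ respectively, so their characteristic functions densely span $C_0(E^0)$ and $C_d(E^1)$; since $\CO(E)$ is generated by $t^0(C_0(E^0))$ and $t^1(C_d(E^1))$, the image of $\phi$ is all of $\CO(E)$. Injectivity: here I would use the gauge-invariant uniqueness theorem. Both algebras carry circle actions --- the gauge action $\gamma$ on $C^*(\CB,\CL,\theta,\CI_\alpha)$ and the action $\beta$ on $\CO(E)$ --- and $\phi$ intertwines them since it sends $p_A$ to a fixed point and $s_{\alpha,B}$ to a degree-one element. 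Moreover $\phi$ is injective on the coefficient algebra: $\phi(p_A)=t^0(1_{Z(A)})=0$ would force $1_{Z(A)}=0$ in $C_0(E^0)$ because $t^0$ is injective (the universal Cuntz--Krieger $E$-pair has injective $t^0$), hence $Z(A)=\emptyset$, hence $A=\emptyset$. A gauge-invariant uniqueness theorem for $C^*$-algebras of generalized Boolean dynamical systems (available in the literature for this class, or deducible from the one for $\CO(E)$) then gives injectivity of $\phi$.

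Part (2) is now a short deduction. Given a $*$-homomorphism $\psi$ on $\CO(E)$, the composite $\psi\circ t^0:C_0(E^0)\to B$ is a $*$-homomorphism between $C^*$-algebras, so it is injective if and only if it does not kill any nonzero positive element, equivalently (since $C_0(E^0)=C_0(\widehat{\CB})$ and the $Z(A)$ form a basis of compact-open sets) if and only if $\psi(t^0(1_{Z(A)}))\ne 0$ for every $A\in\CB\setminus\{\emptyset\}$: one direction is immediate, and for the other, if $\psi\circ t^0$ killed some nonzero $f\in C_0(E^0)$ then its support would contain some nonempty $Z(A)$ and $\psi(t^0(1_{Z(A)}))$ would vanish because $0\le 1_{Z(A)}\le \|f\|^{-1}|f|\cdot(\text{something})$ --- more cleanly, a nonzero ideal of $C_0(E^0)$ must contain $1_{Z(A)}$ for some $A\ne\emptyset$ since these characteristic functions generate $C_0(E^0)$ as an ideal. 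Finally $\psi(t^0(1_{Z(A)}))=\psi(\phi(p_A))$ by part (1), which gives the stated equivalence.

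I expect the main obstacle to be the verification of relation (ii) and the Cuntz--Krieger relation (iv) at the level of Hilbert-module computations --- specifically, pinning down the identifications $\pi_r(1_{Z(A)})1_{Z^1(\alpha,B)}=1_{Z^1(\alpha,\theta_\alpha(A)\cap B)}$ and $\Phi(\pi_r(1_{Z(A)}))=\sum_{\alpha\in\Delta_A}\Theta_{1_{Z^1(\alpha,\theta_\alpha(A))},1_{Z^1(\alpha,\theta_\alpha(A))}}$ for $A\in\CB_{\reg}$, which require carefully tracing the definitions of $r$, $d$, $f_{\emptyset[\alpha]}$ and $h_{[\alpha]\emptyset}$ and using that $Z(A)\subseteq E^0_{rg}\iff A\in\CB_{\reg}$. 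The identification $E^0_{rg}=\bigcup_{A\in\CB_{\reg}}Z(A)$ (equivalently, the characterization of regular sets in terms of $E^0_{fin}$ and $\overline{E^0_{sce}}$) is the technical heart, and I would either cite it from \cite{CasK1} or prove it by unwinding the definition of $\Delta_A$.
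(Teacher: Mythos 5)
Your proposal is correct and follows essentially the same route as the paper: you verify that $\{t^0(1_{Z(A)}),t^1(1_{Z^1(\alpha,B)})\}$ is a $(\CB,\CL,\theta,\CI_\alpha)$-representation (with the same key computations, including $\pi_r(1_{Z(A)})=\sum_{\alpha\in\Delta_A}\Theta_{1_{Z^1(\alpha,\theta_\alpha(A))},1_{Z^1(\alpha,\theta_\alpha(A))}}$ and $Z(A)\subseteq E^0_{rg}$ for $A\in\CB_\reg$, which the paper cites from \cite[Lemma 7.9]{CasK1}), obtain $\phi$ by universality, get injectivity from injectivity of $t^0$ plus the gauge-invariant uniqueness theorem of \cite{CaK2}, surjectivity from the generating sets, and deduce (2) from the $Z(A)$ forming a basis of compact open sets. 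The only difference is that you spell out the ideal-of-$C_0(E^0)$ argument for (2), which the paper leaves as an easy remark.
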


\begin{proof}(1):   Let $(t^0,t^1)$ be the universal  Cuntz-Krieger $E_{(\CB,\CL,\theta,\CI_\af)}$-pair in a $C^*$-algebra $\CX$.   We claim that $$\{t^0(1_{Z(A)}), t^1(1_{Z^1(\af,B)}): A \in \CB, \af \in \CL ~\text{and}~ B \in \CI_\af\}$$ is a   $(\CB,\CL,\theta, \CI_\af)$-representation  in $\CX$.  Let $A,A'\in\mathcal{B}$, $\alpha,\alpha'\in\mathcal{L}$, $B\in\mathcal{I}_\alpha$ and $B'\in\mathcal{I}_{\alpha'}$. Then, we check the following;
\begin{enumerate}
\item[(i)] It is easy to check that 
$t^0(1_{Z(A)})t^0(1_{Z(A')})=t^0(1_{Z(A \cap A')})$ and 
$t^0(1_{Z(A \cup A')})=t^0(1_{Z(A)})+t^0(1_{Z(A')})-t^0(1_{Z(A \cap A')})$. 
\item[(ii)] 
For $e^\bt_\eta \in E^1$, we compute
\begin{align*}
&(\pi_r(1_{Z(A)})1_{Z^1(\af,B)})(e^\bt_\eta) \\
&= \begin{cases} 1_{Z(A)}(r(e^\bt_\eta))1_{Z^1(\af,B)}(e^\bt_\eta)&\text{if }  e^\bt_\eta \in \operatorname{dom}(r), \\
0&\text{if}~ e^\bt_\eta \notin \operatorname{dom}(r)\\
\end{cases} \\
&=\begin{cases}
1_{Z(A)}(f_{\emptyset[\bt]}(\eta)) &\text{if } e^\bt_\eta \in \operatorname{dom}(r); \ \bt=\af ~\text{and}~ B \in \eta,\\
0&\text{otherwise}\\
\end{cases}\\
&=\begin{cases}
1&\text{if }  e^\bt_\eta \in \operatorname{dom}(r); \ \bt=\af ~\text{and}~ B \in \eta  ~\text{and}~ \theta_\af(A) \in \eta,\\
0&\text{otherwise}.\\
\end{cases}
\end{align*} Since  $B, \theta_\af(A) \in \eta \iff B\cap \theta_\af(A) \in \eta  $, we have $\pi_r(1_{Z(A)})1_{Z^1(\af,B)}=1_{Z^1(\af, B \cap \theta_\af(A))}$. On the other hand, for $e^\bt_\eta \in E^1$,
\begin{align*}  (1_{Z^1(\af,B)}1_{Z(\theta_\af(A))})(e^\bt_\eta)&=1_{Z^1(\af,B)}(e^\bt_\eta)1_{Z(\theta_\af(A))}(d(e^\bt_\eta)) \\
&=\begin{cases}
1_{Z(\theta_\af(A))}(h_{[\bt]\emptyset}(\eta)) &\text{if } \bt=\af ~\text{and}~B \in \eta,\\
0&\text{otherwise}\\
\end{cases} \\
&=\begin{cases}
1 &\text{if } \bt=\af, B \in \eta  ~\text{and}~ \theta_\af(A) \in h_{[\af]\emptyset}(\eta),\\
0&\text{otherwise}\\
\end{cases} \\
&=\begin{cases}
1 &\text{if } \bt=\af ~\text{and}~ B \cap \theta_\af(A) \in \eta,\\
0&\text{otherwise},\\
\end{cases}
\end{align*}
where the last equality follows from the fact that $\theta_\af(A) \in h_{[\af]\emptyset}(\eta) \iff \theta_\af(A) \in \eta $, and that $B, \theta_\af(A) \in \eta \iff B\cap \theta_\af(A) \in \eta$. Thus, we have $1_{Z^1(\af,B)}1_{Z(\theta_\af(A))}=1_{Z^1(\af, B \cap \theta_\af(A))}$.
It then follows that
\begin{align*}t^0(1_{Z(A)})t^1(1_{Z^1(\af,B)})&=t^1(\pi_r(1_{Z(A)})1_{Z^1(\af,B)})=t^1(1_{Z^1(\af, B \cap \theta_\af(A))})\\
&=t^1(1_{Z^1(\af,B)}1_{Z(\theta_\af(A))})=t^1(1_{Z^1(\af,B)})t^0(1_{Z(\theta_\af(A))}).
\end{align*}
\item[(iii)] 
 For $\eta \in E^0$, we first see that \begin{align*} & \left\langle 1_{Z^1(\af,B)},1_{Z^1(\af',B')}\right\rangle (\eta) \\
 &=\sum_{e^\bt_\chi \in E^1;d(e^\bt_\chi)=\eta}1_{Z^1(\af,B)}(e^\bt_\chi)1_{Z^1(\af',B')}(e^\bt_\chi)\\
&=\begin{cases}
 1_{Z^1(\af,B)}(e^\bt_\chi)1_{Z^1(\af',B')}(e^\bt_\chi) &\text{if } \af=\af'=\bt, ~\text{and}~ \chi = \eta \cap \CI_\af,\\
0&\text{otherwise}
\end{cases}\\
&=\begin{cases}
1  &\text{if }  \af=\af'=\bt, \  B, B' \in \chi  ~\text{and}~  \chi = \eta \cap \CI_\af, \\
0&\text{otherwise},
\end{cases}
\end{align*}
where we use Lemma \ref{A} for the second equality. Since $B, B' \in \chi \iff B \cap B' \in \chi \iff B \cap B' \in \eta$, we have 
$\left\langle 1_{Z^1(\af,B)},1_{Z^1(\af',B')}\right\rangle = \delta_{\af,\af'}1_{Z(B \cap B')}$.
Thus, it follows that $$t^1(1_{Z^1(\af,B)})^*t^1(1_{Z^1(\af',B')})=t^0 \big(\left\langle 1_{Z^1(\af,B)},1_{Z^1(\af',B')}\right\rangle \big)=\delta_{\af,\af'}t^0(1_{Z(B \cap B')}).$$
\item[(iv)] Lastly, for the last relation, we first prove that $$\pi_r(1_{Z(A)})=\sum_{\af \in \Delta_A} \Theta_{1_{Z^1(\af,\theta_\af(A))}, 1_{Z^1(\af,\theta_\af(A))}}$$
 for $A \in \CB_{reg}$.  
 For $p \in C_d(E^1)$ and $e \in E^1$, 
 we see that 
\begin{align*}&\Big(\sum_{\af \in \Delta_A} \Theta_{1_{Z^1(\af,\theta_\af(A))}, 1_{Z^1(\af,\theta_\af(A))}}\Big)(p)(e) \\
&=\sum_{\af \in \Delta_A} \Big( 1_{Z^1(\af,\theta_\af(A))} \left\langle 1_{Z^1(\af,\theta_\af(A))}, p \right\rangle \Big) (e)  \\
&=\begin{cases}  1_{Z^1(\af,\theta_\af(A))}(e^\af_\eta)\left\langle 1_{Z^1(\af,\theta_\af(A))}, p \right\rangle(d(e^\af_\eta))  &\text{if } e=e^\af_\eta  ~\text{for}~ \af \in \Delta_A,\\
0&\text{otherwise} 
\end{cases} \\
&= \begin{cases}
\sum_{d(e')=d(e^\af_\eta)}1_{Z^1(\af,\theta_\af(A))}(e')p(e')   &\text{if } e=e^\af_\eta  ~\text{for}~ \af \in \Delta_A ~\text{and}~ \theta_\af(A) \in \eta,\\
0&\text{otherwise} \\
\end{cases}\\
&= \begin{cases}
p(e) &\text{if } e=e^\af_\eta  ~\text{for}~ \af \in \Delta_A ~\text{and}~ \theta_\af(A) \in \eta,\\
0&\text{otherwise},\\
\end{cases}
\end{align*} 
where the last equality follows by Lemma \ref{A}. 
Also,  for $p \in C_d(E^1)$ and $e \in E^1$, we observe that 
\begin{align*} & (\pi_r(1_{Z(A)})p)(e) \\ &=\begin{cases}
1_{Z(A)}(r(e))p(e) &\text{if } e \in \operatorname{dom}(r),\\
0&\text{otherwise}\\
\end{cases} \\
&=\begin{cases}
1_{Z(A)}(f_{\emptyset[\af]}(\eta))p(e^\af_\eta) &\text{if } e \in \operatorname{dom}(r); e=e^\af_\eta  ~\text{for}~ \af \in \Delta_A ,\\
0&\text{otherwise}\\
\end{cases} \\
&= \begin{cases}
p(e) &\text{if } e \in \operatorname{dom}(r); e=e^\af_\eta  ~\text{for}~ \af \in \Delta_A ~\text{and}~ \theta_\af(A) \in \eta,\\
0&\text{otherwise.}\\
\end{cases}
\end{align*}
Thus, we have $\pi_r(1_{Z(A)})=\sum_{\af \in \Delta_A} \Theta_{1_{Z^1(\af,\theta_\af(A))}, 1_{Z^1(\af,\theta_\af(A))}}$ for  $A \in \CB_{reg}$.
 
 Now, let $A \in \CB_{reg}$ and choose $\xi \in Z(A)$. Then, $\xi \in E^0_{rg}$ by \cite[Lemma 7.9]{CasK1}.
 Thus,  we have $1_{Z(A)} \in C_0(E^0_{rg})$. It thus follows that 
\begin{align*} t^0(1_{Z(A)}) &= \Phi(\pi_r(1_{Z(A)})) \\
&= \Phi \Big(\sum_{\af \in \Delta_A} \Theta_{1_{Z^1(\af,\theta_\af(A))}, 1_{Z^1(\af,\theta_\af(A))}} \Big) \\
&=\sum_{\af \in \Delta_A} t^1(1_{Z^1(\af,\theta_\af(A))})t^1(1_{Z^1(\af,\theta_\af(A))})^*.
\end{align*}
\end{enumerate}
Thus, there is a $*$-homomorphism
$$\phi: C^*(\CB,\CL,\theta,\CI_\af) \to C^*(t^0, t^1) $$ given by
$$\phi(p_A)=t^0(1_{Z(A)}) ~\text{and}~ 
\phi(s_{\af,B})=t^1(1_{Z^1(\af,B)})$$
for each $A \in \CB, \af \in \CL$ and $B \in \CI_\af$. Then
 for $A\ne\emptyset$, we have $Z(A)\ne\emptyset$, and hence, $t^0(1_{Z(A)})\ne 0$ for $A\ne\emptyset$ by \cite[Proposition 3.6]{Ka2004}. 
 Hence,  by the gauge-invariant uniqueness theorem  (\cite[Corollary 6.2]{CaK2}), we have $\phi$ is injective. 
 
 Since  $\mathcal{O}_E$ is generated by $\{t^0(a),\ t^1(p):a \in C_0(E^0),\ p \in C_d(E^1)\}$ and  
 $\{1_{Z(A)}:A\in\CB\} $ generates $C_0(E^0)$ and $\{1_{Z^1(\alpha,B)}:\alpha\in\CL,\ B\in\CI_\alpha\}$ generates $C_d(E^1)$, 
  we have  $\phi$ is surjective.  Hence, $C^*(\CB,\CL,\theta, \CI_\af) \cong \CO(E_{(\CB,\CL,\theta, \CI_\af)})$. 

(2): Let $\psi$ be a $*$-homomorphism defined on $\CO(E_{(\CB,\CL,\theta, \CI_\af)})$. Then the results easily follow since $\psi(\phi(p_A))=\psi(t^0(1_{Z(A)}))$ for $A \in \CB$.  
\end{proof}

Let $\xi \in X_{\emptyset}$ be  such that  $\xi \cap \CI_\af \neq \emptyset$ for some $\af=\af_1\af_2 \cdots \af_n \in \CW^*$. Define 
\begin{align*} \xi_n &:=\xi \cap \CI_{\af_n} , \\
\xi_{i}&:=f_{\emptyset[\af_{i+1}]}(\xi_{i+1}) \cap \CI_{\af_{i}}
\end{align*}
for $i=1, \cdots, n-1$.
Then we have  a path $e^{\af_1}_{\xi_1}   \cdots e^{\af_n}_{\xi_n}$
 in $E$ by \cite[Lemma 3.14]{CasK2}. We write such path for  $e(\af,\xi)$. Note then that 
 $$d(e(\af,\xi))=h_{[\af_n]\emptyset}(\xi_n)=h_{[\af_n]\emptyset}(\xi \cap \CI_{\af_n}) =h_{[\af_n]\emptyset} (g_{(\af_n)\emptyset}(\xi))=\xi$$ 
 and 
  $$r(e(\af,\xi))=f_{\emptyset[\af_{1,n}]}(g_{(\af_{1, n-1})\af_n}(\xi_n))=f_{\emptyset[\af]}(\xi \cap \CI_{\af_n} \cap \CI_\af).$$

\begin{lemma}\label{cycle gives a loop} Let $(\CB,\CL,\theta,\CI_\af)$ be a generalized Boolean dynamical system and let $(\bt,A)$ be a cycle, where $\bt=\bt_1 \cdots \bt_n \in \CL^*$.  Then, for each $\xi \in Z(A)$, the path $e(\bt,\xi)=e^{\bt_1}_{\xi_1}   \cdots e^{\bt_n}_{\xi_n}$ is a loop at $\xi$.
\end{lemma}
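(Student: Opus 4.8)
The plan is to establish the two equalities $d(e(\bt,\xi))=\xi$ and $r(e(\bt,\xi))=\xi$, so that $e(\bt,\xi)$ is a loop at $\xi$. First I would check that $e(\bt,\xi)$ is actually defined, i.e. that $\xi\cap\CI_\bt\neq\emptyset$, and that $d(e(\bt,\xi))=\xi$. Since $A\subseteq A$ we have $A\in\CI_A$, so the cycle condition gives $\theta_\bt(A)=A$; writing $\theta_\bt=\theta_{\bt_{2,n}}\circ\theta_{\bt_1}$ and using $\theta_{\bt_1}(A)\in\CR_{\bt_1}\subseteq\CI_{\bt_1}$, this shows $A=\theta_{\bt_{2,n}}(\theta_{\bt_1}(A))\in\CI_\bt$. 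In particular $\bt\in\CW^*$, and for $\xi\in Z(A)$ we get $A\in\xi\cap\CI_\bt\neq\emptyset$, so $e(\bt,\xi)$ is defined and $d(e(\bt,\xi))=\xi$ by the computation carried out just before the statement of the lemma.

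Next I would use Lemma \ref{range of path} (equivalently, the displayed formula for $r$ above) to write $r(e(\bt,\xi))=f_{\emptyset[\bt]}(\xi\cap\CI_{\bt_n}\cap\CI_\bt)=\{C\in\CB:\theta_\bt(C)\in\xi\cap\CI_{\bt_n}\cap\CI_\bt\}$. Since, exactly as above, $\theta_\bt(C)\in\CI_\bt$ and $\theta_\bt(C)\in\CR_{\bt_n}\subseteq\CI_{\bt_n}$ for every $C\in\CB$, this simplifies to $r(e(\bt,\xi))=\{C\in\CB:\theta_\bt(C)\in\xi\}$, and the task is reduced to showing this set equals $\xi$.

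The key step is the identity $\theta_\bt(C)\cap A=C\cap A$ for all $C\in\CB$: because $\theta_\bt$ is a Boolean homomorphism and $\theta_\bt(A)=A$, we have $\theta_\bt(C)\cap A=\theta_\bt(C\cap A)$, and since $C\cap A\subseteq A$ gives $C\cap A\in\CI_A$, the cycle condition yields $\theta_\bt(C\cap A)=C\cap A$. Using that $\xi$ is an ultrafilter with $A\in\xi$ (so $\xi$ is closed under finite intersections and upward closed), for every $C\in\CB$ one then has the chain $C\in\xi\iff C\cap A\in\xi\iff \theta_\bt(C)\cap A\in\xi\iff \theta_\bt(C)\in\xi$, whence $r(e(\bt,\xi))=\xi=d(e(\bt,\xi))$. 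I do not expect a genuine obstacle: the only delicate point is that $\CB$ need not be unital, so one cannot argue via complements inside $\xi$, but the identity $\theta_\bt(C)\cap A=C\cap A$ — which only involves the behaviour of the cycle below $A$ — sidesteps this, and everything else (that $A\in\CI_\bt$, the well-definedness of $e(\bt,\xi)$, the formula for $r$) is a routine unwinding of the definitions together with Lemma \ref{range of path}.
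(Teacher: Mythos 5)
Your proof is correct and follows essentially the same route as the paper: both reduce the claim, via the formula $r(e(\bt,\xi))=f_{\emptyset[\bt]}(\xi\cap\CI_{\bt_n}\cap\CI_\bt)$, to showing that $\theta_\bt$ preserves membership in $\xi$ using the cycle condition on subsets of $A$. The only cosmetic difference is that the paper verifies just the inclusion $\xi\subseteq r(e(\bt,\xi))$ and then appeals to maximality of ultrafilters, whereas you prove both inclusions directly through the identity $\theta_\bt(C)\cap A=C\cap A$.
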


\begin{proof} We show that $f_{\emptyset[\bt]}(\xi \cap \CI_{\bt_n} \cap \CI_\bt)=\xi$.  Choose $B \in \xi$. Since $A \in \xi$ and $(\bt,A)$ is a cycle, we have $\theta_\bt(A \cap B)=A \cap B \in \xi$, and hence, $\theta_\bt( B) \in  \xi$. 
It is clear that  $\theta_\bt(B) \in \CI_{\bt_n} \cap \CI_{\bt}$. So, $B \in  f_{\emptyset[\bt]}(\xi \cap \CI_{\bt_n} \cap \CI_{\bt})$. Thus, $\xi \subseteq f_{\emptyset[\bt]}(\xi \cap \CI_{\bt_n} \cap \CI_\bt)$. Then the equality follows since both are ultrafilters. 
\end{proof}

\begin{proposition}\label{(L) equiv topologically free} Let $(\CB,\CL,\theta,\CI_\af)$ be a generalized Boolean dynamical system and let $E_{(\CB,\CL,\theta,\CI_\af)}=(E^0,E^1, d,r)$ be the associated partially defined topological graph. Then $(\CB,\CL,\theta)$  satisfies Condition (L)  if and only if $E_{(\CB,\CL,\theta,\CI_\af)}$ is topologically free.
\end{proposition}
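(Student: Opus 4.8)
The plan is to prove the two implications separately, translating between the dynamical notion of ``cycle with no exits'' in $(\CB,\CL,\theta)$ and the graph-theoretic notion of ``loop without entrances'' in $E=E_{(\CB,\CL,\theta,\CI_\af)}$, using Lemma~\ref{cycle gives a loop} and the explicit description of $d$, $r$ and the basic open sets $Z(A)$, $Z^1(\af,B)$. Recall that topological freeness of $E$ means the set of base points of loops without entrances has empty interior, and that Condition~(L) means $(\CB,\CL,\theta)$ has no cycle with no exits; so the proposition is really the contrapositive equivalence: $(\CB,\CL,\theta)$ has a cycle with no exits $\iff$ the set of base points of loops without entrances in $E$ has nonempty interior.

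For the direction ``not Condition~(L) $\Rightarrow$ not topologically free'': suppose $(\bt,A)$ is a cycle with no exits, $\bt=\bt_1\cdots\bt_n$, $A\in\CB\setminus\{\emptyset\}$. For each $\xi\in Z(A)$, Lemma~\ref{cycle gives a loop} gives a loop $e(\bt,\xi)=e^{\bt_1}_{\xi_1}\cdots e^{\bt_n}_{\xi_n}$ at $\xi$. I would then show this loop is without entrances, i.e.\ $r^{-1}(r(e^{\bt_k}_{\xi_k}))=\{e^{\bt_k}_{\xi_k}\}$ for each $k$. Here the ``no exits'' hypothesis is used: for any $\emptyset\ne B\subseteq\theta_{\bt_{1,t}}(A)$ one has $B\in\CB_{reg}$ and $\Delta_B=\{\bt_{t+1}\}$, which forces, at the level of ultrafilters, that any edge $e^{\gm}_{\chi}$ with $r(e^\gm_\chi)$ equal to the relevant vertex must have $\gm=\bt_{k}$ (uniqueness of the label) and $\chi$ determined (uniqueness of the ultrafilter, via Lemma~\ref{A} and the fact that $r$ composed with the inverse maps $g,h$ pins down $\chi$). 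One also needs to check that such loops actually exist, i.e.\ $Z(A)\ne\emptyset$, which holds since $A\ne\emptyset$. Finally, the set of base points contains $\{\xi\in X_\emptyset: \text{(the constructed loop sits over }\xi)\}$, which I would identify as containing a nonempty basic open set of the form $Z(B)$ for a suitable $\emptyset\ne B\subseteq A$ (using that along a cycle with no exits every nonempty subset of $\theta_{\bt_{1,t}}(A)$ is regular with a one-element $\Delta$, so the ``loop without entrances'' property propagates to a whole neighborhood). Hence the set of base points has nonempty interior.

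For the converse ``not topologically free $\Rightarrow$ not Condition~(L)'': suppose the set of base points of loops without entrances contains a nonempty open set, hence contains some $Z(C)$ with $C\ne\emptyset$. Pick $\xi\in Z(C)$ and a loop $\mu=e^{\af_1}_{\eta_1}\cdots e^{\af_n}_{\eta_n}$ without entrances based at $\xi$, so $d(\mu)=r(\mu)=\xi$. Using Lemma~\ref{range of path} and the formulas for $d$ and $r$, I would extract from $\mu$ a word $\af=\af_1\cdots\af_n\in\CL^*\setminus\{\emptyset\}$ and a set $A\in\xi$ (something like $A=C\cap\theta_{\af}(C')$ for appropriate $C'$, chosen small enough to lie in all the relevant ideals $\CI_{\af_{1,k}}$) such that $(\af,A)$ is a cycle: the loop condition $d(\mu)=r(\mu)$ says $\theta_\af$ fixes the ultrafilter $\xi$, and shrinking to a set $A$ that ``witnesses'' this fixing on the Boolean-algebra level gives $\theta_\af(B)=B$ for all $B\in\CI_A$. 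Then I would argue $(\af,A)$ has no exits: the ``without entrances'' condition $r^{-1}(r(e^{\af_k}_{\eta_k}))=\{e^{\af_k}_{\eta_k}\}$ translates, for each intermediate vertex, into: the only label $\gm$ with $\theta_\gm$ mapping something nontrivially into that vertex is $\af_{k}$, and the relevant sets are regular (finitely many, nonzero edges) — which is exactly $\Delta_B=\{\bt_{t+1}\}$ and $B\in\CB_{reg}$ for every $\emptyset\ne B\subseteq\theta_{\af_{1,t}}(A)$. So $(\af,A)$ is a cycle with no exits, contradicting Condition~(L).

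The main obstacle I expect is the bookkeeping in the converse direction: passing from an ultrafilter-level statement ($\theta_\af$ fixes $\xi$, and $r^{-1}$ of each intermediate vertex is a singleton) down to a Boolean-algebra-level statement about a single set $A$ and all $B\in\CI_A$, while simultaneously arranging $A$ and the intermediate images $\theta_{\af_{1,k}}(A)$ to lie in the ideals $\CI_{\af_{1,k}}$ so that the edges $e^{\af_k}_{\cdot}$ are actually defined and the path formulas apply. This requires choosing $A$ as a sufficiently small element of $\xi$ (an intersection of finitely many sets, one for each $k\le n$ controlling both the ``fixed by $\theta_\af$'' and ``membership in $\CI_{\af_{1,k}}$'' requirements), and then checking that regularity and the singleton-$\Delta$ condition, which hold locally near $\xi$ because the edge-set over each vertex is a single point, genuinely pass to all nonempty subsets of $\theta_{\af_{1,t}}(A)$ — i.e.\ that the ``no exits'' conclusion is not just generic but holds for every small enough set. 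The forward direction is comparatively routine once Lemma~\ref{cycle gives a loop}, Lemma~\ref{range of path} and Lemma~\ref{A} are in hand, together with the earlier lemma that cyclic permutations of a cycle with no exits are again cycles.
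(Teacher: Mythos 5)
Your first direction (a no-exit cycle yields a nonempty open set of base points of loops without entrances) follows the paper's route via Lemma~\ref{cycle gives a loop}, and its conclusion is right, but the justification you give for uniqueness of the entering edge is off: Lemma~\ref{A} concerns the domain map $d$, and the range map does not in general determine the ultrafilter (distinct $\eta\ne\zeta$ in $X_\af$ can satisfy $f_{\emptyset[\af]}(\eta)=f_{\emptyset[\af]}(\zeta)$). What actually forces $\zeta=\xi_k$ when the labels agree is the no-exit hypothesis used through the lemma that cyclic permutations of a no-exit cycle are again cycles, pushing a putative separating set all the way around the loop; that argument still has to be written.

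The genuine gap is in the converse. From a single loop $\mu$ without entrances based at one ultrafilter $\xi$ you propose to ``shrink to a set $A$ witnessing the fixing on the Boolean-algebra level'' so that $\theta_\af(B)=B$ for all $B\in\CI_A$. No such shrinking is available: $d(\mu)=r(\mu)$ only says that $\theta_\af$ fixes the ultrafilter $\xi$, and ultrafilter-level recurrence does not produce Boolean-level fixed sets (this is precisely the difference between ultrafilter cycles and cycles that separates Condition (K) from Condition (L) in this theory). To prove $\theta_\bt(B)=B$ for every $B\subseteq A$ one must test with \emph{other} ultrafilters, namely ones containing $B\setminus\theta_\bt(B)$ or $\theta_\bt(B)\setminus B$, and the contradiction only closes if those ultrafilters, and the ranges of the associated paths $e(\bt,\cdot)$, are base points of simple loops without entrances of the \emph{same} length $n=|\bt|$; otherwise the ``unique path of length $n$ ending at this vertex'' has a source that need not equal its range. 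Your hypothesis only gives that each point of $Z(C)$ carries a loop without entrances of some unspecified length, so the uniqueness argument does not go through, and the same uniformity is needed for the no-exit verification (which also quantifies over all small $B$, hence over ultrafilters other than $\xi$). The missing idea is the paper's first step: a Baire category argument producing a single $n$ and a nonempty $Z(A)$ every point of which is the base point of a simple loop of length $n$ without entrances; with that uniformity the pair $(\bt, A\cap\theta_\bt(A))$ is shown to be a cycle with no exits exactly by the ultrafilter tests sketched above.
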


\begin{proof} ($\Rightarrow$)  Suppose that $E_{(\CB,\CL,\theta,\CI_\af)}$ is not topologically free.
It then follows from the Baire category theorem that there is a positive integer $n$ and $A \in \CB$ such that $Z(A)$ is nonempty and each $ Z(A)$ is a base point of a simple loop of length $n$ with no entrances.
Let $\eta \in Z(A)$. Then there is a simple loop $\mu:=e^{\bt_1}_{\eta_1} \cdots e^{\bt_n}_{\eta_n}$ such that $r(\mu)=d(\mu)=\eta$. 
Put $\bt:=\bt_1 \cdots \bt_n$.
 We claim that $(\bt, A \cap \theta_\bt(A))$ is a cycle with no exit.
Let $B \subseteq A \cap \theta_\bt(A)$. 
If $B \setminus \theta_\bt(B) \neq \emptyset$, choose $\xi \in \WCB$ such that $B \setminus \theta_\bt(B) \in \xi$. Then, $B, A, \theta_\bt(A) \in \xi$ and $\theta_\bt(B) \notin \xi$. Consider the path $e(\bt,\xi)$. 
Then $d(e(\bt,\xi))=\xi \in Z(A)$. Also, since $\theta_\bt(A) \in \xi \cap \CI_{\bt_n} \cap \CI_\bt$, we have  $A \in r(e(\bt,\xi)) (=f_{\emptyset[\bt]}(\xi \cap \CI_{\bt_n} \cap \CI_\bt))$, and hence, $r(e(\bt,\xi)) \in Z(A)$.
 Since each element in $Z(A)$ is a base point of a simple loop of length $n$ with no entrances, we must have that 
   $d(e(\bt,\xi))=r(e(\bt,\xi))(=\xi)$.  
   Hence,  $B \in r(e(\bt,\xi))$. It means that
   $\theta_\bt(B) \in \xi$, a contradiction. So,  $B \setminus \theta_\bt(B) = \emptyset$. Thus, $B \subseteq \theta_\bt(B)$.
 
 If $\theta_\bt(B) \setminus B \neq \emptyset$, choose $\xi \in \WCB$ such that $\theta_\bt(B) \setminus B  \in \xi$. Then, $ \theta_\bt(B) \in \xi$ and $B \notin \xi$. 
Consider again the path $e(\bt,\xi)$.  
Since $ \theta_\bt(B) \in \xi$, we have  $r(e(\bt,\xi)) \in Z(B) \subseteq Z(A)$.  So, $r(e(\bt,\xi))$ is the base point of a loop of length $n$ with no entrances. 
It means that  $r(e(\bt,\xi))$ is the range of a unique loop of length $n$.
 Since $e(\bt,\xi)$ is a path of length $n$ with range $r(e(\bt,\xi))$ and domain  $\xi$, it follows that $\xi=r(e(\bt,\xi))$, and hence, $B \in \xi$. This is not the case. So, $\theta_\bt(B) \setminus B = \emptyset$. Thus, $B =\theta_\bt(B)$. 
 So, $(\bt, A \cap \theta_\bt(A))$ is a cycle. 
 
 Suppose $k \in \{1,2, \cdots, n\}$, $ \emptyset \neq B \subseteq \theta_{\bt_{1,k}}(A \cap \theta_\bt(A))$ and $\af \in \Delta_B$. Then $\theta_\af(B) \neq \emptyset$, so there is a $\zeta \in \WCB$ such that $\theta_\af(B) \in \zeta$. Since $\theta_\af(B) \subseteq \theta_{\bt_{1,k}\af}(A \cap \theta_\bt(A))$, we have $\theta_{\bt_{1,k}\af}(A \cap \theta_\bt(A)) \in \zeta$. So, $\zeta \cap \CI_{\bt_{1,k}\af} \neq \emptyset$, thus we have the path $e(\bt_{1,k}\af, \zeta)$.  
Then, $r(e(\bt_{1,k}\af, \zeta)) \in Z(A \cap \theta_\bt(A)) \subset Z(A)$. Hence, $\chi:=r(e(\bt_{1,k}\af, \zeta))$ is a base point of a simple loop of length $n$ with no entrances. 
On the other hand, since $(\bt, A \cap \theta_\bt(A))$ is a cycle,  $\chi$ admits a loop $e(\bt, \chi)$  by Lemma \ref{cycle gives a loop}. That means that 
 $$e^{\bt_1}_{\chi_1} \cdots e^{\bt_k}_{\chi_k}e^{\bt_{k+1}}_{\chi_{k+1}}$$
 is the unique path in $E$ of length $k+1$ with range $\chi$. 
  Since $e(\bt_{1,k}\af, \zeta)$ is also a path in $E$ of length $k+1$ with range $\chi$, it follows that $\chi_i=\zeta_i$ for $i=1, \cdots, k+1$ and $\af=\bt_{k+1}$. This shows that the cycle $(\bt, A \cap \theta_\bt(A))$ has no exit. 
 We thus have that $(\CB,\CL,\theta)$ does not satisfy Condition (L).
 
($\Leftarrow$)  Assume that $(\CB,\CL,\theta)$  does not satisfy Condition (L).
There is then a cycle $(\bt,A)$ with no exit, where $\bt=\bt_1 \cdots \bt_n$. 
We claim that each element of $Z(A)$ is the base point of a loop without entrances. 
Suppose $\xi \in Z(A)$. Then by Lemma \ref{cycle gives a loop}(i), 
we have a loop  $e(\bt,\xi)=e^{\bt_1}_{\xi_1}   \cdots e^{\bt_n}_{\xi_n}$ at $\xi$.
If the loop $e(\bt,\xi)$ has an entrance, then there exist $k \in \{1, 2, \cdots, n\}$ and $e^\af_\zeta \in E^1$ $(\af \in \CL, \zeta \in X_\af)$ such that $e^\af_\zeta \neq e^{\bt_k}_{\xi_k}$ and  $r(e^\af_\zeta)=r(e^{\bt_k}_{\xi_k})$.
Here, we claim that if $ \af =\bt_k$, then $ \zeta=\xi_k$.
Since $r(e^{\bt_k}_\zeta)=r(e^{\bt_k}_{\xi_k})$, we have $r(e^{\bt_1}_{\xi_1} \cdots e^{\bt_{k-1}}_{\xi_{k-1}}e^{\bt_k}_{\zeta})=r(e^{\bt_1}_{\xi_1} \cdots e^{\bt_{k-1}}_{\xi_{k-1}}e^{\bt_k}_{\xi_k}),$
which means that 
\begin{align}\label{range}f_{\emptyset[\bt_{1,k}]}(g_{(\bt_{1,k-1})\bt_k}(\zeta))=f_{\emptyset[\bt_{1,k}]}(g_{(\bt_{1, k-1})\bt_k}(\xi_k)).\end{align}
We first show that 
for every $B \subseteq \theta_{\bt_1 \cdots \bt_k}(A)$, if $B \in \xi_k$, then $B \in \zeta$.
If  $B \in \xi_k=f_{\emptyset[\bt_{k+1}]}(\xi_{k+1}) \cap \CI_{\bt_k}$ for $B \subseteq \theta_{\bt_1 \cdots \bt_k}(A)$, then 
$$\theta_{\bt_{k+1}}(B) \in \xi_{k+1}=f_{\emptyset[\bt_{k+2}]}(\xi_{k+2}) \cap \CI_{\bt_{k+1}},$$ 
and then, 
$$\theta_{\bt_{k+1}\bt_{k+2}}(B) \in \xi_{k+2}=f_{\emptyset[\bt_{k+3}]}(\xi_{k+3}) \cap \CI_{\bt_{k+2}}.$$
Continuing this process, one has that 
 $\theta_{\bt_{k+1,n}}(B) \in \xi$. Since $\xi=f_{\emptyset[\bt_{1,k}]}(g_{(\bt_{1, k-1})\bt_k}(\zeta))$ and  $(\bt_{k+1,n}\bt_{1,k}, \theta_{\bt_1 \cdots \bt_k}(A))$ is a cycle,
we have 
$$ B=\theta_{\bt_{k+1,n}\bt_{1,k}}(B) = \theta_{\bt_{1,k}}(\theta_{\bt_{k+1,n}}(B)) \in \zeta.$$
Now, if $\zeta \neq \xi_k$, then there is $B \in \CI_{\bt_k}$ such that $B \in \xi_k$ and $B \notin \zeta$. So, we have $B \cap \theta_{\bt_{1,k}}(A) \in \xi_k$. Since $B \cap \theta_{\bt_{1,k}}(A) \subset \theta_{\bt_{1,k}}(A)$, we have $B \cap \theta_{\bt_{1,k}}(A) \in \zeta$. It then follows that $B \in \zeta$, a contradiction. 
Thus, $\zeta = \xi_k$ if $\af =\bt_k$.

Hence, if $e^\af_\zeta \neq e^{\bt_k}_{\eta_k}$, we have $\af \neq \bt_k$. 
Since  $\theta_{\bt_{1,k}}(A) \in \xi_k$
and
$$f_{\emptyset[\bt_{1,k-1}\af]}(g_{(\bt_{1, k-1})\af}(\zeta))=f_{\emptyset[\bt_{1,k}]}(g_{(\bt_{1, k-1})\bt_k}(\xi_k)),$$
we have  $A \in f_{\emptyset[\bt_{1,k}\af]}(g_{(\bt_{1,k-1})\af}(\zeta))$.  It means that  $\theta_{\bt_{1,k-1}\af}(A) \in \zeta \cap \CI_{\bt_{1,k-1}\af}$. So, 
 $ \theta_{\bt_{1,k-1}\af}(A) \neq \emptyset$. Thus, $\af \in \Delta_{ \theta_{\bt_{1,k-1}}(A)}$. This contradicts to the fact that $(\bt,A)$ is a cycle with no exits. So, the loop $e(\bt,\xi)$ has no entrances.
We thus have that each element of $Z(A)$ is the base point of a loop without entrances, and hence that $E$ is not topologically free. 
 \end{proof}

We are ready to state and prove our Cuntz--Krieger uniqueness theorem for the $C^*$-algebra of a  generalized Boolean dynamical system.

\begin{theorem} \label{thm:CKUT for GBDS}
Let $(\CB,\CL,\theta,\CI_\alpha)$ be a generalized Boolean dynamical system. Then the following are equivalent.
\begin{enumerate}
	\item $(\CB,\CL,\theta)$ satisfies Condition (L).
	\item If $C$ is $C^*$-algebra and $\rho:C^*(\CB,\CL,\theta,\CI_\alpha)\to C$ is a $*$-homomorphism, then $\rho$ is injective if and only if $\rho(p_A)\ne 0$ for each $A\in\CB\setminus\{\emptyset\}$.
		\item If $C$ is $C^*$-algebra and $\rho:C^*(\CB,\CL,\theta,\CI_\alpha)\to C$ is a $*$-homomorphism, then $\rho$ is injective if and only if $\rho(s_{\af,A}s_{\af,A}^*)\ne 0$ for all $\af \in \CL^*$ and all $A\in\CI_\af \setminus\{\emptyset\}$.
	\item Every non-zero ideal of $C^*(\CB,\CL,\theta,\CI_\alpha)$ contains $p_A$ for some $A\in\CB\setminus\{\emptyset\}$.

\end{enumerate}
\end{theorem}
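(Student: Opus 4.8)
The plan is to derive Theorem~\ref{thm:CKUT for GBDS} from the uniqueness theorem for the associated partially defined topological graph (the unnumbered Cuntz--Krieger uniqueness theorem for $\CO(E)$ stated in Subsection~2.9) together with Propositions~\ref{isom} and~\ref{(L) equiv topologically free}. The scheme is a cycle of implications $(1)\Rightarrow(2)\Rightarrow(4)\Rightarrow(1)$ and separately $(2)\Leftrightarrow(3)$, or more economically $(1)\Rightarrow(2)$, $(1)\Rightarrow(4)$, and then the contrapositives $\neg(1)\Rightarrow\neg(2)$, $\neg(1)\Rightarrow\neg(4)$, $\neg(1)\Rightarrow\neg(3)$; I would prefer the round-robin $(1)\Rightarrow(2)\Rightarrow(3)\Rightarrow(4)\Rightarrow(1)$ where possible, using $(3)\Rightarrow(2)$ via $s_{\emptyset,A}s_{\emptyset,A}^*=p_A$ (taking $\af=\emptyset$, noting $\CI_\emptyset=\CB$) and $(2)\Rightarrow(3)$ via the observation that $s_{\af,A}s_{\af,A}^*$ is a projection dominated in $K$-theoretic support by $p_A$: indeed $s_{\af,A}^*s_{\af,A}=p_A$ by relation (iii) extended to multi-indices, so $\rho(s_{\af,A}s_{\af,A}^*)=0$ forces $\rho(p_A)=\rho(s_{\af,A}^*s_{\af,A})=0$, and conversely if $\rho$ is injective each nonzero $p_A$ has nonzero image hence each $s_{\af,A}s_{\af,A}^*$ (whose range is a nonzero sub-projection of $p_{r\text{-vertex}}$, more precisely $s_{\af,A}s_{\af,A}^*\leq p_{\theta_{\af_1}(\cdots)}$ and is Murray--von Neumann equivalent to $p_A\neq 0$) has nonzero image.

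First I would set up the translation: by Proposition~\ref{isom}(1) fix the isomorphism $\phi:C^*(\CB,\CL,\theta,\CI_\alpha)\xrightarrow{\ \cong\ }\CO(E)$ with $E=E_{(\CB,\CL,\theta,\CI_\af)}$, so that any $*$-homomorphism $\rho$ on $C^*(\CB,\CL,\theta,\CI_\alpha)$ corresponds to a $*$-homomorphism $\psi:=\rho\circ\phi^{-1}$ on $\CO(E)$, and conversely. For $(1)\Rightarrow(2)$: assume Condition (L); by Proposition~\ref{(L) equiv topologically free}, $E$ is topologically free, so by the uniqueness theorem for $\CO(E)$ the composite $\psi$ is injective as soon as $\psi\circ t^0$ is injective; by Proposition~\ref{isom}(2), $\psi\circ t^0$ is injective iff $\psi(\phi(p_A))=\rho(p_A)\neq 0$ for all $A\in\CB\setminus\{\emptyset\}$; since $\phi$ is an isomorphism, injectivity of $\psi$ is equivalent to injectivity of $\rho$. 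This gives the non-trivial direction of (2); the reverse direction (if $\rho$ injective then $\rho(p_A)\neq 0$ for $A\neq\emptyset$) is immediate because $p_A\neq 0$ in the universal algebra (again $Z(A)\neq\emptyset$, or directly from the gauge-invariant uniqueness theorem).

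For $(2)\Rightarrow(4)$: if $I$ is a non-zero ideal, apply (2) to the quotient map $\rho:C^*(\CB,\CL,\theta,\CI_\alpha)\to C^*(\CB,\CL,\theta,\CI_\alpha)/I$; since $\rho$ is not injective, its contrapositive form yields some $A\in\CB\setminus\{\emptyset\}$ with $\rho(p_A)=0$, i.e.\ $p_A\in I$. For $(4)\Rightarrow(1)$ I argue by contraposition: suppose $(\CB,\CL,\theta)$ fails Condition (L), so there is a cycle $(\bt,A)$ with no exit. Passing (if necessary) to a convenient sub-word as in the Lemma following the definition of Condition (L) and using the analysis in the proof of Proposition~\ref{cyclic maximal tails} --- specifically that a no-exit cycle produces a corner of the form $pC^*(\CB,\CL,\theta,\CI_\af)p\cong M_n(C(\T))$, which is not simple --- one obtains a proper non-zero ideal $I$ of $C^*(\CB,\CL,\theta,\CI_\alpha)$ with $\CH_I=\{B\in\CB: p_B\in I\}=\{\emptyset\}$, so $I$ contains no $p_B$ with $B\neq\emptyset$, contradicting (4). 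Alternatively and more cleanly, invoke topological freeness: $\neg(1)$ gives via Proposition~\ref{(L) equiv topologically free} that $E$ is not topologically free, hence by part (3) of the uniqueness theorem for $\CO(E)$ there is a non-zero ideal $J$ of $\CO(E)$ with $J\cap t^0(C_0(E^0))=0$; transporting through $\phi^{-1}$ gives a non-zero ideal $I$ of $C^*(\CB,\CL,\theta,\CI_\alpha)$ with $I\cap\overline{\operatorname{span}}\{p_A:A\in\CB\}=0$, so $I$ contains no non-zero $p_A$. This last route makes $(4)\Rightarrow(1)$ a one-line consequence of the graph uniqueness theorem and is the approach I would actually write.

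The main obstacle is bookkeeping around the edge cases in the dictionary between the two pictures --- confirming that $\phi$ carries $\overline{\operatorname{span}}\{p_A:A\in\CB\}$ onto (a dense subset of, hence the closure equals) $t^0(C_0(E^0))$ so that "$I\cap t^0(C_0(E^0))=0$" transports exactly to "$I$ contains no non-zero $p_A$", and handling the non-unital case where $E^0=X_\emptyset$ is only locally compact. These are routine given \cite{CasK1} and Proposition~\ref{isom}, but they are where care is needed; once this correspondence is nailed down, each implication is a short application of an already-proved result, and the genuine mathematical content lives entirely in Propositions~\ref{isom}, \ref{(L) equiv topologically free} and the cited uniqueness theorem for partially defined topological graphs.
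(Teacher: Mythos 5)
Your proposal is correct and takes essentially the same route as the paper: both pass to the partially defined topological graph via Proposition~\ref{isom} and Proposition~\ref{(L) equiv topologically free}, invoke the uniqueness theorem for $\CO(E)$, and handle $(2)\Leftrightarrow(3)$ and $(2)\Leftrightarrow(4)$ by the same elementary computations (the $\af=\emptyset$ identification $s_{\emptyset,A}s_{\emptyset,A}^*=p_A$ and the observation that $\rho(s_{\af,A}s_{\af,A}^*)=0$ forces $\rho(p_A)=0$). The only inessential differences are that you close the cycle with $(4)\Rightarrow(1)$ via the ideal-intersection form (part (3)) of the graph uniqueness theorem, whereas the paper uses its biconditional with injective Cuntz--Krieger pairs to get $(1)\Leftrightarrow(2)$ directly, and that your labelling of which computation serves which half of $(2)\Leftrightarrow(3)$ is swapped, though both needed ingredients appear.
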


\begin{proof} 
 (1)$\iff$(2): Let $\phi:C^*(\CB,\CL,\theta,\CI_\alpha)\to\CO(E_{(\CB,\CL,\theta,\CI_\alpha)})$ be the isomorphism from Proposition~\ref{isom}. Then $\psi\mapsto\psi\circ\phi$ is a bijection between the class of $*$-homomorphisms defined on $\CO(E_{(\CB,\CL,\theta,\CI_\alpha)})$ and the class of $*$-homomorphisms defined on $C^*(\CB,\CL,\theta,\CI_\alpha)$ sucha tht $\psi\circ t^0$ is injective if and only if $\psi(\phi(p_A))\ne 0$ for all $A\in\CB$. The map $\psi\mapsto (\psi\circ t^0,\psi\circ t^1)$ is a bijection between the class of $*$-homomorphisms defined on $C^*(\CB,\CL,\theta,\CI_\alpha)$ such that $\psi\circ t^0$ is injective and the class of injective Cuntz--Krieger $E_{(\CB,\CL,\theta,\CI_\alpha)}$-pairs. The results therefore follows from Proposition~\ref{(L) equiv topologically free} and \cite[Theorem 6.14]{Ka2006}.

(2)$\implies$(3): The ``only if'' part is clear. To prove the ``if'' part, assume 
$\rho(s_{\af,A}s_{\af,A}^*)\ne 0$ for all $\af \in \CL^*$ and all $\emptyset \neq A\in\CI_\af$. Taking $\af =\emptyset$, we have $\rho(p_A)=\rho(s_{\emptyset,A}s_{\emptyset,A}^*) \neq 0$ for all $\emptyset \neq A\in\CI_\emptyset (=\CB)$.
Thus, by (2), $\rho$ is injective.

(3)$\implies$(2): The ``only if'' part is trivial. To prove the ``if'' part, suppose 
$\rho(p_A)\ne 0$ for each $A\in\CB\setminus\{\emptyset\}$. We show that 
$\rho(s_{\af,A}s_{\af,A}^*)\ne 0$ for all $\af \in \CL^*$ and all $A\in\CI_\af \setminus\{\emptyset\}$.
Assume to the contrary that $\rho(s_{\af,A} s_{\af,A}^*) = 0$ for some $\alpha\in\CL^*$ and some $\emptyset \neq A \in \CI_\af$. Then 
$$\rho(p_A)=\rho(s_{\af,A}^*s_{\af,A}s_{\af,A}^*s_{\af,A})=\rho(s_{\af,A}^*)\rho(s_{\af,A}s_{\af,A}^*)\rho(s_{\af,A})=0,$$
a contradiction. So, it follows by (3) that $\rho$ is injective. 

(2)$\implies$(4): Let $I$ be a non-zero ideal of $C^*(\CB,\CL,\theta,\CI_\alpha)$. Then the quotient map from $C^*(\CB,\CL,\theta,\CI_\alpha)$ to $C^*(\CB,\CL,\theta,\CI_\alpha)/I$ is a non-injective $*$-homomorphism. It therefore follows from (2) that $p_A\in I$ for some $A\in\CB\setminus\{\emptyset\}$. 

(4)$\implies$(2): Let $\rho:C^*(\CB,\CL,\theta,\CI_\alpha)\to C$ be a $*$-homomorphism. It is obvious that if $\rho$ is injective, then $\rho(p_A)\ne 0$ for each $A\in\CB\setminus\{\emptyset\}$. Conversely if $\rho(p_A)\ne 0$ for each $A\in\CB\setminus\{\emptyset\}$, then it follows from (3) that $\ker\rho=\{0\}$ and thus that $\rho$ is injective. 
\end{proof}

As a corollary, we get the following strengthening of \cite[Theorem 9.9]{COP} and \cite[Theorem 2.5]{CaK1}.

\begin{corollary}
Let $(\CB,\CL,\theta)$ be a Boolean dynamical system. Then the following three conditions are equivalent.
\begin{enumerate}
 	\item $(\CB,\CL,\theta)$ satisfies Condition (L).
 	\item A $*$-homomorphism $\pi:C^*(\CB,\CL,\theta)\to B$ is injective if and only if $\pi(p_A)\ne 0$ for all $\emptyset \ne A\in\CB$.
 	\item A $*$-homomorphism $\pi:C^*(\CB,\CL,\theta)\to B$ is injective if and only if $\pi(s_\alpha p_A s_\alpha^*)\ne 0$ for every $\alpha\in\CL^*$ and every $\emptyset \ne A\in\CB$ with $A\subseteq\CR_\alpha$.
 \end{enumerate} 
\end{corollary}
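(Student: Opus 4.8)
The plan is to obtain this corollary as the special case of Theorem~\ref{thm:CKUT for GBDS} in which the generalized Boolean dynamical system is the one built on $(\CB,\CL,\theta)$ with the smallest admissible family of ideals, namely $\CI_\af:=\CR_\af$ for $\af\in\CL$. Recall that, with this choice, $C^*(\CB,\CL,\theta)$ is identified with $C^*(\CB,\CL,\theta,\CR_\af)$, the projection $p_A$ corresponding to $p_A$ for $A\in\CB$ and the element $s_\af p_B$ corresponding to $s_{\af,B}$ for $\af\in\CL$ and $B\in\CR_\af$ (see \cite{COP}, resp.\ \cite{CaK2}). Since Condition (L) is a property of the Boolean dynamical system $(\CB,\CL,\theta)$ alone, condition~(1) of the corollary is condition~(1) of Theorem~\ref{thm:CKUT for GBDS} for $(\CB,\CL,\theta,\CR_\af)$; and because the identification fixes the projections, condition~(2) of the corollary is condition~(2) of Theorem~\ref{thm:CKUT for GBDS}.

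The remaining work is to match condition~(3), i.e.\ to check that $s_\af p_As_\af^*$ equals $s_{\af,A}s_{\af,A}^*$ under the identification. Define $\CR_\af$ for a word $\af=\af_1\cdots\af_n\in\CL^*$ in the evident way, so that $\CR_\af=\{A\in\CB:A\subseteq\theta_\af(B)\text{ for some }B\in\CB\}$. First I would verify that $\CR_\af$ coincides with the iterated ideal $\CI_\af$ of $(\CB,\CL,\theta,\CR_\af)$: if $B\in\CR_{\af_1}$ and $A\subseteq\theta_{\af_2\cdots\af_n}(B)$, then $A\subseteq\theta_\af(C)$ for any $C$ with $B\subseteq\theta_{\af_1}(C)$, and conversely $A\subseteq\theta_\af(C)=\theta_{\af_2\cdots\af_n}(\theta_{\af_1}(C))$ with $\theta_{\af_1}(C)\in\CR_{\af_1}$. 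Then, for $A\in\CR_\af$, choosing $B\in\CR_{\af_1}$ with $A\subseteq\theta_{\af_2\cdots\af_n}(B)$ and using the relation $p_Cs_{\bt,D}=s_{\bt,D}p_{\theta_\bt(C)}$ repeatedly, the defining product $s_{\af_1,B}s_{\af_2,\theta_{\af_2}(B)}\cdots s_{\af_n,A}$ telescopes down to $s_{\af_1}\cdots s_{\af_n}p_A=s_\af p_A$; thus $s_{\af,A}=s_\af p_A$, and hence $s_\af p_As_\af^*=(s_\af p_A)(s_\af p_A)^*=s_{\af,A}s_{\af,A}^*$ for every $\af\in\CL^*$ and every $A\in\CR_\af=\CI_\af$ (the hypothesis ``$A\subseteq\CR_\af$'' of the corollary being read as $A\in\CR_\af$). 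So condition~(3) of the corollary is condition~(3) of Theorem~\ref{thm:CKUT for GBDS} for $(\CB,\CL,\theta,\CR_\af)$.

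With this dictionary in hand, the three equivalences of the corollary are exactly the equivalences $(1)\iff(2)\iff(3)$ of Theorem~\ref{thm:CKUT for GBDS} applied to $(\CB,\CL,\theta,\CR_\af)$. I do not expect a genuine obstacle here: all the substance sits in Theorem~\ref{thm:CKUT for GBDS}, and what is left is the routine verification of $\CR_\af=\CI_\af$ and $s_\af p_As_\af^*=s_{\af,A}s_{\af,A}^*$, plus invoking the identification of $C^*(\CB,\CL,\theta)$ with $C^*(\CB,\CL,\theta,\CR_\af)$ in the full generality needed here---arbitrary (not necessarily locally finite, not necessarily countable) Boolean dynamical systems---so one should cite a version of that identification that assumes no extra finiteness hypotheses.
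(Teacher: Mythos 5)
Your proposal is correct and is essentially the paper's own argument: the paper proves the corollary by applying Theorem~\ref{thm:CKUT for GBDS} to the generalized Boolean dynamical system $(\CB,\CL,\theta,\CR_\af)$ and citing the identification of $C^*(\CB,\CL,\theta)$ with $C^*(\CB,\CL,\theta,\CR_\af)$ (via \cite[Example 4.1]{CaK1}), under which $p_A\leftrightarrow p_A$ and $s_\af p_A\leftrightarrow s_{\af,A}$. Your extra verifications (that the iterated ideals $\CI_\af$ equal $\CR_\af$ and that the defining product for $s_{\af,A}$ telescopes to $s_\af p_A$) are exactly the routine details the paper leaves to that citation.
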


\begin{proof} It follows from Theorem~\ref{thm:CKUT for GBDS} and \cite[Example 4.1]{CaK1}. 
\end{proof}

\subsection{A Cuntz--Krieger uniqueness theorem for $C^*(\CB,\CL,\theta,\CI_\af;\CJ)$}\label{CKUT for RGBDS}

We now prove a Cuntz--Krieger uniqueness theorem for the $C^*$-algebras of relative generalized Boolean dynamical systems. 
Given a relative generalized Boolean dynamical system $(\CB,\CL,\theta,\CI_\af;\CJ)$, it is shown in 
\cite{CaK2} that there is 
 a generalized Boolean dynamical system $(\CB', \CL, \theta', \CI_\af')$ such that $C^*(\CB,\CL,\theta,\CI_\af;\CJ)$ is isomorphic to $C^*(\CB', \CL, \theta', \CI_\af')$.
We recall the construction of $(\CB', \CL, \theta', \CI_\af')$ and the isomorphism between  $C^*(\CB,\CL,\theta,\CI_\af;\CJ)$ and $C^*(\CB', \CL, \theta', \CI_\af')$. Then by  applying the Cuntz--Krieger uniqueness theorem (Theorem \ref{CKUT}) of $C^*(\CB', \CL, \theta', \CI_\af')$, we will have our uniqueness theorem. 

Let $(\CB,\CL,\theta,\CI_\af;\CJ)$ be a  relative generalized Boolean dynamical system   and  let 
\[
\CB'=\{(A,[B]_{\CJ}):A,B\in\CB ~\text{and}~ [A]_{\CB_{reg}}=[B]_{\CB_{reg}}\}.
\] 
Define  \begin{align*} 
(A_1,[B_1]_{\CJ})\cup (A_2,[B_2]_{\CJ})&:=(A_1\cup A_2,[B_1\cup B_2]_{\CJ}),\\
(A_1,[B_1]_{\CJ})\cap (A_2,[B_2]_{\CJ})&:=(A_1\cap A_2,[B_1\cap B_2]_{\CJ}),\\
(A_1,[B_1]_{\CJ})\setminus (A_2,[B_2]_{\CJ})&:=(A_1\setminus A_2,[B_1\setminus B_2]_{\CJ}).
\end{align*}
Then $\CB'$ is a Boolean algebra with the least element $ \emptyset:=(\emptyset, [\emptyset]_\CJ)$.
  For $\alpha\in\CL$, if we define $\theta_\alpha':\CB' \to \CB'$ by 
$$
\theta_\alpha'(A,[B]_{\CJ}):=(\theta_\alpha(A),[\theta_\alpha(A)]_{\CJ}),
$$
then $(\CB', \CL,\theta')$ is a Boolean
dynamical system. Note that $$\CB'_{reg}:=\CB'^{(\CB',
\CL,\theta')}_{reg}=\{(A,\emptyset):A\in\CB_{reg}\}.$$
By \cite[Proposition 6.4]{CaK2}, we see that  
the map $\phi:C^*(\CB,\CL,\theta,\CI_\af;\CJ) \to  C^*(\CB', \CL,\theta', \CI_\af'), $ where $\CI_\af':=\{(A,[A]_{\CJ}):A\in\CI_\alpha\}$ for $\alpha\in\CL$, given by 
\[\rho(p_A)=p_{(A,[A]_\CJ)} ~\text{and}~ \phi(s_{\af,B})=s_{\af, (B, [B]_\CJ)}\]
for all $A \in \CB, \af \in \CL$ and $B \in \CI_\af$ is an isomorphism with the inverse map  $\rho:C^*(\CB', \CL,\theta', \CI_\af') \to C^*(\CB,\CL,\theta,\CI_\af;\CJ) $ given by 
$$\rho(p_{(A,[B]_\CJ)})=p_A+p_C-\sum_{\alpha\in\Delta_C}s_{\alpha,\theta_\alpha(C)}s_{\alpha,\theta_\alpha(C)}^*-p_D+\sum_{\alpha\in\Delta_D}s_{\alpha,\theta_\alpha(D)}s_{\alpha,\theta_\alpha(D)}^*,$$ where $C,D\in\CB_{reg}$ are such that $A\cup C=B\cup D$ and $A \cap C =B \cap D =\emptyset$, and $$\rho(s_{\af,(A,[A]_\CJ)})=s_{\af,A}$$
for all $(A,[B]_\CJ) \in \CB', \af \in \CL$ and $(A, [A]_\CJ) \in \CI_\af'$.

\begin{lemma} \label{R(L) iff G(L)} Let $(\CB,\CL,\theta,\CI_\af;\CJ)$ be a relative generalized Boolean dynamical system. Then, $(\CB, \CL,\theta)$ satisfies Condition (L) if and only if $(\CB', \CL,\theta')$ satisfies Condition (L).
\end{lemma}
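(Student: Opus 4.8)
The plan is to prove the equivalence directly by translating cycles back and forth between the two Boolean dynamical systems, using the explicit relationship between $\theta_\beta$ and $\theta'_\beta$. The key observation is that for $\beta = \beta_1\cdots\beta_n \in \CL^*\setminus\{\emptyset\}$, the action $\theta'_\beta$ on $\CB'$ satisfies $\theta'_\beta(A,[B]_\CJ) = (\theta_\beta(A), [\theta_\beta(A)]_\CJ)$, since each $\theta'_{\beta_i}$ discards the second coordinate and replaces it with $[\theta_{\beta_i}(\cdot)]_\CJ$ of the first coordinate. Moreover, because $\CB'_{reg} = \{(A,\emptyset) : A \in \CB_{reg}\}$, a set $(A,[B]_\CJ)$ lies in $\CB'_{reg}$ precisely when $B \in \CJ \subseteq \CB_{reg}$ and $[A]_{\CB_{reg}} = [B]_{\CB_{reg}}$ forces $A \in \CB_{reg}$ as well; and $\Delta^{(\CB',\CL,\theta')}_{(A,[B]_\CJ)} = \{\af : \theta_\af(A) \ne \emptyset\} = \Delta_A$, so the ``out-degree'' data matches up on first coordinates.

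\textbf{Forward direction.} Suppose $(\CB,\CL,\theta)$ does not satisfy Condition (L), so there is a cycle $(\bt,A)$ with no exit, $\bt = \bt_1\cdots\bt_n$, $A \ne \emptyset$. I claim $(\bt,(A,[A]_\CJ))$ is a cycle with no exit in $(\CB',\CL,\theta')$. First, $(A,[A]_\CJ) \ne \emptyset$ since $A \ne \emptyset$. For any $(C,[D]_\CJ) \subseteq (A,[A]_\CJ)$ we have $C \subseteq A$ in $\CB$, hence $\theta_\bt(C) = C$; one then checks $\theta'_\bt(C,[D]_\CJ) = (\theta_\bt(C),[\theta_\bt(C)]_\CJ) = (C,[C]_\CJ)$, and since $C \subseteq A$ gives $D \cup D' = C \cup C''$ for suitable $\CJ$-elements (using $C \subseteq D$ up to $\CJ$ as both coordinates agree modulo $\CB_{reg}$)... actually more carefully: $(C,[D]_\CJ) \subseteq (A,[A]_\CJ)$ means $C \subseteq A$ and $[D]_\CJ \subseteq [A]_\CJ$; we must show $(C,[D]_\CJ) = (C,[C]_\CJ)$, i.e.\ $[D]_\CJ = [C]_\CJ$. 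But $[C]_{\CB_{reg}} = [D]_{\CB_{reg}}$ (membership condition for $\CB'$) and $\CJ \subseteq \CB_{reg}$, and with $D \subseteq A$, $C\subseteq A$... here one uses that $C$ and $D$ differ by a $\CB_{reg}$-set lying below $A$; since $(\bt,A)$ has no exit, every nonempty subset of $A$ below the relevant images is regular, which constrains the difference. The no-exit condition for $(\bt,(A,[A]_\CJ))$ then follows because $\theta'_{\bt_{1,t}}(A,[A]_\CJ) = (\theta_{\bt_{1,t}}(A),[\theta_{\bt_{1,t}}(A)]_\CJ)$, any nonempty $(B,[B']_\CJ)$ below it has $B \subseteq \theta_{\bt_{1,t}}(A)$ nonempty hence $B \in \CB_{reg}$ with $\Delta_B = \{\bt_{t+1}\}$, which gives $(B,[B']_\CJ) \in \CB'_{reg}$ and $\Delta^{(\CB',\CL,\theta')}_{(B,[B']_\CJ)} = \{\bt_{t+1}\}$.

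\textbf{Converse direction.} Suppose $(\CB',\CL,\theta')$ has a cycle $(\bt,(A,[B]_\CJ))$ with no exit, $(A,[B]_\CJ) \ne \emptyset$, so $A \ne \emptyset$ (if $A = \emptyset$ then $(A,[B]_\CJ) = (\emptyset,[B]_\CJ)$ with $B \in \CB_{reg}$, $[B]_{\CB_{reg}} = [\emptyset]_{\CB_{reg}}$ forces $B \in \CB_{reg}$; but then $(A,[B]_\CJ)$ could be nonempty only if $[B]_\CJ \ne [\emptyset]_\CJ$ — I would need to handle this edge case, arguing such a set cannot carry a no-exit cycle). I claim $(\bt,A)$ is a cycle with no exit in $(\CB,\CL,\theta)$. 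For $C \in \CI_A$ in $\CB$, apply the cycle condition to $(C,[C]_\CJ) \subseteq (A,[B]_\CJ)$ (need $[C]_\CJ \subseteq [B]_\CJ$; since $C \subseteq A$ and $A,B$ agree modulo $\CB_{reg}$, this holds modulo a regular set, and one argues it holds in $\CB'$ after possibly intersecting) to get $\theta'_\bt(C,[C]_\CJ) = (C,[C]_\CJ)$, whose first coordinate gives $\theta_\bt(C) = C$. For no exit: take $t \in \{1,\dots,n\}$ and $\emptyset \ne B_0 \in \CI_{\theta_{\bt_{1,t}}(A)}$; then $(B_0,[B_0]_\CJ)$ is a nonempty set below $\theta'_{\bt_{1,t}}(A,[B]_\CJ) = (\theta_{\bt_{1,t}}(A),[\theta_{\bt_{1,t}}(A)]_\CJ)$, so the $\CB'$-no-exit condition gives $(B_0,[B_0]_\CJ) \in \CB'_{reg}$ with $\Delta^{(\CB',\CL,\theta')} = \{\bt_{t+1}\}$; reading off first coordinates, $B_0 \in \CB_{reg}$ with $\Delta_{B_0} = \{\bt_{t+1}\}$.

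\textbf{The main obstacle} will be the careful bookkeeping of the second coordinates $[\cdot]_\CJ$ and verifying that the ``below'' relation in $\CB'$ transfers cleanly to the ``below'' relation in $\CB$ and back — in particular confirming that for a no-exit cycle, restricting to sets of the form $(C,[C]_\CJ)$ loses no generality because the no-exit hypothesis forces the relevant sets to be regular, pinning down the $\CJ$-class. I expect the cleanest route is to first prove a small auxiliary fact: if $(\bt, (A,[B]_\CJ))$ is a no-exit cycle in $\CB'$, then $[A]_\CJ = [B]_\CJ$, so that the cycle set actually has the form $(A,[A]_\CJ)$; this normalization makes both directions symmetric and the coordinate-chasing routine. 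I would also remark that this lemma could alternatively be deduced from the fact (proven later, presumably) that Condition (L) is a gauge-invariant-ideal-free property together with the isomorphism $C^*(\CB,\CL,\theta,\CI_\af;\CJ) \cong C^*(\CB',\CL,\theta',\CI_\af')$, but the direct combinatorial proof above is more elementary and self-contained.
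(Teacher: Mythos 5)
Your overall strategy (transfer cycles coordinatewise, using $\theta'_\beta(C,[D]_\CJ)=(\theta_\beta(C),[\theta_\beta(C)]_\CJ)$, $\Delta_{(C,[D]_\CJ)}=\Delta_C$ and $\CB'_{reg}=\{(C,[\emptyset]_\CJ):C\in\CB_{reg}\}$) is the same as the paper's, but your forward direction has a genuine gap, and it sits exactly at the step you wave through (``which constrains the difference''). If $(\beta,A)$ is a no-exit cycle in $(\CB,\CL,\theta)$, then $A\in\CB_{reg}$, so $(\emptyset,[A]_\CJ)$ is an element of $\CB'$, it lies below your proposed base $(A,[A]_\CJ)$, it is nonzero whenever $A\notin\CJ$, and $\theta'_\beta$ sends it to $(\emptyset,[\emptyset]_\CJ)$; hence $(\beta,(A,[A]_\CJ))$ is not even a cycle unless $A\in\CJ$. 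Your suggested normalization $[A]_\CJ=[B]_\CJ$ is also not the right one: the no-exit hypothesis forces the base of a cycle into $\CB'_{reg}$, i.e.\ into the form $(A,[\emptyset]_\CJ)$ with $A\in\CB_{reg}$ (this is the paper's normalization). But switching to $(A,[\emptyset]_\CJ)$ does not repair the argument either, since $\theta'_\beta(C,[\emptyset]_\CJ)=(C,[C]_\CJ)\neq(C,[\emptyset]_\CJ)$ for any $C\subseteq A$ with $C\notin\CJ$. In fact, with $\theta'$ as defined, this implication fails outright: take $\CB=\{\emptyset,A\}$, $\CL=\{\alpha\}$, $\theta_\alpha(A)=A$, $\CJ=\{\emptyset\}$ (the relative algebra is then the Toeplitz algebra of a single loop). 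Here $(\alpha,A)$ is a no-exit cycle, while in $\CB'=\{(\emptyset,[\emptyset]_\CJ),(A,[\emptyset]_\CJ),(\emptyset,[A]_\CJ),(A,[A]_\CJ)\}$ the only nonzero fixed point of $\theta'_\alpha$ is $(A,[A]_\CJ)$, whose subset $(A,[\emptyset]_\CJ)$ is moved to $(A,[A]_\CJ)$; so $(\CB',\CL,\theta')$ has no cycles at all and satisfies Condition (L) vacuously (this mirrors the fact that the Toeplitz algebra of a loop is the graph algebra of a graph satisfying Condition (L)). So the obstacle you flagged is not bookkeeping that more care removes. You should also be aware that the paper's own proof of this direction rests on the identity $\theta'_\beta(A',[\emptyset]_\CJ)=(\theta_\beta(A'),[\emptyset]_\CJ)$, which by the stated definition of $\theta'$ holds only if $\theta_\beta(A')\in\CJ$; your hesitation at precisely this point was well founded, and a correct two-sided statement would have to replace Condition (L) for $(\CB,\CL,\theta)$ by a relative version requiring the no-exit cycle to sit inside $\CJ$.

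Your converse direction is the sound half and is essentially the paper's argument, but your test subsets are off: after the correct normalization the base is $(A,[\emptyset]_\CJ)$ with $A\in\CB_{reg}$, and then $(C,[C]_\CJ)$ lies below it only when $C\in\CJ$. You should instead test against $(C,[\emptyset]_\CJ)$ for $C\subseteq A$ (these are automatically in $\CB'$ since $C\subseteq A\in\CB_{reg}$); the cycle hypothesis applied to them yields $\theta_\beta(C)=C$ from the first coordinate, and the no-exit property transfers by reading first coordinates, as you say, because $\Delta_{(C,[D]_\CJ)}=\Delta_C$ and regularity in $\CB'$ is regularity of the first coordinate together with second coordinate $[\emptyset]_\CJ$. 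The edge case $A=\emptyset$ you worried about disappears with this normalization, since a nonzero element of $\CB'_{reg}$ has nonzero first coordinate.
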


\begin{proof}($\Rightarrow$) Assume to the contrary that $(\CB', \CL,\theta')$ does not satisfy Condition (L).
There is then a cycle $(\bt, (A, [B]_\CJ))$ with no exit, where $\bt=\bt_1 \cdots \bt_n$. Since  $(\bt, (A, [B]_\CJ))$ has no exit, it follows that $(A,[B]_\CJ) \in \CB'_{reg}$. So, $A \in \CB_{reg}$ and  $(A, [B]_\CJ)=(A, \emptyset)$.
We claim that $(\bt,A)$ is a cycle with no exit in $(\CB, \CL,\theta)$.
Choose $A' \subseteq A$. Then $(A', \emptyset) \subseteq (A, \emptyset)$, so $(\theta_\bt(A'), \emptyset)=\theta_\bt'(A', \emptyset)=(A', \emptyset)$. Thus, $\theta_\bt(A')=A'$, which means that $(\bt, A)$ is a cycle. 
If $(\bt, A)$ has an exit, there is a $t \leq n$ and a $C\in\CB$ such that $\emptyset \neq C \subseteq \theta_{\bt_{1,t}}(A)$ and $\Delta_C\ne\{\bt_{t+1}\}$ (where $\bt_{n+1}:=\bt_1$). It then easy to see that $\emptyset \neq (C, \emptyset) \subseteq (\theta_{\bt_{1,t}}(A), \emptyset)=\theta'_{\bt_{1,t}}(A, \emptyset)$ and $\Delta_{(C, \emptyset)} \neq \{\bt_{t+1}\}$, which contradicts to the fact that $(\bt, (A, [B]_\CJ))$ has no exit.
Hence, $(\bt,A)$ is a cycle with no exit, a contradiction. Therefore, $(\CB', \CL,\theta')$ satisfies Condition (L).

($\Leftarrow$) Suppose that $(\CB, \CL,\theta)$ does not satisfy Condition (L). Choose a cycle $(\bt,A)$ with no exit, where $\bt=\bt_1 \cdots \bt_n$. Then, $A \in \CB_{reg}$ and $(A, \emptyset) \in \CB_{reg}'$. We claim that $(\bt,(A,\emptyset))$
is a cycle with no exit. 
Let $(A', \emptyset) \subseteq (A, \emptyset)$. Then, $\theta_\bt'(A', \emptyset)=(\theta_\bt(A'), \emptyset)=(A', \emptyset)$. So, $(\bt,(A,\emptyset))$
is a cycle. If $(\bt,(A,\emptyset))$ has an exit, there is 
 a $t \leq n$ and a $(C, \emptyset) \in \CB'$ such that $\emptyset \neq (C, \emptyset) \subseteq (\theta_{\bt_{1,t}}(A), \emptyset)$ and $\Delta_{(C, \emptyset)}\ne\{\bt_{t+1}\}$ (where $\bt_{n+1}:=\bt_1$). Then, $\emptyset \neq C \subseteq \theta_{\bt_{1,t}}(A)$ and $\Delta_C\ne\{\bt_{t+1}\}$, this is not the case since the cycle $(\bt,A)$ has no exit. Thus, the cycle $(\bt,(A,\emptyset))$ has no exit, which is a contradiction. So, $(\CB, \CL,\theta)$ satisfies Condition (L).
\end{proof}

\begin{theorem} \label{thm:CKUT for RGBDS} 
Let $(\CB,\CL,\theta,\CI_\alpha;\CJ)$ be a relative generalized Boolean dynamical system. Then the following are equivalent.
\begin{enumerate}
	\item $(\CB,\CL,\theta)$ satisfies Condition (L).
	\item If $C$ is $C^*$-algebra and $\psi:C^*(\CB,\CL,\theta,\CI_\alpha;\CJ)\to C$ is a $*$-homomorphism, then $\psi$ is injective if and only if the following properties hold:\begin{enumerate}
	\item $\psi(p_A)\neq 0$ for all $\emptyset \neq A\in\CB$, 
	\item $\psi(p_B -\sum_{\af \in \Delta_B}s_{\af, \theta_\af(B)}s_{\af, \theta_\af(B)}^*) \neq 0$ for all $\emptyset \neq B \in \CB_{reg} \setminus \CJ$.
	\end{enumerate}
	\end{enumerate}
\end{theorem}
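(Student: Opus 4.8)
The plan is to reduce the statement to Theorem~\ref{thm:CKUT for GBDS} via the isomorphism $\phi\colon C^*(\CB,\CL,\theta,\CI_\af;\CJ)\to C^*(\CB',\CL,\theta',\CI_\af')$ recalled above, with inverse $\rho$. Given a $*$-homomorphism $\psi$ on $C^*(\CB,\CL,\theta,\CI_\af;\CJ)$, put $\tilde\psi:=\psi\circ\rho$; then $\psi$ is injective if and only if $\tilde\psi$ is. By Lemma~\ref{R(L) iff G(L)}, $(\CB,\CL,\theta)$ satisfies Condition~(L) if and only if $(\CB',\CL,\theta')$ does, so for the implication (1)$\Rightarrow$(2) I may apply Theorem~\ref{thm:CKUT for GBDS} to $(\CB',\CL,\theta',\CI_\af')$: it gives that $\tilde\psi$ is injective if and only if $\tilde\psi(p_X)\neq 0$ for every $X\in\CB'\setminus\{\emptyset\}$. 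Hence (1)$\Rightarrow$(2) comes down to the equivalence ``$\tilde\psi(p_X)\neq 0$ for all nonzero $X\in\CB'$'' $\iff$ ``$\psi$ satisfies (a) and (b)''.

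For the forward direction of that equivalence one only specializes. Since $\rho(p_{(A,[A]_\CJ)})=p_A$ (the formula for $\rho$ with $C=D=\emptyset$) and $(A,[A]_\CJ)\neq\emptyset$ exactly when $A\neq\emptyset$, one gets (a); and since $\rho(p_{(\emptyset,[B]_\CJ)})=p_B-\sum_{\af\in\Delta_B}s_{\af,\theta_\af(B)}s_{\af,\theta_\af(B)}^*$ (the formula with $C=B$, $D=\emptyset$) while $(\emptyset,[B]_\CJ)$ is a nonzero element of $\CB'$ precisely when $B\in\CB_{reg}\setminus\CJ$, one gets (b).

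The reverse direction is the crux. Writing $q_E:=p_E-\sum_{\af\in\Delta_E}s_{\af,\theta_\af(E)}s_{\af,\theta_\af(E)}^*$ for $E\in\CB_{reg}$, I would first note that $A\setminus B,\ B\setminus A\in\CB_{reg}$ for every $(A,[B]_\CJ)\in\CB'$ (a short computation from $[A]_{\CB_{reg}}=[B]_{\CB_{reg}}$), choose $C=B\setminus A$ and $D=A\setminus B$ in the formula for $\rho$, and simplify using relation~(i) to obtain
\[
\rho(p_{(A,[B]_\CJ)})=p_{A\cap B}+\sum_{\af\in\Delta_{A\setminus B}}s_{\af,\theta_\af(A\setminus B)}s_{\af,\theta_\af(A\setminus B)}^*+q_{B\setminus A},
\]
a sum of three mutually orthogonal projections, each dominated by one of the pairwise orthogonal projections $p_{A\cap B}$, $p_{A\setminus B}$, $p_{B\setminus A}$. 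Now assume (a) and (b) and let $X=(A,[B]_\CJ)\neq\emptyset$. If $A\cap B\neq\emptyset$, then $\psi(p_{A\cap B})\neq 0$ by (a). If $A\cap B=\emptyset$ but $A\neq\emptyset$, then $A\in\CB_{reg}\setminus\{\emptyset\}$, so $\Delta_A$ is nonempty; picking $\af\in\Delta_A$ we have $\theta_\af(A)\neq\emptyset$, and from $s_{\af,\theta_\af(A)}^*s_{\af,\theta_\af(A)}=p_{\theta_\af(A)}$ (relation~(iii)) together with (a) it follows that $\psi(s_{\af,\theta_\af(A)}s_{\af,\theta_\af(A)}^*)\neq 0$. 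If $A=\emptyset$, then $B\in\CB_{reg}\setminus\CJ$ since $X\neq\emptyset$, and $\psi(q_B)\neq 0$ by (b). In each case one of the orthogonal summands of $\tilde\psi(p_X)=\psi(\rho(p_X))$ is nonzero, so $\tilde\psi(p_X)\neq 0$; thus $\tilde\psi$, hence $\psi$, is injective.

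Finally, (2)$\Rightarrow$(1) follows by contraposition: if $(\CB,\CL,\theta)$ fails Condition~(L), then so does $(\CB',\CL,\theta')$, and Theorem~\ref{thm:CKUT for GBDS} supplies a non-injective $*$-homomorphism $\tilde\psi$ on $C^*(\CB',\CL,\theta',\CI_\af')$ with $\tilde\psi(p_X)\neq 0$ for all nonzero $X\in\CB'$; then $\psi:=\tilde\psi\circ\phi$ is non-injective, yet $\psi(p_A)=\tilde\psi(p_{(A,[A]_\CJ)})\neq 0$ for $A\neq\emptyset$ and $\psi(q_B)=\tilde\psi(p_{(\emptyset,[B]_\CJ)})\neq 0$ for $B\in\CB_{reg}\setminus\CJ$, so $\psi$ satisfies (a) and (b), contradicting~(2). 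The main obstacle is the reverse direction of the equivalence treated in the second and third paragraphs, and within it the case $A\cap B=\emptyset$, where one cannot stay inside the Boolean algebra and must descend one edge using regularity of $A$; the orthogonal decomposition of $\rho(p_{(A,[B]_\CJ)})$ displayed above is exactly what makes that step transparent.
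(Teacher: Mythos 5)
Your proof is correct, and it follows the same overall skeleton as the paper: transport $\psi$ through the isomorphism with $C^*(\CB',\CL,\theta',\CI_\af')$, use Lemma~\ref{R(L) iff G(L)}, and invoke Theorem~\ref{thm:CKUT for GBDS}. The difference is in the core verification for (1)$\Rightarrow$(2). The paper applies criterion (3) of Theorem~\ref{thm:CKUT for GBDS} and checks nonvanishing of $\psi\circ\rho$ only on elements $s_{\af,(A,[A]_\CJ)}s_{\af,(A,[A]_\CJ)}^*$, an argument that explicitly uses only hypothesis (a) and, as written, does not address the elements $(A,[B]_\CJ)$ of $\CI'_\emptyset=\CB'$ with $[A]_\CJ\neq[B]_\CJ$ --- in particular $(\emptyset,[B]_\CJ)$ with $B\in\CB_\reg\setminus\CJ$, which is exactly where (b) must enter (and (b) is genuinely needed: when $\CJ\subsetneq\CB_\reg$ the canonical surjection onto $C^*(\CB,\CL,\theta,\CI_\af)$ satisfies (a) but not (b) and is not injective). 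You instead use criterion (2) and verify $\psi(\rho(p_{(A,[B]_\CJ)}))\neq0$ for every nonzero $(A,[B]_\CJ)$ via the decomposition
\[
\rho(p_{(A,[B]_\CJ)})=p_{A\cap B}+\sum_{\af\in\Delta_{A\setminus B}}s_{\af,\theta_\af(A\setminus B)}s_{\af,\theta_\af(A\setminus B)}^*+\Bigl(p_{B\setminus A}-\sum_{\af\in\Delta_{B\setminus A}}s_{\af,\theta_\af(B\setminus A)}s_{\af,\theta_\af(B\setminus A)}^*\Bigr),
\]
which is legitimate since $A\setminus B,\ B\setminus A\in\CB_\reg$ (so the sums are finite) and the three summands are mutually orthogonal projections; your three cases then use (a), (a) plus regularity of $A\setminus B$, and (b) respectively. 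Your (2)$\Rightarrow$(1) is the paper's argument run in contrapositive form, with $\phi(p_B-\sum_{\af\in\Delta_B}s_{\af,\theta_\af(B)}s_{\af,\theta_\af(B)}^*)=p_{(\emptyset,[B]_\CJ)}$ justified by your formula $\rho(p_{(\emptyset,[B]_\CJ)})$. What your route buys is a complete, transparent treatment at exactly the point where the paper's write-up is terse; what the paper's route buys is brevity.
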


\begin{proof}
(1)$\implies$(2): The ``only if'' statement is clear. We prove the ``if'' part. Let $\psi:C^*(\CB,\CL,\theta,\CI_\alpha;\CJ)\to C$ be a $*$-homomorphism such that $\psi(p_A)\ne 0$ for all $A\in\CB\setminus\{\emptyset\}$ and 
$\psi(p_B -\sum_{\af \in \Delta_B}s_{\af, \theta_\af(B)}s_{\af, \theta_\af(B)}^*) \neq 0$ for all $B \in \CB_{reg} \setminus \CJ$.
Let $\rho: C^*(\CB', \CL,\theta', \CI_\af') \to C^*(\CB, \CL,\theta,\CI_\af;\CJ) $ be the isomorphism given by $$\rho(p_{(A,[B]_\CJ)})=p_A+p_C-\sum_{\alpha\in\Delta_C}s_{\alpha,\theta_\alpha(C)}s_{\alpha,\theta_\alpha(C)}^*-p_D+\sum_{\alpha\in\Delta_D}s_{\alpha,\theta_\alpha(D)}s_{\alpha,\theta_\alpha(D)}^*,$$ where $C,D\in\CB_{reg}$ are such that $A\cup C=B\cup D$ and $A \cap C =B \cap D =\emptyset$, and $$\rho(s_{\af,(A,[A]_\CJ)})=s_{\af,A}$$
for all $(A,[B]_\CJ) \in \CB', \af \in \CL$ and $(A, [A]_\CJ) \in \CI_\af'$.
Then, $ \psi \circ \rho : C^*(\CB', \CL,\theta', \CI_\af') \to C$ is a $*$-homomorphism such that
$$ \psi \circ \rho (s_{\af,(A,[A]_\CJ)}s_{\af,(A,[A]_\CJ)}^*) 
=\psi(s_{\af,A}s_{\af,A}^*) \neq 0$$
for all $\af \in \CL^*$ and all $\emptyset \neq (A,[A]_\CJ) \in \widetilde{\CI}_\alpha$. In fact, if $ \psi \circ \rho (s_{\af,(A,[A]_\CJ)}s_{\af,(A,[A]_\CJ)}^*) 
=\psi(s_{\af,A}s_{\af,A}^*) =0$ for some  $\af \in \CL^*$ and some $\emptyset \neq (A,[A]_\CJ) \in \widetilde{\CI}_\alpha$, then 
$$\psi(p_A)=\psi(s_{\af,A}^*s_{\af,A}s_{\af,A}^*s_{\af,A})=\psi(s_{\af,A}^*)\psi(s_{\af,A}s_{\af,A}^*)\psi(s_{\af,A})=0$$ for $\emptyset \neq A \in \CI_\af$, a contradiction. 
Since $(\CB, \CL,\theta)$ satisfies Condition (L), $(\CB', \CL,\theta')$ satisfies Condition (L). Thus,  $\psi \circ \rho $ is injective by Theorem \ref{thm:CKUT for GBDS}. Hence, $\psi$ is injective.

 (2)$\implies$(1): Let $\phi:C^*(\CB, \CL,\theta,\CI_\af;\CJ) \to  C^*(\CB', \CL,\theta', \CI_\af')$ be the isomorphism such that $\phi(p_A)=p_{(A,[A]_\CJ)}$ and $\phi(s_{\af,B})=s_{(\af, (B, [B]_\CJ))}$ for $A \in \CB, \af \in \CL$ and $B \in \CI_\af$.
If $C$ is a $C^*$-algebra and $\rho: C^*(\CB', \CL,\theta', \CI_\af') \to C$ be a $*$-homomorphism such that $\rho(p_{(A,[B]_\CJ)}) \neq 0$ for each $\emptyset \neq (A,[B]_\CJ) \in \widetilde{\CB}$, then $\rho \circ \phi: C^*(\CB, \CL,\theta,\CI_\af;\CJ)  \to C$
is a $*$-homomorphism such that 
$$\rho \circ \phi(p_A)=\rho(p_{(A,[A]_\CJ)}) \neq 0$$ for all $\emptyset \neq A \in \CB$, and 
\begin{align*}&\rho \circ \phi(p_B -\sum_{\af \in \Delta_B}s_{\af, \theta_\af(B)}s_{\af, \theta_\af(B)}^*)\\
&=\rho\Big(p_{(B,[B]_\CJ)}-\sum_{\af \in \Delta_B}s_{(\af, (\theta_\af(B), [\theta_\af(B)]_\CJ))}s_{(\af, (\theta_\af(B), [\theta_\af(B)]_\CJ))}^*\Big) \\
&=\rho\Big(p_{(\emptyset,[B]_\CJ)} +p_{(B,\emptyset)} -\sum_{\af \in \Delta_{(B, \emptyset)}}s_{(\af, (\theta_\af(B), [\theta_\af(B)]_\CJ))}s_{(\af, (\theta_\af(B), [\theta_\af(B)]_\CJ))}^*\Big) \\
&=\rho(p_{(\emptyset,[B]_\CJ)}) \\
& \neq 0
\end{align*}
for all $ \emptyset \neq  B \in \CB_{reg} \setminus \CJ $. Thus  
$\rho \circ \phi$ is injective by our assumption. So, $\rho$ is injective, and hence, 
$(\CB', \CL,\theta')$ satisfies Condition (L) by Theorem \ref{thm:CKUT for GBDS}. Therefore,  $(\CB,\CL,\theta)$ satisfies Condition (L) by Lemma \ref{R(L) iff G(L)}.
\end{proof}

\section{Condition (K)}\label{Condition (K)}
Recall from \cite[Definition 5.1]{CaK1} that a Boolean dynamical system $(\CB,\CL,\theta)$ is said to satisfy Condition (K) if there is no pair $( (\beta,\eta), A)$ where $(\beta,\eta)$ is an ultrafilter cycle and $A\in\eta$ such that if $\gamma\in\CL^*\setminus\{\emptyset\}$, $B\in\CI_A$ and $\theta_\gamma(B)\in\eta$, then $B\in\eta$ and $\gamma=\beta^k$ for some $k\in\N$. 
We will now generalize and strengthen the characterization given in \cite[Theorem 6.3 and Theorem 8.1]{CaK1} of when a Boolean dynamical system satisfies Condition (K).  Recall from \cite[Remark 2.1]{PR2007} that a $C^*$-algebra $C$ is said to have {\em the ideal property} if whenever $I$ and $J$ are ideals in $C$ such that $I$ is not contained in $J$, there is a projection in $I \setminus J$; from \cite[Definition 8.1]{PP} that $C^*$-algebra $C$ is said to have {\em the weak ideal property} if whenever $I \subsetneq J$ are ideals in $ \mathcal{K} \otimes C$, where $\mathcal{K}$ denotes the $C^*$-algebra of compact operators on a separable infinite dimensional Hilbert space, then $J/I$ contains a nonzero projection, and from \cite{BP2} that a $C^*$-algebra $C$ is said to have {\em topological dimension zero} if the primitive ideal space of $C$ endowed with the hull-kernel topology has a basis of compact open sets. For $n\in\N$, we let $M_n(C(\T))$ denote the $C^*$-algebra of $n\times n$-matrices of continuous functions from $\T$ to $\C$.

\begin{theorem}\label{equiv:Condition (K)}
Let $(\CB,\CL,\theta,\CI_\alpha;\CJ)$ be a relative generalized Boolean dynamical system. Then the following are equivalent.
\begin{enumerate}
	\item $(\CB,\CL,\theta)$ satisfies Condition (K).
	\item $(\CB,\CL,\theta)$ has no cyclic maximal tails.
	\item If $\CH$ is a hereditary $\CJ$-saturated ideal of $\CB$, then $(\CB/\CH,\CL,\theta)$ satisfies Condition (L).
	\item Every ideal of $C^*(\CB,\CL,\theta,\CI_\alpha;\CJ)$ is gauge-invariant.
	\item $C^*(\CB,\CL,\theta,\CI_\alpha;\CJ)$ has the ideal property.
	\item $C^*(\CB,\CL,\theta,\CI_\alpha;\CJ)$ has the weak ideal property.
	\item The topological dimension of $C^*(\CB,\CL,\theta,\CI_\alpha;\CJ)$ is zero.
	\item $C^*(\CB,\CL,\theta,\CI_\alpha;\CJ)$ has no quotient containing a hereditary $C^*$-subalgebra that is isomorphic to $M_n(C(\T))$ for some $n\in\N$.
\end{enumerate}
\end{theorem}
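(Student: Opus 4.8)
The overall plan is to establish the circle of implications $(1)\Leftrightarrow(2)$ and $(2)\Rightarrow(3)\Rightarrow(4)\Rightarrow(5)\Rightarrow(6)\Rightarrow(7)\Rightarrow(8)\Rightarrow(2)$, broadly following the strategy of \cite[Theorems 6.3 and 8.1]{CaK1} but using Theorem~\ref{thm:CKUT for RGBDS} and Proposition~\ref{cyclic maximal tails} in place of the locally finite machinery. The equivalence $(1)\Leftrightarrow(2)$ and the implication $(2)\Rightarrow(3)$ are purely combinatorial. For $(1)\Leftrightarrow(2)$ I would unwind the definitions: a pair $((\beta,\eta),A)$ witnessing the failure of Condition (K) is precisely an ultrafilter cycle $(\beta,\eta)$ together with a set $A\in\eta$ as in the definition of a cyclic maximal tail, and the associated set $\CT:=\{B\in\CB:\theta_\gamma(B)\in\eta\text{ for some }\gamma\in\CL^*\}$ is checked to satisfy (T1)--(T6) using only that $\eta$ is a prime filter and that $(\beta,\eta)$ is an ultrafilter cycle (the verification of (T5) is where the ultrafilter cycle property is used). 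For $(2)\Rightarrow(3)$ I argue contrapositively: given a hereditary $\CJ$-saturated ideal $\CH$ for which $(\CB/\CH,\CL,\theta)$ has a cycle $(\beta,[A])$ with no exit, I first pass to a minimal set so that the sets $[\theta_{\beta_{1,i}}(A)]$ become pairwise disjoint (cf.\ \cite[Lemma 6.1]{CaK1}), then choose an ultrafilter $\xi$ of $\CB/\CH$ with $[A]\in\xi$ and put $\eta:=\{B\in\CB:[B]_\CH\in\xi\}$; one checks that $\eta\in\WCB$ and that $(\beta,\eta)$ is an ultrafilter cycle, and the ``no exit'' condition together with the disjointness shows that a representative of $[A]$ witnesses that $\{B\in\CB:\theta_\gamma(B)\in\eta\text{ for some }\gamma\}$ is a cyclic maximal tail, contradicting $(2)$.

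The heart of the proof is $(3)\Rightarrow(4)$, which is where the Cuntz--Krieger uniqueness theorem enters. Given an ideal $I$ of $C^*(\CB,\CL,\theta,\CI_\alpha;\CJ)$, I would form the pair $(\CH_I,\CS_I)$; since $I_{(\CH_I,\CS_I)}\subseteq I$ always, it suffices to prove that the induced ideal $\overline{I}:=I/I_{(\CH_I,\CS_I)}$ of $C^*(\CB,\CL,\theta,\CI_\alpha;\CJ)/I_{(\CH_I,\CS_I)}\cong C^*(\CB/\CH_I,\CL,\theta,[\CI_\alpha];[\CS_I])$ is zero. If it were not, the corresponding quotient $*$-homomorphism $\pi$ of $C^*(\CB/\CH_I,\CL,\theta,[\CI_\alpha];[\CS_I])$ would be non-injective; but $(\CB/\CH_I,\CL,\theta)$ satisfies Condition (L) by $(3)$, so Theorem~\ref{thm:CKUT for RGBDS} forces either $\pi(p_{[A]})=0$ for some $\emptyset\ne[A]\in\CB/\CH_I$, or $\pi\bigl(p_{[B]}-\sum_{\alpha\in\Delta_{[B]}}s_{\alpha,\theta_\alpha([B])}s_{\alpha,\theta_\alpha([B])}^*\bigr)=0$ for some $\emptyset\ne[B]\in(\CB/\CH_I)_{\reg}\setminus[\CS_I]$. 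Transporting these across the isomorphism, the first case forces $p_A\in I$, i.e.\ $A\in\CH_I$, hence $[A]=\emptyset$; the second forces $p_B-\sum_{\alpha\in\Delta_{[B]_{\CH_I}}}s_{\alpha,\theta_\alpha(B)}s_{\alpha,\theta_\alpha(B)}^*\in I$ for a representative $B\in\CB_{\CH_I}$, i.e.\ $B\in\CS_I$, hence $[B]\in[\CS_I]$ --- both contradictions. Therefore $\overline{I}=0$, that is, $I=I_{(\CH_I,\CS_I)}$ is gauge-invariant.

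For $(4)\Rightarrow(5)$, under $(4)$ every ideal has the form $I_{(\CH,\CS)}$, and this ideal is generated by the elements $q_A:=p_A-\sum_{\alpha\in\Delta_{[A]_\CH}}s_{\alpha,\theta_\alpha(A)}s_{\alpha,\theta_\alpha(A)}^*$ with $A\in\CS$; since $\Delta_{[A]_\CH}$ is finite (as $[A]_\CH$ is regular) and, by the defining relations, $\{s_{\alpha,\theta_\alpha(A)}s_{\alpha,\theta_\alpha(A)}^*:\alpha\in\Delta_{[A]_\CH}\}$ is a finite family of mutually orthogonal subprojections of $p_A$, each $q_A$ is a projection. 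Thus every ideal is generated by its projections, which readily yields the ideal property. The implications $(5)\Rightarrow(6)$ and $(6)\Rightarrow(7)$ are the general facts of Pasnicu and Phillips \cite{PP} that the ideal property implies the weak ideal property and that the weak ideal property implies topological dimension zero. For $(7)\Rightarrow(8)$ I would use that topological dimension zero passes to quotients and to hereditary $C^*$-subalgebras (a closed or open subspace of a space with a basis of compact open sets again has such a basis, via $U\mapsto U\cap Y$ for closed $Y$ and restriction for open $V$), whereas $M_n(C(\T))$ has primitive ideal space $\T$, which has no basis of compact open sets; hence no quotient of $C^*(\CB,\CL,\theta,\CI_\alpha;\CJ)$ can contain a hereditary subalgebra isomorphic to $M_n(C(\T))$. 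Finally $(8)\Rightarrow(2)$ is immediate from Proposition~\ref{cyclic maximal tails}: if $\CT$ is a cyclic maximal tail, then taking $\CS=\CB_{\CB\setminus\CT}$ in the gauge-invariant ideal correspondence exhibits $C^*(\CB/(\CB\setminus\CT),\CL,\theta,[\CI_\alpha])$ as a quotient of $C^*(\CB,\CL,\theta,\CI_\alpha;\CJ)$, and that quotient contains a corner --- hence a hereditary subalgebra --- isomorphic to $M_n(C(\T))$.

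I expect the combinatorial step $(2)\Rightarrow(3)$ to be the main obstacle: one has to lift an ultrafilter through the quotient $\CB/\CH$ and then extract, from the rigidity forced by ``no exit'' (regularity together with the singleton condition on the sets $\Delta$), exactly the maximality clause in the definition of a cyclic maximal tail. By contrast $(3)\Rightarrow(4)$ is a clean application of Theorem~\ref{thm:CKUT for RGBDS}, the step $(4)\Rightarrow(5)$ is a short computation with the defining relations, and the remaining implications are either general $C^*$-algebra theory \cite{PP} or follow directly from Proposition~\ref{cyclic maximal tails}.
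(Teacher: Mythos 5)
Your overall architecture matches the paper's for the operator-algebraic implications: $(3)\Rightarrow(4)$ via the pair $(\CH_I,\CS_I)$, the identification of $C^*(\CB,\CL,\theta,\CI_\alpha;\CJ)/I_{(\CH_I,\CS_I)}$ with $C^*(\CB/\CH_I,\CL,\theta,[\CI_\alpha];[\CS_I])$ and Theorem~\ref{thm:CKUT for RGBDS}; $(5)\Rightarrow(6)\Rightarrow(7)$ by Pasnicu--Phillips (the paper cites \cite{PP} and \cite{PP2} respectively); $(7)\Rightarrow(8)$ by permanence of topological dimension zero; and the return to the cyclic-maximal-tail condition via Proposition~\ref{cyclic maximal tails} exactly as in the paper's $(8)\Rightarrow(1)$ step. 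Your $(4)\Rightarrow(5)$ is in fact a little cleaner than the paper's: you observe that every gauge-invariant ideal $I_{(\CH,\CS)}$ is generated by the projections $p_A-\sum_{\alpha\in\Delta_{[A]_\CH}}s_{\alpha,\theta_\alpha(A)}s_{\alpha,\theta_\alpha(A)}^*$, $A\in\CS$, so $I\not\subseteq J$ immediately yields a projection in $I\setminus J$; the paper instead compares $(\CH_I,\CS_I)$ with $(\CH_J,\CS_J)$ case by case, which needs the hypothesis $\CH_I=\CH_J$ in the second case. Where you genuinely diverge is in the purely combinatorial equivalences: the paper disposes of $(1)\Rightarrow(2)$ by definition and outsources $(2)\Rightarrow(3)$ and $(3)\Rightarrow(1)$ to \cite[Propositions 4.8 and 4.5]{CaK1}, whereas you prove $(1)\Leftrightarrow(2)$ directly (your verification of (T1)--(T6) for $\CT=\{B:\theta_\gamma(B)\in\eta\text{ for some }\gamma\}$ is fine; the ultrafilter-cycle property is indeed exactly what (T5) needs) and close the loop as $(2)\Rightarrow(3)\Rightarrow\cdots\Rightarrow(8)\Rightarrow(2)$, which is logically complete without ever proving $(3)\Rightarrow(1)$.

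The one soft spot is inside your direct proof of $(2)\Rightarrow(3)$: the step ``first pass to a minimal set so that the sets $[\theta_{\beta_{1,i}}(A)]$ become pairwise disjoint (cf.\ Lemma 6.1 of \cite{CaK1})''. Shrinking $A$ alone cannot in general produce this disjointness: if the no-exit cycle word $\beta$ is a proper power of a shorter word (e.g.\ $\beta=\alpha\alpha$ with $\theta_\alpha(B)=B$ for every $B\subseteq A$), then $[A]\cap[\theta_{\beta_{1,1}}(A)]=[A]\neq\emptyset$ for every nonempty subset of $[A]$, and disjointness is only restored after replacing $\beta$ by a primitive sub-word; indeed, whenever $[A]\cap[\theta_{\beta_{1,j}}(A)]\neq\emptyset$ the no-exit condition forces $\beta_i=\beta_{i+j}$ (indices mod $n$), so $\beta$ is periodic and one must pass to the period. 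Moreover \cite[Lemma 6.1]{CaK1} is invoked in the paper only in the situation of Proposition~\ref{cyclic maximal tails}, where $[A]$ already carries the maximality clause of a cyclic maximal tail witness; in your direction you are trying to manufacture that clause, so you cannot quote the lemma as stated but must carry out the primitivity-plus-shrinking reduction (this is essentially the content of \cite[Proposition 4.8]{CaK1}, which the paper simply cites). Once that reduction is in place, the rest of your sketch --- pulling the ultrafilter back through $\CB\to\CB/\CH$, checking it is an ultrafilter cycle, and deducing $\gamma=\beta^k$ and $B\in\eta$ from no-exit plus disjointness --- does go through.
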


\begin{proof} (1)$\implies$(2) follows from the definition of a cyclic maximal tail.

$(2)\implies(3)$ follows \cite[Proposition 4.8]{CaK1}. 

$(3)\implies(1)$ follows \cite[Proposition 4.5]{CaK1} and Remark \ref{remark:maximal tail}. 

$(3)\implies (4)$: Suppose $I$ is an ideal of $C^*(\CB,\CL,\theta,\CI_\alpha;\CJ)$. Let 
\[
	\CH_I:=\{A\in\CB:p_A\in I\}
\] 
and 
\[
	\CS_I:=\Bigl\{A\in\CB_{\CH_I}:p_A-\sum_{\alpha\in\Delta{[A]_{\CH_I}}}s_{\alpha,\theta_\alpha(A)}s_{\alpha,\theta_\alpha(A)}^*\in I\Bigr\}
\] 
where $\CB_{\CH_I}:=\bigl\{A\in\CB:[A]_{\CH_I}\in (\CB/\CH_I)_\reg\bigr\}$. Then \cite[Lemma 7.2]{CaK2} says that $\CH_I$ is a hereditary $\CJ$-saturated ideal of $\CB$ and $\CS_I$ is an ideal of $\CB_{\CH_I}$ with $\CH_I\cup\CJ\subseteq\CS_I$. According to \cite[Proposition 7.3]{CaK2}, there is a surjective $*$-homomorphism 
\[
	\phi_I:C^*(\CB/\CH_I,\CL,\theta,[\CI_\alpha];[\CS_I])\to C^*(\CB,\CL,\theta,\CI_\alpha;\CJ)/I
\] 
such that $\phi_I(p_{[A]})=p_A+I$ for $A\in\CB$ and $\phi_I(s_{\alpha,[B]})=s_{\alpha,B}+I$ for $\alpha\in\CL$ and $B\in\CI_\alpha$, where $[\CI_\alpha]=\{[A]:A\in\CI_\alpha\}$ and $[\CS_I]=\{[A]:A\in\CS_I\}$, and $I$ is gauge-invariant if (and only if) $\phi_I$ is injective. Since $\phi_I(p_{[A]})=p_A+I=0$ if and only if $A\in\CH_I$ and 
\[
	\phi_I\Bigl(p_{[A]}-\sum_{\alpha\in\Delta_{[A]_{\CH_I}}}s_{\alpha,\theta_\alpha([A])}s_{\alpha,\theta_\alpha([A])}^*\Bigr)=p_A-\sum_{\alpha\in\Delta_{[A]_{\CH_I}}}s_{\alpha,\theta_\alpha(A)}s_{\alpha,\theta_\alpha(A)}^*+I=0
\] 
if and only if $A\in\CS_I$, it follows from Theorem~\ref{CKUT for RGBDS} that if $(\CB/\CH_I,\CL,\theta)$ satisfies Condition (L), then $\phi_I$ is injective. Thus, $I$ is gauge-invariant.

(4)$\implies$(5): Suppose that every ideal of $C^*(\CB,\CL,\theta,\CI_\alpha;\CJ)$ is gauge-invariant. Let $I$ and $J$ be ideals of $C^*(\CB, \CL,\theta, \CI_\alpha;\CJ)$ such that $I\not\subseteq J$. Since $I$ and $J$ are gauge-invariant, $I=I_{(\CH_I, \CS_I)}$ and $J=J_{(\CH_J,\CS_J)}$ for some hereditary $\CJ$-saturated ideals $\CH_I, \CH_J$ and ideals $\CS_I, \CS_J$ of $\CB $ by \cite[Proposition 7.3]{CaK2}. If $\mathcal{H}_I=\{A\in\CB:p_A\in I\}\not\subseteq \{A\in\CB:p_A\in J\}=\mathcal{H}_J$, then $I \setminus J$ contains a projection. If $\CH_I=\CH_J$, then it follows that 
\begin{align*}
\CS_I&=\Bigl\{A \in \CB_{\CH_I}: p_A -\sum_{\alpha \in \Delta_{[A]}}s_{\alpha,\theta_\alpha(A)}s_{\alpha,\theta_\alpha(A)}^* \in I\Bigr\}	 \\
&\not\subseteq  \Bigl\{A \in \CB_{\CH_I}: p_A -\sum_{\alpha \in \Delta_{[A]}}s_{\alpha,\theta_\alpha(A)}s_{\alpha,\theta_\alpha(A)}^* \in J \Bigr\} =\CS_J.
\end{align*}
Hence, $I \setminus J$ contains a projection. This shows that $C^*(\CB,\CL,\theta,\CI_\alpha;\CJ)$ has the ideal property.

(5)$\implies$(6) follows from \cite[Proposition 8.2]{PP}.

(6)$\implies$(7) follows from \cite[Theorem 2.8]{PP2}.

(7)$\implies$(8): Since the property of having topological dimension zero passes to quotients and hereditary subalgebras, a $C^*$-algebra with topological dimension zero can not have a quotient with a hereditary $C^*$-subalgebra that is isomorphic to $M_n(C(\T))$ for some $n\in\N\setminus\{0\}$.

(8)$\implies$(1): We prove $\lnot (1)\implies \lnot (8)$. Suppose that $(\CB,\CL,\theta)$ does not satisfy Condition (K). Then, by (2) and Proposition \ref{cyclic maximal tails},  there is a cyclic maximal tail $\CT$ in $(\CB,\CL,\theta)$ and a $B\in\CT$ such that $p_{[B]}C^*(\CB/(\CB\setminus\CT),\CL,\theta, [\CI_\af])p_{[B]}$ is isomorphic to $M_n(C(\mathbb{T}))$ for some $n\in\N$. 
Since $C^*(\CB/(\CB\setminus\CT),\CL,\theta,[\CI_\alpha])$ is a quotient of $C^*(\CB,\CL,\theta,\CI_\alpha;\CJ)$, we have that $C^*(\CB,\CL,\theta,\CI_\alpha;\CJ)$ has a quotient that contains a hereditary $C^*$-subalgebra that is isomorphic to $M_n(C(\T))$.
\end{proof}

A $C^*$-algebra $A$ has real rank zero if every self-adjoint element in the minimal unitization of $A$ can be approximated by invertible self-adjoint elements of the minimal unitization of $A$. 
The following is an easy consequence of Theorem \ref{equiv:Condition (K)}.

\begin{corollary}Let $(\CB,\CL,\theta, \CI_\af;\CJ)$ be a relative generalized Boolean dynamical system. If $C^*(\CB,\CL,\theta, \CI_\af;\CJ)$ is  purely infinite or has real rank zero, then $(\CB, \CL,\theta)$ satisfies Condition (K).
\end{corollary}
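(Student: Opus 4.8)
The plan is to obtain Condition~(K) from the list of equivalences in Theorem~\ref{equiv:Condition (K)}, specifically via the equivalence $(1)\Leftrightarrow(8)$: what I really want to show is that a $C^*$-algebra which is purely infinite or has real rank zero cannot have a quotient containing a hereditary $C^*$-subalgebra isomorphic to $M_n(C(\T))$.

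Arguing by contraposition, I would assume that $(\CB,\CL,\theta)$ does not satisfy Condition~(K). By Theorem~\ref{equiv:Condition (K)} (negation of~(8)), there are then a quotient $Q$ of $C^*(\CB,\CL,\theta,\CI_\af;\CJ)$ and a hereditary $C^*$-subalgebra $D$ of $Q$ with $D\cong M_n(C(\T))$ for some $n\in\N$.

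Now I would invoke standard permanence properties. If $C^*(\CB,\CL,\theta,\CI_\af;\CJ)$ were purely infinite in the sense of Kirchberg and R{\o}rdam, then, since pure infiniteness passes to quotients and to hereditary $C^*$-subalgebras, $D\cong M_n(C(\T))$ would be purely infinite; but $M_n(C(\T))$ is stably finite (it carries a faithful tracial state), so its unit is a finite projection, contradicting the fact that every nonzero projection in a purely infinite $C^*$-algebra is properly infinite. Likewise, real rank zero passes to quotients and to hereditary $C^*$-subalgebras, so if $C^*(\CB,\CL,\theta,\CI_\af;\CJ)$ had real rank zero, then $D\cong M_n(C(\T))$ would have real rank zero; but $\operatorname{RR}(M_n(C(\T)))=\operatorname{RR}(C(\T))=1$, a contradiction. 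In either case we reach a contradiction, so $(\CB,\CL,\theta)$ satisfies Condition~(K).

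I do not anticipate any real obstacle here; the only point requiring a little care is citing the right permanence statements (Kirchberg--R{\o}rdam for pure infiniteness, Brown--Pedersen for real rank zero) and recording why $M_n(C(\T))$ enjoys neither property.
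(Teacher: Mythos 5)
Your argument is correct, and it splits into two halves of different character relative to the paper. For pure infiniteness you argue exactly as the paper does: negate Condition (K), invoke the equivalence $(1)\Leftrightarrow(8)$ of Theorem~\ref{equiv:Condition (K)} to produce a quotient with a hereditary subalgebra isomorphic to $M_n(C(\T))$, and contradict the permanence of pure infiniteness under quotients and hereditary subalgebras (the paper cites \cite[Propositions 4.3 and 4.17]{KR} for precisely this) together with the finiteness of $M_n(C(\T))$. For real rank zero, however, the paper takes a different and shorter route: it quotes \cite[Theorem 2.6]{BP2} to the effect that real rank zero implies the ideal property, and then uses the equivalence $(1)\Leftrightarrow(5)$ of Theorem~\ref{equiv:Condition (K)}, with no mention of $M_n(C(\T))$ at all. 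Your alternative---real rank zero passes to quotients and hereditary subalgebras, and $M_n(C(\T))$ does not have real rank zero---is equally valid and has the virtue of treating both hypotheses uniformly through condition (8); the paper's version buys a one-line proof at the cost of routing the two hypotheses through two different items of the theorem. One small caution: the identity $\operatorname{RR}(M_n(C(X)))=\operatorname{RR}(C(X))$ is not true for general $X$ (real rank can drop in matrix amplifications), so rather than asserting $\operatorname{RR}(M_n(C(\T)))=\operatorname{RR}(C(\T))$ it is safer to note that $C(\T)$ is a corner of $M_n(C(\T))$, so real rank zero of the latter would force $\operatorname{RR}(C(\T))=0$, contradicting $\operatorname{RR}(C(\T))=1$; with that phrasing your argument is complete.
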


\begin{proof}
If  $C^*(\CB,\CL,\theta, \CI_\af; \CJ)$ is purely infinite, then 
$C^*(\CB,\CL,\theta,\CI_\alpha;\CJ)$ has no quotient containing a hereditary $C^*$-subalgebra that is isomorphic to $M_n(C(\T))$ for some $n\in\N$ since the property of being purely infinite passes to quotients and corners (see \cite[Propositions 4.3 and 4.17]{KR}). Thus, by Theorem~\ref{equiv:Condition (K)}, we have  $(\CB, \CL, \theta)$ satisfies Condition $(K)$.

If  $C^*(\CB,\CL,\theta, \CI_\af; \CJ)$ is of  real rank zero, then $C^*(\CB, \CL, \theta, \CI_\af;\CJ)$ has the ideal property by \cite[Theorem 2.6]{BP2}.  It then follows that $(\CB, \CL, \theta)$ satisfies Condition $(K)$  by  Theorem~\ref{equiv:Condition (K)}.
\end{proof}

\section{Minimality and simplicity}\label{Minimality and simplicity}
It follows from \cite[Theorem 7.4]{CaK2} that if the $C^*$-algebra of a relative generalized Boolean dynamical system $(\CB,\CL,\theta,\CI_\alpha;\CJ)$ is simple, then $\CJ=\CB_\reg$. We will in this section generalize \cite[Theorem 9.16]{COP} and characterize when the $C^*$-algebra of a generalized Boolean dynamical system $(\CB,\CL,\theta,\CI_\alpha)$ is simple (Corollary~\ref{simple}). But we begin with two leammas and a partly generalizing and strengthening \cite[Theorem 9.15]{COP}.

\subsection{Minimality}
If $\CI_1$ and $\CI_2$ are two ideals of a Boolean algebra $\CB$, then we denote by $\CI_1\oplus\CI_2$ the smallest ideal of $\CB$ that contains both $\CI_1$ and $\CI_2$. It is easy to see that
\[
	\CI_1\oplus\CI_2=\{A_1\cup A_2:A_1\in\CI_1,\ A_2\in\CI_2\}.
\]

\begin{lemma} \label{SH}
Let $(\CB,\CL,\theta)$ be a Boolean dynamical system and suppose $A\in\CB$. Then 
\[
	\CH(A):=\bigl\{B\in\CB:\text{there exists a finite subset }F\subseteq\CL^*\text{ such that }B\subseteq\bigcup_{\beta\in F}\theta_\beta(A)\bigr\}
\]
is the smallest hereditary ideal that contains $A$, and
\begin{align*}
	\CS(\CH(A))&:=\{B\in\CB:\text{there is an }n\in\N_0\text{ such that }\theta_\beta(B)\in\CH(A)\text{ for all }\beta\in\CL^n,\\
	&\qquad\text{and }\theta_\gamma(B)\in\CH(A)\oplus\CB_\reg\text{ for all }\gamma\in\CL^*\text{ with }|\gamma|<n\}
\end{align*}
is a saturated hereditary ideal that contains $A$.
\end{lemma}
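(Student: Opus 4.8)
The plan is to verify each asserted property directly from the definitions of hereditary and saturated ideals given in the Preliminaries. First I would treat $\CH(A)$. That it is an ideal is routine: if $B_1,B_2\in\CH(A)$ with witnessing finite sets $F_1,F_2$, then $F_1\cup F_2$ witnesses $B_1\cup B_2\in\CH(A)$, and lower-closedness is immediate since $B'\subseteq B\subseteq\bigcup_{\beta\in F}\theta_\beta(A)$. That it is hereditary uses $\theta_\af\circ\theta_\beta=\theta_{\beta\af}$: if $B\subseteq\bigcup_{\beta\in F}\theta_\beta(A)$ then, applying the Boolean homomorphism $\theta_\af$ and using that it commutes with finite unions, $\theta_\af(B)\subseteq\bigcup_{\beta\in F}\theta_{\beta\af}(A)$, so the finite set $\{\beta\af:\beta\in F\}$ witnesses $\theta_\af(B)\in\CH(A)$. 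Taking $F=\{\emptyset\}$ shows $A=\theta_\emptyset(A)\in\CH(A)$. For minimality, let $\CH'$ be any hereditary ideal with $A\in\CH'$; then $\theta_\beta(A)\in\CH'$ for every $\beta\in\CL^*$ by induction on $|\beta|$ using hereditariness, hence every finite union $\bigcup_{\beta\in F}\theta_\beta(A)\in\CH'$ (ideal), and hence every $B\subseteq\bigcup_{\beta\in F}\theta_\beta(A)$ lies in $\CH'$ (lower-closed), i.e.\ $\CH(A)\subseteq\CH'$.

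Next I would handle $\CS(\CH(A))$; write $\CH:=\CH(A)$ and $\CS:=\CS(\CH(A))$ for brevity. Clearly $A\in\CS$ with $n=0$ (the condition ``$\theta_\beta(B)\in\CH$ for all $\beta\in\CL^0$'' is just $B\in\CH$, and the $|\gamma|<0$ clause is vacuous). To see $\CS$ is an ideal: given $B_1,B_2\in\CS$ with integers $n_1,n_2$, put $n:=\max\{n_1,n_2\}$; for $|\beta|=n$ one has $\theta_\beta(B_i)\in\CH$ (apply the $n_i$-level condition to the length-$n_i$ prefix of $\beta$, getting membership in $\CH$, then apply hereditariness of $\CH$ to the remaining letters), so $\theta_\beta(B_1\cup B_2)=\theta_\beta(B_1)\cup\theta_\beta(B_2)\in\CH$; and for $|\gamma|<n$ one similarly gets $\theta_\gamma(B_i)\in\CH\oplus\CB_\reg$ (either directly if $|\gamma|<n_i$, or from $\theta_\gamma(B_i)\in\CH\subseteq\CH\oplus\CB_\reg$ if $n_i\le|\gamma|<n$, again using hereditariness of $\CH$ and the fact that $\CH\oplus\CB_\reg$ is an ideal containing $\CH$). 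Lower-closedness: if $B\in\CS$ with witness $n$ and $B'\subseteq B$, then $\theta_\gamma(B')\subseteq\theta_\gamma(B)$ for every $\gamma$, and membership in the ideals $\CH$ resp.\ $\CH\oplus\CB_\reg$ is inherited by subsets.

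Hereditariness of $\CS$: if $B\in\CS$ with witness $n$ and $\af\in\CL$, I claim $\theta_\af(B)\in\CS$ with witness $\max\{n-1,0\}$. Indeed for $\beta\in\CL^{n-1}$ (if $n\ge1$) we have $\theta_\beta(\theta_\af(B))=\theta_{\af\beta}(B)$ with $|\af\beta|=n$, so this lies in $\CH$; and for $|\gamma|<n-1$ we get $\theta_\gamma(\theta_\af(B))=\theta_{\af\gamma}(B)$ with $|\af\gamma|<n$, hence in $\CH\oplus\CB_\reg$. If $n=0$ then $B\in\CH$, so $\theta_\af(B)\in\CH\subseteq\CS$ since $\CH$ is hereditary. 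Finally, saturatedness of $\CS$: suppose $B\in\CB_\reg$ and $\theta_\af(B)\in\CS$ for every $\af\in\Delta_B$ (and note $\theta_\af(B)=\emptyset\in\CS$ for $\af\notin\Delta_B$); let $n_\af$ be a witness for $\theta_\af(B)$ and put $n:=1+\max_{\af\in\Delta_B}n_\af$ (finite since $B$ regular forces $\Delta_B$ finite). For $\beta=\beta_1\cdots\beta_n\in\CL^n$: if $\beta_1\notin\Delta_B$ then $\theta_\beta(B)=\emptyset\in\CH$; if $\beta_1\in\Delta_B$ then $\theta_\beta(B)=\theta_{\beta_{2,n}}(\theta_{\beta_1}(B))$ with $|\beta_{2,n}|=n-1\ge n_{\beta_1}$, which lies in $\CH$ by the witness for $\theta_{\beta_1}(B)$ together with hereditariness of $\CH$. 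For $\gamma$ with $|\gamma|<n$: if $\gamma=\emptyset$ then $\theta_\gamma(B)=B\in\CB_\reg\subseteq\CH\oplus\CB_\reg$; if $\gamma=\gamma_1\cdots\gamma_m$ with $1\le m<n$ and $\gamma_1\notin\Delta_B$ then $\theta_\gamma(B)=\emptyset$; otherwise $\gamma_1\in\Delta_B$ and $\theta_\gamma(B)=\theta_{\gamma_{2,m}}(\theta_{\gamma_1}(B))$ with $|\gamma_{2,m}|=m-1<n-1\le n_{\gamma_1}$ forces $\theta_\gamma(B)\in\CH$, while $|\gamma_{2,m}|\ge n_{\gamma_1}$ forces it into $\CH$ by hereditariness — in all cases $\theta_\gamma(B)\in\CH\oplus\CB_\reg$. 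Hence $B\in\CS$, so $\CS$ is saturated.

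The main obstacle is purely bookkeeping: keeping the index shifts consistent when one letter is peeled off the front of a word (so that lengths drop by one and the roles of the two clauses — exact membership in $\CH$ for long words, membership in $\CH\oplus\CB_\reg$ for short words — line up correctly), and remembering that $\CH$ hereditary lets one upgrade ``$\theta$ of a prefix lies in $\CH$'' to ``$\theta$ of the whole word lies in $\CH$''. No deep input is needed beyond the basic closure properties of ideals and the finiteness of $\Delta_B$ for regular $B$.
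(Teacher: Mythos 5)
Your proof is correct and is just a detailed write-out of the direct verification that the paper dismisses as ``straightforward to check'', so it follows essentially the same approach. The only (harmless) slips are in the saturation step: the chain $m-1<n-1\le n_{\gamma_1}$ should have $n-1\ge n_{\gamma_1}$, and in the sub-case $|\gamma_{2,m}|<n_{\gamma_1}$ the witness for $\theta_{\gamma_1}(B)$ only yields $\theta_\gamma(B)\in\CH(A)\oplus\CB_\reg$ rather than $\CH(A)$ --- which is exactly what is needed, so the argument stands.
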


\begin{proof}
It is straightforward to check that $\CH(A)$ is a hereditary ideal, and it is easy to see that if $\CH$ is a hereditary ideal and $A\in\CH$, then $\CH(A)\subseteq\CH$.

It is also straightforward to check that $\CS(\CH(A))$ is a saturated hereditary ideal. \end{proof}

For the proof of Lemma \ref{max tail}, the following notion of a partially defined topological graph will be useful.  

\begin{definition}(cf.\cite[Definition 4.6, 4.7]{Ka2006}) Let $E$ be a partially defined topological graph.
\begin{enumerate}
\item For $n \in \mathbb{N}\cup\{\infty\}$, a path $e \in E^n$ is called a {\it negative orbit} of $v \in E^0$ if $r(e)=v$ and $d(e) \in E^0_{sg}$ when $n < \infty$. 
\item For each negative orbit $e=(e_1, e_2, \cdots, e_n) \in E^n$ for $v \in E^0$, a {\it negative orbit space} $\operatorname{Orb}^-(v,e)$ is defined by 
$$\operatorname{Orb}^-(v,e)=\{v, d(e_1), d(e_2), \cdots, d(e_n)\} \subset E^0.$$
\end{enumerate} 
\end{definition}

\begin{lemma} \label{max tail}
Let $(\CB,\CL,\theta)$ be a Boolean dynamical system such that $\CB\ne\{\emptyset\}$. Then $(\CB,\CL,\theta)$ has a maximal tail.
\end{lemma}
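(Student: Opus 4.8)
The plan is to realize a maximal tail inside the partially defined topological graph attached to the generalized Boolean dynamical system $(\CB,\CL,\theta,\CR_\af)$ by taking the negative orbit space of a single vertex. Since $\CB\ne\{\emptyset\}$ there is a nonzero $A_0\in\CB$, and the principal filter $\{B\in\CB:A_0\subseteq B\}$ extends to an ultrafilter by Zorn's lemma, so $E^0=X_\emptyset=\widehat{\CB}\ne\emptyset$, where I write $E:=E_{(\CB,\CL,\theta,\CR_\af)}$. I would fix $v\in E^0$ and invoke (the partially defined analogue of) Katsura's fact that every vertex of a topological graph admits a negative orbit: there is an $n\in\N_0\cup\{\infty\}$ and $e=e_1\cdots e_n\in E^n$ with $r(e)=v$ and, when $n<\infty$, $d(e)\in E^0_{sg}$. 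Writing $e_k=e^{\af_k}_{\eta_k}$, $\zeta_0:=v$ and $\zeta_k:=d(e_k)$ for $1\le k\le n$, we have $\zeta_{k-1}=r(e_k)$ for $1\le k\le n$ (and $\zeta_k=r(e_{k+1})$ for $k<n$), $\orb^-(v,e)=\{\zeta_k:0\le k\le n\}$, and $\zeta_n\in E^0_{sg}$ when $n<\infty$. Since $\eta_k=\zeta_k\cap\CR_{\af_k}$ and $\theta_{\af_k}(A)\in\CR_{\af_k}$ for every $A\in\CB$, the definition $r(e^\af_\eta)=f_{\emptyset[\af]}(\eta)$ yields the key relation $A\in\zeta_{k-1}\iff\theta_{\af_k}(A)\in\zeta_k$ for all $A\in\CB$ and $1\le k\le n$.

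The candidate tail is $\CT:=\{A\in\CB:\theta_\gm(A)\in\xi\text{ for some }\gm\in\CL^*\text{ and }\xi\in\orb^-(v,e)\}$; taking $\gm=\emptyset$, $\xi=v$ shows $\CT$ contains every nonzero member of $v$, so $\CT\ne\emptyset$. Conditions (T1), (T3), (T4) then fall out of the facts that each $\xi\in\orb^-(v,e)$ is a prime, upper closed filter not containing $\emptyset$, applied respectively to $\theta_\gm(A)$, to $\theta_\gm(A\cup B)=\theta_\gm(A)\cup\theta_\gm(B)$, and to $\theta_\gm(A)\subseteq\theta_\gm(B)$. Condition (T2) is immediate because $\theta_\gm(\theta_\af(A))=\theta_{\af\gm}(A)$, so a witness for $\theta_\af(A)$ gives one for $A$.

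The two substantive conditions are (T5) and (T6). For (T6), given $A,B\in\CT$ with witnesses $\theta_{\gm_A}(A)\in\zeta_i$ and $\theta_{\gm_B}(B)\in\zeta_j$, say $i\le j$, I would iterate the key relation along $e_{i+1},\dots,e_j$ to obtain $\theta_{\af_{i+1}\cdots\af_j}(\theta_{\gm_A}(A))\in\zeta_j$ (this word is empty, hence the identity, when $i=j$), and then intersect with $\theta_{\gm_B}(B)$ inside the ultrafilter $\zeta_j$, giving $\theta_{\gm_A\af_{i+1}\cdots\af_j}(A)\cap\theta_{\gm_B}(B)\in\zeta_j\subseteq\CT$. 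For (T5), take $A\in\CT\cap\CB_\reg$ with witness $\theta_\gm(A)\in\xi$: if $\gm=\gm_1\cdots\gm_m$ with $m\ge1$, then $\theta_{\gm_2\cdots\gm_m}(\theta_{\gm_1}(A))=\theta_\gm(A)\in\xi$, so $\theta_{\gm_1}(A)\in\CT$; if $\gm=\emptyset$, then $A\in\xi$ with $A\in\CB_\reg\setminus\{\emptyset\}$, so $\xi\in Z(A)\subseteq E^0_{rg}$ by \cite[Lemma 7.9]{CasK1}, hence $\xi\ne\zeta_n$ and $\xi=\zeta_{k-1}$ for some $1\le k\le n$, and the key relation gives $\theta_{\af_k}(A)\in\zeta_k\in\orb^-(v,e)$, i.e.\ $\theta_{\af_k}(A)\in\CT$. (The subcase $\gm=\emptyset$ cannot arise when $n=0$, since it would force $v\in E^0_{rg}\cap E^0_{sg}$.)

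I expect the main obstacle to be twofold. First, one must pin down the existence of negative orbits for \emph{partially defined} topological graphs; the argument is the standard one (a regular vertex lies in $r(\operatorname{dom}(r))$, because $r$ restricted to a compact set $r^{-1}(V)$ has closed image, so otherwise $v$ would be a source), but it has to be carried out or cited carefully in this setting. Second, and more conceptually, one must resist the temptation to define $\CT$ as the bare union $\bigcup_{\xi\in\orb^-(v,e)}\xi$: that set does not satisfy (T2). Closing it up under the maps $A\mapsto\theta_\gm(A)$ is precisely what repairs (T2) while, as above, leaving (T1) and all the remaining axioms intact, and recognizing this is the real content of the proof.
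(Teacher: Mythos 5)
Your proposal is correct and follows essentially the same route as the paper: it passes to the partially defined topological graph $E_{(\CB,\CL,\theta,\CR_\af)}$, takes a negative orbit of a vertex, and defines $\CT$ as the sets pulled into some ultrafilter of the negative orbit space by some $\theta_\gm$, then checks (T1)--(T6) exactly as the paper does. If anything, your treatment of (T5) (splitting off $\gm=\emptyset$ and using $Z(A)\subseteq E^0_{rg}$ to rule out the terminal singular vertex) and your remark on the existence of negative orbits are more explicit than the paper's own argument.
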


\begin{proof}
Consider the partially defined topological graph $E:=E_{(\CB,\CL,\theta,\CR_\alpha)}$ constructed in Section~\ref{CKUT for GBDS}. Since $\CB\ne\{\emptyset\}$, we have that $E^0\ne\emptyset$. Choose $\chi \in E^0$. 
Let $e:=(e^{\af_n}_{\eta_n})_{n \geq 1}$ be a negative orbit of $\chi$.  We claim that 
\begin{align*}
	\CT&:=\{A\in\CB:\text{there exists }\beta\in\CL^*\text{ such that }\theta_\beta(A)\in\eta \text{ for some }\eta\in \operatorname{Orb}^-(\chi,e)\}
\end{align*}
is a maximal tail.  Clearly, we have $\emptyset \notin \CT$. We show that

(T2): Let $A \in \CB$ such that $\theta_\af(A) \in \CT$ for some $\af \in \CL$. Then, there is $\bt \in \CL^*$ such that $\theta_\bt( \theta_\af(A)) =\theta_{\af\bt}(A) \in \eta$ for some $\eta\in \operatorname{Orb}^-(\chi,e)$.  Thus, $A \in \CT$. 

(T3): Let $A\cup B  \in \CT$. Then   there is $\bt \in \CL^*$ such that $ \theta_\bt(A \cup B)=\theta_\bt(A) \cup \theta_\bt(B) \in \eta $ for some $\eta\in \operatorname{Orb}^-(\chi,e)$. Since $\eta$ is an ultrafilter,  either $\theta_\bt(A) \in \eta$ or $\theta_\bt(B) \in \eta$.  Hence, $A \in \CT$ or $B \in \CT$. 

(T4): Let $A \in \CT$ and $ B \in \CB$ with $A \subseteq B$. Then, there is $\bt \in \CL^*$ such that $\theta_\bt(A) \in \eta$ for some $\eta\in \operatorname{Orb}^-(\chi,e)$. Since 
$ \theta_\bt(A)  \subseteq \theta_\bt(B)$, $\theta_\bt(B) \in \eta$. Thus, $B \in \CT$. 

(T5): Let $A \in \CT$ be a regular set.  Then, there is $\bt \in \CL^*$ such that $\theta_\bt(A) \in \eta$ for some $\eta\in \operatorname{Orb}^-(\chi,e)$. If $\theta_\af(A) \notin \CT$ for all $\af \in \CL^* \setminus \{\emptyset\}$, then $\theta_\bt(\theta_\af(A)) \notin \eta$ for all $\eta \in \operatorname{Orb}^-(\chi,e)$ and all $\af, \bt \in \CL^*\setminus \{\emptyset\}$, a contradiction. Thus, $\theta_\af(A) \in \CT$ for some $\af \in \CL^* \setminus \{\emptyset\}$.

(T6):  Let $A, B \in \CT$. Then there exist $\bt, \bt' \in \CL^*$ such that $\theta_\bt(A) \in \eta$ and $\theta_{\bt'}(B) \in \eta'$ for some $\eta, \eta' \in \operatorname{Orb}^-(\chi,e)$. We may assume that $$\eta=r(e^{\af_i}_{\eta_i} \cdots e^{\af_j}_{\eta_j})(=f_{\emptyset[\af_{i,j}]}(g_{(\af_{i,j-1})\af_j}(\eta_j))) ~\text{and}~\eta'=d(e^{\af_i}_{\eta_i} \cdots e^{\af_j}_{\eta_j})(=h_{[\af_j]\emptyset}(\eta_j))$$ for some $1 \leq i,j \leq |e|$. Then, $\theta_{\bt\af_{i,j}}(A) \in \eta_{j} \cap \CI_{\af_{i,j}}$. Thus, $\theta_{\bt\af_{i,j}}(A) \cap \theta_{\bt'}(B) \in \eta'$, and hence, $\theta_{\bt\af_{i,j}}(A) \cap \theta_{\bt'}(B) \in \CT$.
\end{proof}

\begin{definition}
A Boolean dynamical system $(\CB,\CL,\theta)$ is \emph{minimal} if $\{\emptyset\}$ and $\CB$ are the only saturated hereditary ideals of $\CB$.
\end{definition}





\begin{proposition}\label{Minimality}
Let $(\CB,\CL,\theta,\CI_\alpha)$ be a generalized Boolean dynamical system. Then the following are equivalent. 
\begin{enumerate}
	\item $(\CB,\CL,\theta)$ is minimal.
	\item Either $\CB=\{\emptyset\}$ or $\CB\setminus\{\emptyset\}$ is the only maximal tail of $(\CB,\CL,\theta)$.
	\item If $A\in\CB\setminus\{\emptyset\}$, then $\CS(\CH(A))=\CB$.
	\item If $A,B\in\CB$, $x\in\CL^\infty$ and $A\ne\emptyset$, then there are a $C\in\CB_\reg$ such that $B\setminus C\in\CH(A)$, and an $n\in\N_0$ such that $\theta_{x_{1,n}}(B)\in\CH(A)$.
	\item If $A,B\in\CB$ and $A\ne\emptyset$, then there is a $C\in\CB_\reg$ such that $B\setminus C\in\CH(A)$ and such that there for every $x\in\CL^\infty$ is an $n\in\N_0$ such that $\theta_{x_{1,n}}(C)\in\CH(A)$.
	\item $\{0\}$ and $C^*(\CB,\CL,\theta,\CI_\alpha)$ are the only gauge-invariant ideals of $C^*(\CB,\CL,\theta,\CI_\alpha)$.
\end{enumerate}
\end{proposition}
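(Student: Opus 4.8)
The plan is to prove the implications in a cycle, or more precisely in a pattern that lets each characterization feed into the next cheaply. First I would prove $(1)\Longleftrightarrow(2)$: by Remark~\ref{remark:maximal tail}, if $\CT$ is a maximal tail then $\CB\setminus\CT$ is a hereditary $\CJ$-saturated ideal, and conversely a standard check shows that if $\CH$ is a proper hereditary saturated ideal then $\CB\setminus\CH$ satisfies (T1)--(T6), so minimality is exactly the statement that the only proper hereditary saturated ideal is $\{\emptyset\}$ — i.e.\ that $\CB\setminus\{\emptyset\}$ is the only maximal tail (using Lemma~\ref{max tail} to guarantee existence of at least one maximal tail when $\CB\ne\{\emptyset\}$, so that the dichotomy in (2) is not vacuous). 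Next, $(1)\Longleftrightarrow(3)$: for $A\ne\emptyset$, Lemma~\ref{SH} gives that $\CS(\CH(A))$ is a saturated hereditary ideal containing $A$, hence nonzero; if $(\CB,\CL,\theta)$ is minimal it must equal $\CB$, and conversely if every such $\CS(\CH(A))$ is all of $\CB$ then any nonzero saturated hereditary ideal $\CH$ contains some $A\ne\emptyset$, hence contains $\CH(A)$ and then (by saturatedness) contains $\CS(\CH(A))=\CB$, so $\CH=\CB$.

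The middle block $(3)\Longleftrightarrow(4)\Longleftrightarrow(5)$ is the combinatorial heart. For $(3)\Longrightarrow(4)$: given $B\in\CB$ and $A\ne\emptyset$, since $B\in\CS(\CH(A))$ there is an $n\in\N_0$ with $\theta_\beta(B)\in\CH(A)$ for all $\beta\in\CL^n$ and $\theta_\gamma(B)\in\CH(A)\oplus\CB_\reg$ for $|\gamma|<n$; unwinding the $n=0$ case gives $B\in\CH(A)\oplus\CB_\reg$, i.e.\ $B=B_0\cup C$ with $B_0\in\CH(A)$, $C\in\CB_\reg$, so $B\setminus C\subseteq B_0\in\CH(A)$, giving the desired $C$; and for any $x\in\CL^\infty$ the word $x_{1,n}\in\CL^n$ yields $\theta_{x_{1,n}}(B)\in\CH(A)$. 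For $(4)\Longrightarrow(5)$ one needs to upgrade the ``for each $x$ some $n$'' to the uniform-in-$x$ statement with the splitting done once; the idea is to apply (4) to get the decomposition $B=B_0\cup C$ with $C\in\CB_\reg$, then apply (4) again to $C$ (and to its images under finitely many $\theta$'s), using a König's-lemma / compactness argument on the tree of finite words $\CL^*$ to extract a single $n$ that works. For $(5)\Longrightarrow(3)$: given $A\ne\emptyset$ and $B\in\CB$, take the $C\in\CB_\reg$ from (5); then for each $x\in\CL^\infty$ there is $n(x)$ with $\theta_{x_{1,n(x)}}(C)\in\CH(A)$, and a compactness argument over $\CL^\infty$ (again a König's-lemma pruning of the subtree of ``bad'' finite words on which $\theta_\gamma(C)\notin\CH(A)$) produces a uniform $N$ such that $\theta_\beta(C)\in\CH(A)$ for all $\beta\in\CL^N$, while for $|\gamma|<N$ we have $\theta_\gamma(C)\in\CB_\reg\subseteq\CH(A)\oplus\CB_\reg$ (since $C\in\CB_\reg$ and $\CB_\reg$ is an ideal) and $\theta_\gamma(B)=\theta_\gamma(B_0)\cup\theta_\gamma(C)$ with $\theta_\gamma(B_0)\in\CH(A)$; combining, $B\in\CS(\CH(A))$, so $\CS(\CH(A))=\CB$.

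Finally $(1)\Longleftrightarrow(6)$: by the lattice isomorphism recalled in the Gauge-invariant ideals subsection (with $\CJ=\CB_\reg$, so $\CS$ is forced to equal $\CB_\CH$ and the pair $(\CH,\CS)$ is determined by $\CH$ alone, as there $\CH\cup\CB_\reg=\CB_\reg\subseteq\CS\subseteq\CB_\CH$ and in fact $\CB_\CH\supseteq\CB_\reg$, one checks the only ideal of $\CB_\CH$ containing $\CB_\reg$ relevant here collapses things so that gauge-invariant ideals of $C^*(\CB,\CL,\theta,\CI_\alpha)$ correspond bijectively to hereditary $\CB_\reg$-saturated $=$ saturated hereditary ideals $\CH$ of $\CB$), the poset of gauge-invariant ideals is order-isomorphic to the poset of saturated hereditary ideals of $\CB$; hence one side has exactly two elements iff the other does, which is precisely $(1)\Longleftrightarrow(6)$. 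The main obstacle I anticipate is the uniformity/compactness steps in $(4)\Longleftrightarrow(5)$ and $(5)\Longrightarrow(3)$: one must be careful that $\CL$ (hence $\CL^\infty$) is not assumed countable, so rather than a sequential König's lemma one should argue by contradiction — assume no uniform $N$ works, build an infinite path in $\CL^\infty$ along which $\theta_{x_{1,n}}(C)\notin\CH(A)$ for all $n$ (using that $\CH(A)$ is finitely generated as a hereditary ideal, by finite unions of $\theta_\beta(A)$, to make the ``bad set'' of words closed under taking prefixes and infinite along some branch), and contradict the per-$x$ statement; this finitary generation of $\CH(A)$ is exactly what makes the argument go through without countability assumptions.
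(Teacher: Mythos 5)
Your reduction of the middle block $(3)\Leftrightarrow(4)\Leftrightarrow(5)$ and your $(1)\Leftrightarrow(3)$ are essentially sound (modulo the small induction needed to see that $\CS(\CH(A))$ is contained in every saturated hereditary ideal containing $A$), but two of your steps have genuine gaps. First, your direction $(2)\Rightarrow(1)$ rests on the claim that the complement of a proper hereditary saturated ideal satisfies (T1)--(T6). Conditions (T1)--(T5) do follow, but (T6) does not: take $\CB$ to be the Boolean algebra of subsets of $\{1,2,3\}$, $\CL=\{\alpha\}$ and $\theta_\alpha\equiv\emptyset$; then $\CH=\{\emptyset,\{3\}\}$ is a proper nonzero hereditary saturated ideal, yet its complement contains $\{1\}$ and $\{2\}$ while $\theta_\beta(\{1\})\cap\theta_\gamma(\{2\})$ is $\emptyset$ for all $\beta,\gamma\in\CL^*$, so (T6) fails. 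The paper never uses complements of ideals to produce maximal tails; instead it proves $(2)\Rightarrow(3)$ by applying Lemma~\ref{max tail} to the quotient system $(\CB/\CS(\CH(A)),\CL,\theta)$ and pulling the resulting maximal tail back to $\CB$ (its preimage is a maximal tail that misses $A$, contradicting uniqueness), and it recovers $(2)\Rightarrow(1)$ only around the full cycle of implications. Without this, your scheme proves nothing from hypothesis $(2)$.

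Second, the compactness arguments you propose for $(4)\Rightarrow(5)$ and $(5)\Rightarrow(3)$ are respectively unnecessary and broken. For $(4)\Rightarrow(5)$ no uniformity in $n$ is required: in (5) the integer $n$ may still depend on $x$, only $C$ must be chosen once, and the paper's one-line trick is to replace $C$ by $B\cap C$, since then $\theta_{x_{1,n}}(B\cap C)\subseteq\theta_{x_{1,n}}(B)\in\CH(A)$ for the $n$ furnished by (4) for each $x$. For $(5)\Rightarrow(3)$ your argument fails at two points: (i) K\"onig's lemma does not apply, because $\CL$ may be infinite, so the prefix-closed tree of ``bad'' words is not finitely branching, and a well-founded infinitely branching tree need not have bounded height; your fallback via finite generation of $\CH(A)$ is not a proof, since the branching of the bad tree is governed by $\Delta_{\theta_\gamma(C)}$, not by $\CH(A)$; and (ii) the assertion that $\theta_\gamma(C)\in\CB_\reg$ ``since $C\in\CB_\reg$ and $\CB_\reg$ is an ideal'' is false: $\CB_\reg$ is closed under subsets and unions but is not invariant under the actions, so images of regular sets need not be regular, and this is exactly what you would need for the $|\gamma|<N$ clause of membership in $\CS(\CH(A))$. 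The paper sidesteps both problems by proving $\lnot(1)\Rightarrow\lnot(5)$: given a nontrivial saturated hereditary ideal $\CI$, $A\in\CI\setminus\{\emptyset\}$ and $B\notin\CI$, it builds the bad infinite word $x=\alpha_1\alpha_2\cdots$ letter by letter, using saturation of $\CI$ and regularity of successive sets $C_n\in\CB_\reg\setminus\CI$ to choose $\alpha_n$ with $\theta_{\alpha_n}(C_n)\notin\CI$, so that $\theta_{x_{1,n}}(C)\notin\CH(A)$ for every $n$ and every admissible $C$. Finally, your justification of $(1)\Leftrightarrow(6)$ overstates the classification: gauge-invariant ideals correspond to pairs $(\CH,\CS)$ with $\CS$ an ideal of $\CB_\CH$ containing $\CH$ and $\CB_\reg$, and $\CB_\CH$ can be strictly larger, so they are not in bijection with saturated hereditary ideals alone; the equivalence still holds (the relevant cases $\CH=\{\emptyset\}$ and $\CH=\CB$ force $\CS$), and the paper simply invokes \cite[Theorem 7.4]{CaK2}.
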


\begin{proof}
The equivalence of (1) and (6) follows from \cite[Theorem 7.4]{CaK2}. We will show that $(1)\implies (2)\implies (3)\implies (4)\implies (5)$ and $\lnot (1)\implies \lnot (5)$.

$(1)\implies (2)$: Suppose $(\CB,\CL,\theta)$ is minimal and that $\CB\ne\{\emptyset\}$. According to Lemma~\ref{max tail}, $(\CB,\CL,\theta)$ then has a maximal tail. Suppose $\CT$ is a maximal tail. Then $\CB\setminus\CT$ is a saturated hereditary ideal of $\CB$. Since $(\CB,\CL,\theta)$ is minimal, it follows that $\CB\setminus\CT=\{\emptyset\}$, and thus $\CT=\CB\setminus\{\emptyset\}$.

$(2)\implies (3)$: Suppose (2) holds and that $A\in\CB\setminus\{\emptyset\}$. Then $\CS(\CH(A))$ is a saturated hereditary ideal of $\CB$. Suppose $\CS(\CH(A))\ne\CB$. Then, we see that $\CB / \CS(\CH(A)) \neq\{[\emptyset]\}$. It then follows from Lemma~\ref{max tail} that the quotient Boolean dynamical system $(\CB/\CS(\CH(A)),\CL,\theta)$ has a maximal tail $\CT$. Then
\[
	\widetilde{\CT}:=\{B\in\CB:[B]_{\CS(\CH(A))}\in\CT\}
\]
is a maximal tail of $(\CB,\CL,\theta)$ and therefore equal to $\CB\setminus\{\emptyset\}$. But that cannot be the case since $[A]_{\CS(\CH(A))}=[\emptyset]$. Hence, we must have that $\CS(\CH(A))=\CB$.

$(3)\implies (4)$: Suppose (3) holds, that $A,B\in\CB$, $x\in\CL^\infty$ and $A\ne\emptyset$. Then $B\in\CS(\CH(A))$. It follows from the description of $\CS(\CH(A))$ givne in Lemma~\ref{SH} that there is an $n\in\N_0$ such that $\theta_\beta(B)\in\CH(A)$ for all $\beta\in\CL^n$, and $\theta_\gamma(B)\in\CH(A)\oplus\CB_\reg$ for all $\gamma\in\CL^*$ with $|\gamma|<n$. If $n=0$ and we let $C=\emptyset$, then $C\in\CB_\reg$, $B\setminus C=B\in\CH(A)$ and $\theta_{x_{1,0}}(B)=\emptyset\in\CH(A)$. If $n>0$, then $\theta_{x_{1,n}}(B)\in\CH(A)$ and there is a $C\in\CB_\reg$ such that $B\setminus C\in\CH(A)$. Thus, (4) holds.

$(4)\implies (5)$: Suppose (4) holds, that $A,B\in\CB$, $x\in\CL^\infty$ and $A\ne\emptyset$. Then there are a $C\in\CB_\reg$ such that $B\setminus C\in\CH(A)$, and an $n\in\N_0$ such that $\theta_{x_{1,n}}(B)\in\CH(A)$. We then have that $B\cap C\in\CB_\reg$, $B\setminus (B\cap C)=B\setminus C\in\CH(A)$. Moreover $\theta_{x_{1,n}}(B\cap C)\subseteq \theta_{x_{1,n}}(B)\in\CH(A)$, which implies that $\theta_{x_{1,n}}(B\cap C)\in\CH(A)$. Thus, (5) holds.

$\lnot(1) \implies \lnot (5)$: Suppose that $\CI$ is a saturated hereditary ideal different from $\{\emptyset\}$ and $\CB$. Choose $A\in\CI\setminus\{\emptyset\}$ and $B\in\CB\setminus\CI$. Since $\CH(A)\subseteq\CI$, we have that if there is a $B'\in\CB$ such that $B'\setminus C\notin\CI$ for any $C\in\CB_\reg$, then (5) does not hold. Suppose that  for every $B'\in\CB$, there is a $C\in\CB_\reg$ such that $B'\setminus C\in\CI$. Suppose $C_1\in\CB_\reg$ and $B\setminus C_1\in\CI$. Since $B\notin\CI$, it follows that $C_1\notin\CI$. Since $C_1\in\CB_\reg$, we deduce that there is an $\alpha_1\in\CL$ such that $\theta_{\alpha_1}(C_1)\notin\CI$. We can then choose $C\in\CB_\reg$ such that $\theta_{\alpha_1}(C_1)\setminus C\in\CI$. Let $C_2:=C\cap\theta_{\alpha_1}(C_1) (\neq \emptyset)$. Since $\theta_{\alpha_1}(C_1)\notin\CI$, it follows that $C_2\notin\CI$. Since $C_2\in\CB_\reg$, we deduce that there is an $\alpha_2\in\CL$ such that $\theta_{\alpha_2}(C_2)\notin\CI$. Continuing like this, we can construct a sequence $(C_n,\alpha_n)_{n\in\N}$ such that we for each $n\in\N$ have $C_n\in\CB_\reg\setminus\CI$, $\alpha_n\in\CL$, $C_{n+1}\subseteq \theta_{\alpha_n}(C_n)$ and $\theta_{\alpha_n}(C_n)\setminus C_{n+1}\in\CI$. Let $x=\alpha_1\alpha_2\cdots$ and suppose $n\in\N$. Then $C_{n+1}\subseteq\theta_{x_{1,n}}(C_1)$. Since $C_{n+1}\notin\CI$, and therefore $C_{n+1}\notin \CH(A)$ it follows that $\theta_{x_{1,n}}(C_1)\notin\CH(A)$. We thus have that (5) does not hold.
\end{proof}

\subsection{Simplicity}
We now state our main result of Section \hyperref[Minimality and simplicity]{5}. It is a generalization of \cite[Theorem 9.16]{COP},  
\cite[Theorem 3.6]{CW2020} and \cite[Theorem 4.7]{Kang}. 

\begin{theorem} \label{simple}
Let $(\CB,\CL,\theta,\CI_\alpha)$ be a generalized Boolean dynamical system. Then the following are equivalent.
\begin{enumerate}
	\item Either $\CB=\{\emptyset\}$, or $\CB\setminus\{\emptyset\}$ is the only maximal tail of $(\CB,\CL,\theta)$ and $\CB\setminus\{\emptyset\}$ is not cyclic.
	\item $(\CB,\CL,\theta)$ is minimal and satisfies Condition (L).
	\item $(\CB,\CL,\theta)$ is minimal and satisfies Condition (K).
	\item $C^*(\CB,\CL,\theta,\CI_\alpha)$ is simple.
\end{enumerate}
\end{theorem}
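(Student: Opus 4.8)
The plan is to derive all four equivalences from the machinery already assembled, chiefly Proposition~\ref{Minimality} and Theorem~\ref{equiv:Condition (K)}, regarding the generalized Boolean dynamical system $(\CB,\CL,\theta,\CI_\alpha)$ as the relative system $(\CB,\CL,\theta,\CI_\alpha;\CB_\reg)$ so that every cited result about relative systems applies with $\CJ=\CB_\reg$; note that for this choice of $\CJ$ a hereditary $\CJ$-saturated ideal is precisely a hereditary saturated ideal. I would first dispose of the degenerate case $\CB=\{\emptyset\}$, where $C^*(\CB,\CL,\theta,\CI_\alpha)=\{0\}$, there are no ultrafilters and no maximal tails, and all four conditions hold trivially, and then establish the three equivalences $(1)\Leftrightarrow(3)$, $(2)\Leftrightarrow(3)$ and $(3)\Leftrightarrow(4)$ under the standing assumption $\CB\ne\{\emptyset\}$.

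\emph{$(3)\Leftrightarrow(4)$.} If $(\CB,\CL,\theta)$ is minimal and satisfies Condition~(K), then Theorem~\ref{equiv:Condition (K)} gives that every ideal of $C^*(\CB,\CL,\theta,\CI_\alpha)$ is gauge-invariant, while the equivalence of (1) and (6) in Proposition~\ref{Minimality} gives that the only gauge-invariant ideals are $\{0\}$ and the whole algebra; hence it is simple. Conversely, if $C^*(\CB,\CL,\theta,\CI_\alpha)$ is simple, then its two ideals $\{0\}$ and itself are both gauge-invariant, so \emph{every} ideal is gauge-invariant, giving Condition~(K) by Theorem~\ref{equiv:Condition (K)}; and since these are then also the only gauge-invariant ideals, minimality follows from the equivalence of (1) and (6) in Proposition~\ref{Minimality}.

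\emph{$(2)\Leftrightarrow(3)$.} Condition~(K) always implies Condition~(L): apply the implication $(1)\Rightarrow(3)$ of Theorem~\ref{equiv:Condition (K)} with $\CH=\{\emptyset\}$, noting $\CB/\{\emptyset\}=\CB$. Conversely, assume $(\CB,\CL,\theta)$ is minimal and satisfies Condition~(L). Then the only hereditary saturated ideals of $\CB$ are $\{\emptyset\}$ and $\CB$; the quotient by $\{\emptyset\}$ is $(\CB,\CL,\theta)$ itself, which satisfies Condition~(L) by hypothesis, and the quotient by $\CB$ is trivial, hence satisfies Condition~(L) vacuously. Thus condition (3) of Theorem~\ref{equiv:Condition (K)} holds, which yields Condition~(K).

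\emph{$(1)\Leftrightarrow(3)$.} By the equivalence of (1) and (2) in Proposition~\ref{Minimality}, minimality of $(\CB,\CL,\theta)$ is equivalent to ``$\CB\setminus\{\emptyset\}$ is the only maximal tail''; and since a maximal tail exists by Lemma~\ref{max tail}, under minimality $\CB\setminus\{\emptyset\}$ is the unique maximal tail, so the clause ``$\CB\setminus\{\emptyset\}$ is not cyclic'' appearing in (1) is exactly ``$(\CB,\CL,\theta)$ has no cyclic maximal tail'', which by the equivalence of (1) and (2) in Theorem~\ref{equiv:Condition (K)} is Condition~(K). Hence (1) is equivalent to ``$(\CB,\CL,\theta)$ is minimal and satisfies Condition~(K)'', which is (3). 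Essentially no new argument is needed here; the only points requiring care are the bookkeeping in the case $\CB=\{\emptyset\}$ and checking that the identification ``generalized $=$ relative with $\CJ=\CB_\reg$'' makes the saturation hypotheses in Proposition~\ref{Minimality} and Theorem~\ref{equiv:Condition (K)} line up. (As an alternative to passing through Condition~(K) in the step $(2)\Rightarrow(4)$, one may argue directly: given a nonzero ideal $I$, Theorem~\ref{thm:CKUT for GBDS} provides $A\in\CB\setminus\{\emptyset\}$ with $p_A\in I$, so $\CH_I=\{B\in\CB:p_B\in I\}$ is a nonzero hereditary saturated ideal, hence $\CH_I=\CB$ by minimality; then $p_B\in I$ and $s_{\alpha,B}=s_{\alpha,B}p_B\in I$ for all $\alpha$ and all $B$, so $I=C^*(\CB,\CL,\theta,\CI_\alpha)$.)
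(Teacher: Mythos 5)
Your proposal is correct, and for the equivalences $(1)\Leftrightarrow(3)$ and $(2)\Leftrightarrow(3)$ it is exactly the paper's (terse) argument with the details filled in: the paper also reduces these to Proposition~\ref{Minimality} and Theorem~\ref{equiv:Condition (K)}, and your observation that $\{\emptyset\}$ is hereditary and $\CB_\reg$-saturated (so taking $\CH=\{\emptyset\}$ gives (K)$\Rightarrow$(L)) and that with $\CJ=\CB_\reg$ the relative and absolute saturation notions coincide is precisely the bookkeeping the paper leaves implicit. Where you genuinely diverge is in how simplicity enters: you close the loop with $(3)\Leftrightarrow(4)$ purely through the ideal-lattice characterizations, namely Condition (K) $\iff$ every ideal is gauge-invariant (Theorem~\ref{equiv:Condition (K)}) and minimality $\iff$ the only gauge-invariant ideals are $\{0\}$ and the whole algebra (Proposition~\ref{Minimality}), so both directions are immediate once one notes that in a simple algebra the two ideals present are automatically gauge-invariant. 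The paper instead proves $(2)\Rightarrow(4)$ directly: a nonzero ideal $I$ contains some $p_A$ by the Cuntz--Krieger uniqueness theorem, $\CH_I$ is a nonzero hereditary saturated ideal, and minimality forces $\CH_I=\CB$, hence $I$ is everything; and it proves $(4)\Rightarrow(1)$ by invoking Proposition~\ref{cyclic maximal tails} to exhibit a hereditary corner isomorphic to $M_n(C(\T))$ inside a simple algebra, a contradiction. The two routes rest on the same underlying machinery (the uniqueness theorem and the cyclic-maximal-tail construction are buried inside the proof of Theorem~\ref{equiv:Condition (K)}), but yours is more modular, while the paper's gives a self-contained ideal argument for $(2)\Rightarrow(4)$ and a concrete obstruction for the converse; your parenthetical alternative at the end essentially reproduces the paper's direct $(2)\Rightarrow(4)$ argument, including the correct justification $s_{\alpha,B}=s_{\alpha,B}p_B\in I$. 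Your handling of the degenerate case $\CB=\{\emptyset\}$ (with the zero algebra counted as simple) matches the convention the paper implicitly adopts.
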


\begin{proof}
The equivalence of (1) and (3) follows from Theorem~\ref{equiv:Condition (K)} and Proposition~\ref{Minimality}, the equivalence of (2) and (3) follows from Theorem~\ref{equiv:Condition (K)}. 

(2)$\implies$(4): Let $I$ be a nonzero ideal of $C^*(\CB,\CL,\theta, \CI_\af)$. Since $(\CB,\CL,\theta)$  satisfies Condition (L), $I$ contains $p_A$ for some $A \in \CB \setminus \{\emptyset\}$ by the Cuntz--Krieger uniqueness theorem \ref{thm:CKUT for GBDS}. Then, $\CH_I=\{A \in \CB: p_A \in I \}$ is a nonempty saturated hereditary ideal of $\CB$ by \cite[Lemma 7.2(1)]{CaK1}. Since $(\CB,\CL,\theta)$ is minimal, $\CH_I=\CB$. Thus, $I=C^*(\CB,\CL,\theta,\CI_\af)$.

(4)$\implies$(1): Suppose that $C^*(\CB,\CL,\theta,\CI_\alpha)$ is simple. Then, by Proposition \ref{Minimality}, either $\CB=\{\emptyset\}$ or $\CB\setminus\{\emptyset\}$ is the only maximal tail of $(\CB,\CL,\theta)$.
Suppose that  $\CT:=\CB\setminus\{\emptyset\}$ is a cyclic maximal tail.
Then, by Proposition \ref{cyclic maximal tails}, there is a $B\in\CT$ such that $p_{B}C^*(\CB,\CL,\theta, \CI_\af)p_{B}$ is isomorphic to $M_n(C(\mathbb{T}))$ for some $n\in\mathbb{N}$.  
This contradicts to the fact that $C^*(\CB,\CL,\theta,\CI_\alpha)$ is simple. Thus, $\CT=\CB\setminus\{\emptyset\}$ is not  cyclic.
\end{proof}

\end{document}